\documentclass[12pt]{amsart}

\usepackage{psfrag}
\usepackage{color}

\usepackage{graphicx,graphics}
\usepackage{fullpage,amssymb,amsfonts,amsmath,amstext,amsthm,amscd,enumerate,verbatim,tikz,booktabs,subcaption,orcidlink}
\usepackage[T1]{fontenc}
\usetikzlibrary{matrix,arrows}

\begin{document}
\date{\today}

\newtheorem{theorem}{Theorem}[section]
\newtheorem{result}[theorem]{Result}
\newtheorem{fact}[theorem]{Fact}
\newtheorem{conjecture}[theorem]{Conjecture}
\newtheorem{definition}[theorem]{Definition}
\newtheorem{lemma}[theorem]{Lemma}
\newtheorem{proposition}[theorem]{Proposition}
\newtheorem{remark}[theorem]{Remark}
\newtheorem{corollary}[theorem]{Corollary}
\newtheorem{facts}[theorem]{Facts}
\newtheorem{question}[theorem]{Question}
\newtheorem{props}[theorem]{Properties}

\newtheorem{ex}[theorem]{Example}

\newcommand{\notes} {\noindent \textbf{Notes.  }}
\newcommand{\note} {\noindent \textbf{Note.  }}
\newcommand{\defn} {\noindent \textbf{Definition.  }}
\newcommand{\defns} {\noindent \textbf{Definitions.  }}
\newcommand{\x}{{\bf x}}
\newcommand{\z}{{\bf z}}
\newcommand{\B}{{\bf b}}
\newcommand{\V}{{\bf v}}
\newcommand{\T}{\mathcal{T}}
\newcommand{\Z}{\mathbb{Z}}
\newcommand{\Hp}{\mathbb{H}}
\newcommand{\D}{\mathbb{D}}
\newcommand{\R}{\mathbb{R}}
\newcommand{\N}{\mathbb{N}}
\renewcommand{\B}{\mathbb{B}}
\newcommand{\C}{\mathbb{C}}
\newcommand{\dt}{{\mathrm{det }\;}}
 \newcommand{\adj}{{\mathrm{adj}\;}}
 \newcommand{\0}{{\bf O}}
 \newcommand{\av}{\arrowvert}
 \newcommand{\zbar}{\overline{z}}
 \newcommand{\htt}{\widetilde{h}}
\newcommand{\ty}{\mathcal{T}}
\renewcommand\Re{\operatorname{Re}}
\renewcommand\Im{\operatorname{Im}}
\newcommand{\diam}{\operatorname{diam}}
\newcommand{\sign}{\operatorname{sign}}
\newcommand{\ft}{\widetilde{f}}
\newcommand{\Rot}{\mathcal{R}}
\renewcommand{\S}{\mathbb{S}}
\newcommand{\dist}{\operatorname{dist}}

\newcommand{\ds}{\displaystyle}
\numberwithin{equation}{section}

\renewcommand{\theenumi}{(\roman{enumi})}
\renewcommand{\labelenumi}{\theenumi}

\title{Schwarz-Pick and Denjoy-Wolff for uniformly quasiregular maps}

\author{Alastair N. Fletcher\,\orcidlink{0000-0003-1942-6928}}
\email{afletcher@niu.edu}
\address{Department of Mathematical Sciences, Northern Illinois University, DeKalb, IL 60115-2888, USA}

\subjclass[2020]{Primary 30C65; Secondary 37F10, 30L10, 30F45}
\keywords{Uniformly quasiregular maps, Denjoy-Wolff theorem, Schwarz-Pick lemma, modulus metric, conformal structures, Gromov hyperbolicity, Fatou components}

\begin{abstract}
We prove a Denjoy-Wolff theorem for uniformly quasiregular self-maps of the unit ball in $\mathbb{R}^n$, $n\geq 2$: either all orbits are relatively compact, or the iterates converge locally uniformly to a single point of the sphere. No properness, surjectivity, or boundary extension is assumed. The key tool is a Schwarz-Pick lemma valid in all dimensions: every quasiregular map preserving a bounded measurable conformal structure is non-expanding for a modulus metric built from that structure, and uniformly quasiregular maps preserve such a structure by a theorem of Iwaniec and Martin. Since this metric is not geodesic, we pass to an associated chain metric, which we show is Gromov hyperbolic with boundary the sphere, and apply a theorem of Karlsson. As an application, a stable Fatou component of a uniformly quasiregular map of $\overline{\mathbb{R}^n}$ that is a uniform domain with uniformly perfect boundary is an attracting basin, a rotation domain, or a parabolic basin with a unique boundary fixed point. Non-injective examples of both escaping types are constructed in every dimension.
\end{abstract}

\maketitle

\section{Introduction}

The Denjoy-Wolff Theorem states that if $f:\D \to \D$ is holomorphic, then either $f$ is an elliptic automorphism, or there exists a unique $w\in \overline{\D}$ such that $f^m \to w$ locally uniformly on $\D$. Generalizations to many other settings abound, for example \cite{Aba88,Bea97,Hei41,Her63,KKR,Kar01,LLNW,Mac83}. The starting point of every proof is the Schwarz-Pick Lemma: holomorphic self-maps of $\D$ are non-expanding with respect to the hyperbolic metric. For quasiregular maps there are versions of the Schwarz-Pick Lemma, but all of those the author is aware of control the distortion of distances rather than give non-expansion, and for a metric independent of the map nothing better is possible: by \cite[Proposition 7.1]{FM20}, uniformly quasiregular self-maps of $\B^n$ can expand the hyperbolic metric by any prescribed factor. Finding a metric, adapted to the map, for which a uniformly quasiregular map is non-expanding is one of the main aims of this paper.

Of relevance to this paper, the author and Macclure showed in \cite[Theorem 1.8]{FM20} that if $f:\B^3 \to \B^3$ is a surjective, proper uniformly quasiregular map, then either the iterates form a semi-group of automorphisms of $\B^3$, or there exists a unique $w\in \overline{\B^3}$ such that $f^m \to w$ locally uniformly on $\B^3$. The idea behind this result is that the hypotheses imply that $f$ extends to a uniformly quasiregular map of $\S^2$ to $\S^2$. If the result did not hold, then there would be a continuum of accumulation points of $f$ in $\S^2$, all fixed by $f$. This boundary map is nothing other than a quasiconformal conjugate of a rational map, and a rational map that is not the identity certainly cannot fix a non-trivial continuum of points. The paper \cite{FM20}, slightly paraphrased for clarity, notes that {\it ``this is the first version of the Denjoy-Wolff Theorem which does not directly use non-expansiveness with respect to a metric.''} The aim of this paper is to show that this somewhat naive view can be corrected.

To that end, we introduce a new metric based on measurable conformal structures. We give full definitions below, but for now, if $n\geq 2$ and $f:\B^n\to \B^n$ is uniformly quasiregular, then Iwaniec and Martin \cite{IM96} showed that there is a measurable conformal structure $G$ such that $f^*G = G$. One interpretation of this is that $f$ is conformal with respect to the structure $G$. 

For our first main result, the Schwarz-Pick Lemma shows that every holomorphic map $f:\D\to \D$ is either a strict hyperbolic contraction, or a hyperbolic isometry and a M\"obius automorphism of $\D$. For quasiregular maps in $\B^n$, there are versions of the Schwarz-Pick Lemma which bound how far points can be pushed apart in the hyperbolic metric in terms of the maximal dilatation and the dimension, see \cite[Chapter 16]{HKV}. Here we give a closer analogue of the Schwarz-Pick Lemma for quasiregular maps which are conformal with respect to a measurable conformal structure.

\begin{theorem}[Schwarz-Pick Lemma]
\label{thm:sp}
Let $n\geq 2$, let $U$ be a proper subdomain of $\R^n$ with $\operatorname{cap} \partial U >0$, let $G$ be a bounded measurable conformal structure on $U$, and let $f:U\to U$ be a non-constant quasiregular map with $f^*G = G$ almost everywhere. Then the invariant modulus metric $d_{G}$ of Definition \ref{def:Gmetric} satisfies
\begin{equation}
\label{eq:sp1} 
d_G (f(x) , f(y) ) \leq d_G(x,y) 
\end{equation}
for all $x,y\in U$. Moreover,
\begin{enumerate}[(a)]
\item if $f$ is a homeomorphism of $U$ onto itself, then $f$ is a $d_G$-isometry;
\item if $f$ is a $d_G$-isometry, then $f$ is injective;
\item if $f$ is not injective, then there are points $x\neq y$ with strict inequality in \eqref{eq:sp1}.
\end{enumerate}
\end{theorem}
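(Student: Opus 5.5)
We have not seen Definition \ref{def:Gmetric}, so we take it to be the natural analogue of the modulus metric: for $x,y\in U$, $d_G(x,y)$ is obtained from the infimum of the $G$-modulus $M_G(\Gamma)$ over the family $\Gamma$ of curves joining a continuum $C\ni x,y$ in $U$ to $\partial U$. Here the $G$-modulus uses admissible densities measured in the $G$-norm, $\int_\gamma \rho\,|\dot\gamma|_G\ge 1$, with volume $\int \rho^n \sqrt{\det G}$, and the metric is taken as a decreasing function (or the infimum itself, up to normalization) of this quantity. The condition $\operatorname{cap}\partial U>0$ makes these moduli positive, and boundedness of $G$ makes $M_G$ comparable to the Euclidean modulus; these facts give that $d_G$ is a genuine metric.

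The core step is a $G$-modulus inequality for $f$. Since $f^*G=G$ a.e., $f$ is $1$-quasiregular from $(U,G)$ to $(U,G)$: pointwise $|Df(x)v|_{G(f(x))}^n = J_f(x)\,\sqrt{\det G(x)}/\sqrt{\det G(f(x))}\cdot |v|^n_{G(x)}$ after normalizing $\det G=1$. We then redo Poletsky's/Väisälä's $K_O$-inequality in the $G$-geometry with $K=1$. This gives $M_G(f\Gamma)\le M_G(\Gamma)$ for any curve family $\Gamma$, because a pullback density $\rho\circ f\,|Df|_G$ is admissible for $\Gamma$ (using absolute continuity of $f$ on almost every curve) and integrates to at most the volume integral of $\rho^n$ times the multiplicity. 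For the continuum argument we need the reverse direction. Take a continuum $C$ containing $x,y$ and the family $\Gamma_C$ joining $C$ to $\partial U$. Then $f(C)$ is a continuum containing $f(x),f(y)$. Every curve joining $f(C)$ to $\partial U$ has, by path lifting for discrete open maps (Rickman), a maximal lift starting on $C$ that leaves every compact subset of $U$, because $f$ has no asymptotic behavior inside $U$. So $\Gamma_{f(C)}$ is minorized by $f\Gamma_C$, and Väisälä's inequality with $K_I=1$ gives $M_G(\Gamma_{f(C)})\le M_G(\Gamma_C)$. Taking the infimum over $C$ yields \eqref{eq:sp1}. The main obstacle is this step. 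Lifts of curves tending to $\partial U$ must themselves tend to $\partial U$; this can fail for non-proper $f$. We handle it by noting that a maximal lift either tends to $\partial U$ or accumulates at a point $z$ of $U$ with $f(z)$ on the image curve, and maximality rules out the latter. The direction of the inequality matters, so we pick the version of the metric for which the monotonicity of $M_G$ under $f$ goes the right way.

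For (a): if $f$ is a homeomorphism onto $U$, then $f^{-1}$ is quasiregular with $(f^{-1})^*G=G$. Applying \eqref{eq:sp1} to both $f$ and $f^{-1}$ gives equality. For (b): suppose $f(x)=f(y)$ with $x\ne y$. Then $d_G(f(x),f(y))=0<d_G(x,y)$, contradicting the isometry property. For (c): suppose $f$ is not injective, with $f(a)=f(b)$ and $a\ne b$. Then $d_G(f(a),f(b))=0<d_G(a,b)$, so strict inequality holds at $x=a$, $y=b$. Thus (b) and (c) are immediate once $d_G$ is known to be a metric, i.e.\ positive on distinct points, and that positivity comes from the capacity assumption and the boundedness of $G$.
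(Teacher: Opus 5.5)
\textbf{There is a gap in the key step.} Your architecture matches the paper's proof:
\begin{enumerate}
\item choose a near-extremal continuum $C\ni x,y$;
\item show that $\Delta(f(C),\partial U;U)$ is minorized by $f(\Delta(C,\partial U;U))$ using maximal lifts;
\item apply a $G$-version of the Poletsky--V\"ais\"al\"a inequality with constant $1$, and take the infimum.
\end{enumerate}
Parts (a)--(c) then go as you say. The gap is in the step that carries all the content, $M_G(f(\Gamma))\le M_G(\Gamma)$: the argument you give proves the opposite inequality. If $\rho'$ is $G$-admissible for $f(\Gamma)$, the pull-back $(\rho'\circ f)\,J_f^{1/n}$ is indeed admissible for $\Gamma$, on curves where $f$ is absolutely continuous. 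But the area formula then gives $M_G(\Gamma)\le\int(\rho'\circ f)^nJ_f\,dm\le N(f,U)\int(\rho')^n\,dm$, that is, $M_G(\Gamma)\le N(f,U)\,M_G(f(\Gamma))$. This is the $K_O$-inequality. It bounds the wrong modulus and carries the multiplicity factor, which non-injectivity makes large, so it cannot give \eqref{eq:sp1}. Citing ``V\"ais\"al\"a's inequality with $K_I=1$'' afterwards does not close the gap. That is a theorem about the Euclidean modulus, and for a merely measurable $G$ with $n\ge 3$ there is in general no quasiconformal change of coordinates turning $M_G$ into $M$ so that you could quote it. It has to be reproved for $M_G$, which is what Lemma \ref{lem:Gpol} does.

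\textbf{The missing idea is to push densities forward rather than pull them back.} Given $\eta$ $G$-admissible for $\Gamma$, set $\eta'(y)=\sup\{\eta(x)J_f(x)^{-1/n}: x\in f^{-1}(y)\setminus E\}$. Here $E$ is a null set containing the branch set, the points where $f$ is not differentiable or $J_f=0$, the points where $f^*G=G$ fails, and the null set where the chosen Borel representative of $G$ differs from $G$ together with its preimage under $f$. Then $(\eta')^n\le\sum_{x\in f^{-1}(y)\setminus E}\eta(x)^nJ_f(x)^{-1}$, and the area formula gives $\int(\eta')^n\,dm\le\int\eta^n\,dm$ with no multiplicity factor. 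The pointwise identity $|Df(x)v|_{G(f(x))}=J_f(x)^{1/n}|v|_{G(x)}$ then gives $\int_{f\circ\gamma}\eta'\,|d(f\circ\gamma)|_G\ge\int_\gamma\eta\,|d\gamma|_G\ge 1$. This holds only for curves outside an exceptional family that must be null \emph{in the image}. Controlling that family is the role of Poletsky's lemma: $f$ is absolutely precontinuous on $\gamma$ whenever $f\circ\gamma$ is locally rectifiable, except for a family $\Gamma_1$ with $M(f(\Gamma_1))=0$. Absolute continuity on $M$-almost every $\gamma$, which you invoke, controls the wrong family. You also need measurability of $\eta'$ (Lusin--Souslin on injectivity pieces), and condition (N), so that image curves meeting $f(E)$ in positive length form a null family.

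\textbf{Two minor points.}
\begin{enumerate}
\item Lifting does not fail for non-proper $f$. A lift defined on all of $[a,b)$ cannot accumulate at $z\in U$, since $f(z)$ would then lie in $\partial U$. This uses $f(U)\subset U$ rather than maximality. A lift that stops early tends to $\partial U$ by Rickman's lifting property.
\item In (a), transporting $f^*G=G$ to $(f^{-1})^*G=G$ a.e.\ requires a.e.\ differentiability and condition (N) for $f^{-1}$.
\end{enumerate}
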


In particular, Theorem \ref{thm:sp} applies to every uniformly quasiregular self-map $f:U \to U$, where $G$ is the Iwaniec-Martin invariant structure. The metric $d_{G}$ has to be calibrated to $f$. As noted above, no metric independent of $f$ can work. We emphasize that the hypothesis of Theorem \ref{thm:sp} concerns a single map preserving a single structure: it applies, with the same metric $d_G$, to every member of a uniformly quasiregular semigroup, since Iwaniec and Martin \cite{IM96} construct a structure invariant under the whole semigroup, and it applies to every quasiregular map preserving $G$, uniformly quasiregular or not.

Parts~(a)-(c) in Theorem \ref{thm:sp} are the analogue of the classical dichotomy for holomorphic maps in $\D$ that are either conformal automorphisms and hence isometries, or not an automorphism and hence strict contractions, but with two differences. First, we do not know if an isometric injective uniformly quasiconformal map $\B^n \to \B^n$ must be surjective. For $G\equiv I$, this follows from the fact that orientation-preserving isometries of the modulus metric $\mu_{\B^n}$ are conformal maps (Betsakos and Pouliasis \cite{BP19} for $n=2$ and Zhang \cite{Zha21} for $n\geq 3$) and hence M\"obius. Second,~(c) says the contraction is strict somewhere, rather than strict everywhere. We do not know the full answer to whether we can conclude strict contraction in the non-injective case.

It is worth pointing out that when $G \neq I$, the isometry group of $d_G$ can be wilder than the M\"obius group. Mayer \cite{May93} studied an earlier construction of Tukia that gave uniformly quasiconformal self-maps of $\mathbb{H}^3$ that are topologically but not quasiconformally conjugate to a translation. If $h$ is such a map, then by~(a) in Theorem \ref{thm:sp}, $h$ is a surjective isometry of its invariant modulus metric, and it is not the restriction of a M\"obius map.

For our Denjoy-Wolff Theorem, we will need to take a chain metric variant $d$ of $d_G$, essentially because for $n\geq 3$ we do not know if $(\B^n , d_G)$ is Gromov hyperbolic, whereas we do for $d$. 

\begin{theorem}[Denjoy-Wolff Theorem]
\label{thm:dw}
Let $f:\B^n \to \B^n$ be uniformly quasiregular and let $d$ be the chain metric of Definition \ref{def:chain}. Then $d$ is a proper metric inducing the Euclidean topology, $(\B^n , d)$ is Gromov hyperbolic with Gromov boundary $\S^{n-1}$, $f$ is $d$-non-expanding, and exactly one of the following holds.
\begin{enumerate}[(a)]
\item Every orbit is relatively compact in $\B^n$. Then either $f^m \to x_0 \in \B^n$ locally uniformly, where $x_0$ is the unique fixed point, attracting or super-attracting, or $f$ is a uniformly quasiconformal homeomorphism of $\B^n$ onto itself, a $d$-isometry, and some subsequence $f^{k_j} \to \operatorname{id}$.
\item There is $w\in \S^{n-1}$ with $f^m \to w$ locally uniformly on $\B^n$. Moreover, $|f^m(x) - f^m(y)| \to 0$ for all $x,y$. For each $x\in \B^n$, the step sizes $s_m = d(f^m(x) , f^{m+1}(x))$ are non-increasing with limit $s\geq 0$, and there is a horofunction $h$ of $(\B^n , d)$ with $h(x) = 0$ such that the orbit of every $y$ lies in $\{ h \leq d(x,y) \}$.
\end{enumerate}
\end{theorem}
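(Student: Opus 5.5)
The plan is to assemble the theorem from three ingredients: the Schwarz-Pick Lemma (Theorem \ref{thm:sp}), geometric properties of the chain metric $d$, and Karlsson's theorem on non-expanding self-maps of proper Gromov hyperbolic spaces. First, take $G$ to be the Iwaniec-Martin invariant structure, so that $f^*G=G$. Since $\partial\B^n=\S^{n-1}$ has positive capacity, Theorem \ref{thm:sp} gives $d_G(f(x),f(y))\le d_G(x,y)$. The chain metric $d$ of Definition \ref{def:chain} is an infimum over chains $x=x_0,\dots,x_k=y$ of sums of $d_G$ increments. The image of a chain is again a chain with no larger sum, so $f$ is $d$-non-expanding.

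Next, I establish the geometric properties of $d$. Since $G$ is bounded with $K$-bounded distortion, the modulus of any curve family for $G$ is comparable to the Euclidean one, with constants depending only on $K$ and $n$. Hence $d_G$ is bi-Lipschitz to the conformal modulus metric $\mu_{\B^n}$, and $d$ is bi-Lipschitz to the chain (inner) metric of $\mu_{\B^n}$. Known estimates compare $\mu_{\B^n}$ with the hyperbolic metric $\rho$: it is comparable to $\rho$ at large scales, and comparable to a power of $\rho$ at small scales. Using these, I show that $d$ is a length-type metric that is quasi-isometric to $(\B^n,\rho)$. It follows that $d$ induces the Euclidean topology, that closed balls are compact (so $d$ is proper, since $d(x,y)\to\infty$ as $y\to\S^{n-1}$), and, because Gromov hyperbolicity is a quasi-isometry invariant for geodesic or length spaces, that $(\B^n,d)$ is Gromov hyperbolic with boundary $\partial_G\B^n=\S^{n-1}$. \textbf{This is the main obstacle.} One has to check that the chain construction produces a genuine length space to which the quasi-isometry invariance applies, and that the small-scale non-linearity of $\mu_{\B^n}$ against $\rho$ does not spoil the coarse comparison. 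I would handle this by comparing $d$ only at scales $\ge 1$ and chaining at small scales.

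Then I split into cases. In case (a) every orbit is bounded. If $f$ is not injective, Theorem \ref{thm:sp}(c) together with normality of $\{f^m\}$ (from uniform $K$) lets me run a Calka-type argument on limit functions. Any limit $g$ of $f^{m_j}$ is non-constant or constant. If the limits are constant they must equal a fixed point $x_0$, which is then attracting or super-attracting. Otherwise one obtains a limit $d$-isometry on the limit set, and this forces injectivity of $f$ and surjectivity onto $\B^n$ via degree and openness, so that Theorem \ref{thm:sp}(a) applies. Then $f$ is a uniformly quasiconformal $d$-isometry, and an Arzel\`a-Ascoli argument in the compact isometry group stabilizer yields $f^{k_j}\to\mathrm{id}$. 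Uniqueness of $x_0$ in the first alternative follows since $f^m\to x_0$.

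In case (b) some orbit is unbounded. For non-expanding maps of proper spaces, Calka's theorem shows that then every orbit escapes, i.e.\ $d(x,f^m(x))\to\infty$. Karlsson's theorem (on proper Gromov hyperbolic spaces) gives a unique $w\in\partial_G=\S^{n-1}$ with $f^m(x)\to w$ for all $x$. Locally uniform convergence follows from non-expansion: $d(f^m(x),f^m(y))\le d(x,y)$, and points at bounded $d$-distance converging to the boundary have Euclidean distance tending to $0$ by the quasi-isometry with $\rho$. This also gives $|f^m(x)-f^m(y)|\to0$. For the remaining claims, monotonicity of the step sizes $s_m$ is immediate from non-expansion. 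For the horofunction, take a subsequential limit $h$ of $z\mapsto d(z,f^{m}(x))-d(x,f^m(x))$ along times where the orbit has record distance from $x$, as in Karlsson's construction; then $h(x)=0$. The horoball inclusion follows from $d(f^k(y),f^{m}(x))\le d(y,f^{m-k}(x))\le d(x,y)+d(x,f^{m-k}(x))$ together with the record-time choice.
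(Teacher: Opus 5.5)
\textbf{Main gap: the transfer of hyperbolicity.} The gap is at the step you flag as the main obstacle. You plan to show that $d$ is a length(-type) metric and then import hyperbolicity from $(\B^n,\rho)$ by quasi-isometry invariance for length spaces. But $d$ is not a length space, and no choice of scales makes it one. Indeed $d\geq \tfrac12 d_G\geq \tfrac12\lambda^{-n}\Phi\circ\rho$, and $\Phi(t)$ behaves like $(\log(1/t))^{1-n}$ as $t\to 0$, not like a power of $t$, so $\Phi(t)/t\to\infty$. Subdividing any non-constant path into $\rho$-steps of size $t$ then shows that its $d$-length is $+\infty$.

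So the check you propose fails, and quasi-isometry to $\rho$ alone transfers nothing; this is exactly why the paper cannot work with $d_G$ directly. The missing idea is to replace ``length'' by ``roughly geodesic''. A near-optimal $r$-chain, parametrised by the partial sums of $\psi(d_G(z_i,z_{i+1}))$, is an $h$-rough geodesic with $h=\psi(r)+1$ (Proposition~\ref{prop:roughgeos}). You then need quasi-isometry invariance for roughly geodesic spaces, which is Theorem~\ref{thm:gromovchain}:
\begin{enumerate}[(1)]
\item rough geodesics are discontinuous quasi-geodesics in $(\B^n,\rho)$, so the Morse lemma makes rough geodesic triangles in $(\B^n,d)$ slim;
\item slimness gives $d(w,\sigma)=(x|y)_w+O(1)$ for a rough geodesic $\sigma$ from $x$ to $y$, which yields the four-point condition;
\item the same estimate, compared with its $\rho$-analogue, shows that Gromov sequences and their equivalence coincide for $d$ and $\rho$.
\end{enumerate}
Step (3) is what identifies the boundary with $\S^{n-1}$ and lets Karlsson's argument give Euclidean convergence. ``Comparing at scales $\geq 1$ and chaining at small scales'' points in this direction but does not supply the argument. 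With this step in place, your case~(b) (Ca\l{}ka, record times, Gromov products, horofunction) is the paper's argument.

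\textbf{Secondary gap: case~(a).} This case is asserted rather than proved. ``A limit $d$-isometry on the limit set forces injectivity and surjectivity via degree and openness'' is not an argument, and Theorem~\ref{thm:sp}~(c) plays no role. The paper takes this dichotomy from \cite[Propositions 4.6 and 4.9]{HMM}, with \cite{Mar02} for $f^{k_j}\to\operatorname{id}$. A self-contained route is as follows.
\begin{enumerate}[(1)]
\item A constant subsequential limit is a fixed point $x_0$. Monotonicity of $m\mapsto d(f^m(z),x_0)$ then gives $f^m\to x_0$ locally uniformly, and uniqueness of $x_0$.
\item Otherwise, the $\omega$-limit set of $(f^m)$ is compact by Arzel\`a--Ascoli (bounded orbits, $d$-$1$-Lipschitz maps, proper space). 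It is a compact abelian semigroup under composition, so it contains an idempotent $e$ (Ellis--Numakura).
\item Since $e$ is non-constant, it is quasiregular and open, and $e=\operatorname{id}$ on $e(\B^n)$. This set is open and relatively closed in $\B^n$, so $e=\operatorname{id}$.
\item Hence $f^{k_j}\to\operatorname{id}$. This gives injectivity, and surjectivity by taking limits of $f^{k_j-1}(y)$. Theorem~\ref{thm:sp}~(a) and Lemma~\ref{lem:chainprop}~(d) then give the $d$-isometry.
\end{enumerate}
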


We call alternative~(a) the bounded case and~(b) the escaping case.
In the escaping case, we call it {\it positive-step} if $s>0$ and {\it zero-step} if $s=0$. Note that there are choices in deriving $d$ from $d_G$, but these choices do not affect the positive-step/zero-step classification, see the remark at the end of Section 6. We refer to, for example, \cite{Cow81,Pom79} for this classification in complex dynamics. Stated in the form in which Wolff's theorem is usually quoted, Theorem \ref{thm:dw} says the following.

\begin{corollary}
\label{cor:wolff}
Let $f:\B^n\to\B^n$ be uniformly quasiregular. If $f$ has no fixed point in $\B^n$ and is not a uniformly quasiconformal homeomorphism of $\B^n$ onto itself, then there is $w\in\S^{n-1}$ with $f^m\to w$ locally uniformly on $\B^n$.
\end{corollary}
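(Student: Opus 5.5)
The corollary should follow directly from the dichotomy in Theorem \ref{thm:dw}; no new argument is needed beyond ruling out the bounded case. Let $f:\B^n\to\B^n$ be uniformly quasiregular with no fixed point in $\B^n$, and suppose $f$ is not a uniformly quasiconformal homeomorphism of $\B^n$ onto itself. Theorem \ref{thm:dw} says that exactly one of alternatives (a) and (b) holds. It therefore suffices to show that (a) cannot occur; alternative (b) then gives the required $w\in\S^{n-1}$ with $f^m\to w$ locally uniformly on $\B^n$.

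Suppose for contradiction that we are in case (a), so every orbit is relatively compact in $\B^n$. Theorem \ref{thm:dw}(a) leaves two sub-cases.
\begin{enumerate}
\item The first is $f^m\to x_0\in\B^n$ locally uniformly, where $x_0$ is a fixed point of $f$. Since $f$ has no fixed point in $\B^n$, this is impossible. If one wants this independently of the wording of the theorem, note that $f^{m+1}(x_0)=f(f^m(x_0))$ together with continuity of $f$ at $x_0$ forces $f(x_0)=x_0$.
\item The second is that $f$ is a uniformly quasiconformal homeomorphism of $\B^n$ onto itself, a $d$-isometry, with some subsequence $f^{k_j}\to\operatorname{id}$. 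This is excluded directly by the hypothesis of the corollary.
\end{enumerate}
Both sub-cases are ruled out, so case (a) fails and case (b) holds.

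There is no real obstacle; the only care needed is to read the "exactly one" dichotomy precisely. In particular, case (a) must genuinely require either an interior fixed point or surjective homeomorphic behaviour. This is exactly what Theorem \ref{thm:dw}(a) states, so no separate Brouwer-type argument is required.
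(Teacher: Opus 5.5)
Your proposal is correct and matches the paper. The paper offers the corollary as an immediate restatement of Theorem \ref{thm:dw}: the hypotheses exclude both sub-cases of the bounded alternative (a), so the escaping alternative (b) holds.
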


For $n=2$, uniformly quasiregular self-maps of $\D$ are quasiconformally conjugate to holomorphic ones, and Theorems \ref{thm:sp} and \ref{thm:dw} reduce to the classical Schwarz-Pick Lemma and Denjoy-Wolff Theorem; see the remark at the end of Section 7.

This analogue of the Denjoy-Wolff Theorem has applications to stable Fatou components of uniformly quasiregular maps. Hinkkanen, Martin and Mayer \cite[Proposition 4.9]{HMM} showed that a stable Fatou component (see Section \ref{ss:uqr} for the terminology) is either an attracting basin, a parabolic basin, or a rotation domain, but note explicitly that ``we have not proved that a given $\Omega$ could be a parabolic basin on account of at most one $x_0\in\partial\Omega$'', that is, that the boundary fixed point which is a subsequential limit of the iterates might not be unique. Our next result addresses this point.

\begin{theorem}
\label{thm:stable}
Let $f:\S^n \to \S^n$ be a uniformly quasiregular map of degree at least $2$ and let $U$ be a stable component of $F(f)$ which is a uniform domain with uniformly perfect boundary (in the spherical metric). Then $U$ is either a (super-)attracting basin, a rotation domain, or a parabolic basin. In the last case, there is a unique $w \in \partial U$ which is a fixed point of $f$, with $f^m \to w$ locally uniformly on $U$.
\end{theorem}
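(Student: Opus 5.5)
The plan is to reduce Theorem \ref{thm:stable} to a Denjoy-Wolff statement on $U$ itself, with $U$ playing the role of $\B^n$ in Theorem \ref{thm:dw}.

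\textbf{Step 1: setting up the metric on $U$.} Since $U$ is a stable Fatou component, $f(U)\subset U$, so after a M\"obius change of coordinates we may assume $U\subset\R^n$ is bounded. The restriction $f|_U:U\to U$ is uniformly quasiregular. The Iwaniec-Martin structure $G$ of $f$ on $\S^n$ restricts to a bounded measurable conformal structure on $U$ with $f^*G=G$. Because $\partial U$ is uniformly perfect, $\operatorname{cap}\partial U>0$. Theorem \ref{thm:sp} then gives that $f$ is non-expanding for $d_G$ on $U$, and also for the associated chain metric $d$ of Definition \ref{def:chain}.

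\textbf{Step 2: Gromov hyperbolicity of $(U,d)$.} I will redo, on $U$, the part of the proof of Theorem \ref{thm:dw} that concerns the geometry of $d$. The target is: $d$ is proper, induces the Euclidean topology, and $(U,d)$ is Gromov hyperbolic with Gromov boundary homeomorphic to $\partial U$, with the identity extending to a homeomorphism of closures. The approach is comparison. Boundedness of $G$ makes $d_G$ bi-Lipschitz-type comparable to the modulus metric $\mu_U$. On a domain with uniformly perfect boundary, $\mu_U$ is comparable to the quasihyperbolic metric $k_U$ up to quasi-isometry. For a uniform domain, $k_U$ is Gromov hyperbolic, and its Gromov boundary is $\partial U$ with the Euclidean (here spherical) topology, by Bonk-Heinonen-Koskela. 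Gromov hyperbolicity is preserved under quasi-isometry between geodesic (or roughly geodesic) spaces. The chain construction of $d$ is exactly what makes $d$ a length-type metric quasi-isometric to $k_U$, so hyperbolicity and the boundary transfer. \emph{This step is the main obstacle.} In particular I must check that the chain metric is roughly geodesic in $U$, and that the quasi-isometry constants depend only on the uniformity and uniform perfectness constants, as in the ball case.

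\textbf{Step 3: Karlsson's theorem and the trichotomy.} With $f$ non-expanding on a proper Gromov hyperbolic space, Karlsson's theorem gives two possibilities.
\begin{enumerate}[(i)]
\item \emph{Bounded orbits.} The Denjoy-Wolff dichotomy of Theorem \ref{thm:dw}(a) carries over verbatim. Either $f^m$ converges to an interior fixed point, which means $U$ is a (super-)attracting basin. Or $f|_U$ is a uniformly quasiconformal self-homeomorphism with $f^{k_j}\to\operatorname{id}$, which means $U$ is a rotation domain.
\item \emph{Escaping orbits.} The orbits converge to a single point of the Gromov boundary, hence by Step 2 to a single $w\in\partial U$. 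Karlsson's argument with horofunctions gives that this holds for every orbit, and local uniformity follows from non-expansion together with properness.
\end{enumerate}

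\textbf{Step 4: properties of $w$ in the escaping case.} It remains to show that $w$ is fixed. Continuity of $f$ on $\S^n$ gives $f(w)=\lim f(f^m(x))=\lim f^{m+1}(x)=w$. Uniqueness is immediate, since the whole sequence $f^m$ converges to $w$, so the subsequential limit set is the single point $w$. Finally, this case is by definition the parabolic basin in the classification of \cite[Proposition 4.9]{HMM}, which yields the statement.
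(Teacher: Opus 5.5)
Your outline follows the paper's proof: restrict the Iwaniec--Martin structure to $U$, compare $d_G$ with $\mu_U$ and $\mu_U$ with a quasihyperbolic metric, rerun Theorem~\ref{thm:gromovchain} with that metric in place of $\rho$, repeat Section~7, and use \cite{HMM} in the bounded case and continuity of $f$ for $f(w)=w$.

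\textbf{The gap is the normalization in Step~1.} A M\"obius change of coordinates makes $U$ bounded only if $\S^n\setminus U$ has an interior point, since the point sent to $\infty$ must have a neighbourhood missing $U$. This fails in a case the theorem explicitly covers: $U=F(f)$ with $\partial U=J(f)$ a Cantor set, as in the parabolic examples of \cite[Theorem 5.2]{HMM}. There you can only arrange $U\subset\R^n$ with $\infty\in\partial U$. Two things you rely on then break:
\begin{enumerate}
\item the Bonk--Heinonen--Koskela identification of the Gromov boundary of $k_U$ with the Euclidean boundary of a \emph{bounded} uniform domain, which is what your Step~2 invokes;
\item the ball-case fact that balls of fixed radius near the boundary have small Euclidean diameter, which is implicitly behind Step~3.
\end{enumerate}

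\textbf{The repair, which is what the paper does,} is to work throughout with the spherical metric $\sigma$, for which $(U,\sigma)$ is always bounded, and with the spherical quasihyperbolic metric $\hat{k}_U$. This needs ingredients you have not supplied:
\begin{enumerate}
\item an upper bound $\mu_U\le \hat{b}_1\hat{k}_U+\hat{b}_2$ in spherical terms (Lemma~\ref{lem:muupper});
\item a lower bound $\mu_U\ge \hat{c}_1\hat{k}_U$ (Lemma~\ref{lem:mucomp}). It comes from Vuorinen's bound $\mu_U\ge a_1 j_U$ \cite{Vuo87}, which needs the thickness condition on $\S^n\setminus U$. So uniform perfectness must first be transferred from $\partial U$ to the complement (Lemma~\ref{lem:transfer}). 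It is then upgraded by the uniformity inequality $\hat{k}_U\le 4A\hat{\jmath}_U$. Uniform perfectness only yields the $j$-bound, so your Step~2 needs uniformity already at this point, not only at the BHK step;
\item the spherical BHK facts of Lemma~\ref{lem:unifgromov}: Gromov sequences of $\hat{k}_U$ converge in $\S^n$ to points of $\partial U$, equivalent ones to the same point, and $\hat{k}_U(x,y)\le M$ implies $\sigma(x,y)\le (e^M-1)\sigma(x,\partial U)$. The last estimate is what gives $\sigma(f^m(x),f^m(z))\to 0$ and the local uniformity in your Step~3; properness and non-expansion alone do not give it.
\end{enumerate}
You also need the chain radius $r\ge\max\{r_1,\lambda^n(\hat{b}_1+\hat{b}_2)\}$, so that steps of $\hat{k}_U$-length $1$ are admissible and the identity $(U,d)\to(U,\hat{k}_U)$ is a quasi-isometry. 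With these in place, your Steps~3 and~4 go through as written.
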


The hypotheses here include every quasiball, every uniform domain with connected boundary, every uniform domain whose boundary has finitely many non-degenerate components, and every uniform domain $U$ with $\partial U = J(f)$. The proof is quantitative: the constants in it depend only on $n$, the uniformity constant of $U$, and the uniform perfectness constant and spherical diameter of $\partial U$.

Next, Blaschke products provide a rich family of holomorphic self-maps of $\D$ with various different behaviours of the Denjoy-Wolff point. It is natural to ask whether examples illustrating Theorem \ref{thm:dw} exist. For any $n\geq 2$, the uniformly quasiregular power maps constructed by Mayer \cite{May97} provide examples of uniformly quasiregular self-maps of $\B^n$ where $f^m \to 0$ locally uniformly in $\B^n$. For the escaping case, we state the following result. Here we say that a M\"obius map $g$ is a {\it conformal model} for $f$ at the Denjoy-Wolff point $w$ if there is an open set $V\subset \B^n$ with $w\in \overline{V}$, $f(\B^n) \subset V$ and $f = g$ on $V$, in the ball model or in any M\"obius equivalent model of $\B^n$; then $f^m = g^{m-1} \circ f$ for all $m\geq 1$, so the dynamics of $f$ is that of $g$ after one step.

\begin{theorem}
\label{thm:examples}
For every $n\geq 2$, there exist non-injective uniformly quasiregular self-maps of $\B^n$ in the escaping case, of positive-step type, whose conformal model at the Denjoy-Wolff point $w$ is a hyperbolic M\"obius map, or a parabolic one.
\end{theorem}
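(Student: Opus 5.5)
The plan is to build the examples by conjugating to a half-space model and gluing a non-injective uniformly quasiregular piece to a Möbius map. We work in the upper half-space $\Hp^n=\{x_n>0\}$, which is Möbius equivalent to $\B^n$, with $w=\infty$.

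\textbf{Hyperbolic case.} Take $g(x)=\lambda x$ with $\lambda>1$. \textbf{Parabolic case.} Take $g(x)=x+e_1$, a horizontal translation.

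The goal is a map $f:\Hp^n\to\Hp^n$ and an open set $V\subset\Hp^n$ with three properties:
\begin{itemize}
\item[(1)] $\infty\in\overline V$ and $g(V)\subset V$;
\item[(2)] $f=g$ on $V$;
\item[(3)] $f(\Hp^n)\subset V$, and $f$ is quasiregular and non-injective.
\end{itemize}
Given these, $f^m=g^{m-1}\circ f$ for all $m\geq 1$. Every iterate is then a Möbius map composed with $f$, so its dilatation equals $K(f)$ and $f$ is uniformly quasiregular. The orbits escape to $\infty$ exactly as under $g$.

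\textbf{Choice of $V$.} For the hyperbolic case take the cone $V=\{x_n>|\hat x|\}$, writing $x=(\hat x,x_n)$; it is invariant under dilations. For the parabolic case take the slab $V=\{x_n>1\}$, which is invariant under translations.

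\textbf{Construction of $f$.} Since $f=g$ on $V$ and $g$ is injective there, non-injectivity has to come from $\Hp^n\setminus V$. It must be arranged so that $f(\Hp^n\setminus V)\subset V$, with $f$ still continuous and quasiregular across $\partial V$. The plan has three steps.
\begin{itemize}
\item[(i)] Pick a quasiconformal map $\phi$ of $\Hp^n$ onto a smaller subregion $W\subset V$, with $\phi=g$ on $V$. For the slab, for instance, use a piecewise-linear shift in the $x_n$ coordinate: $\phi(\hat x,t)=(\hat x+e_1,\psi(t))$, where $\psi$ is bi-Lipschitz from $(0,\infty)$ onto $(1+\epsilon,\infty)$ with $\psi(t)=t$ for $t\geq 2$, and $V$ is adjusted to $\{x_n>2\}$. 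The hyperbolic case is analogous, using a radial-angular adjustment in polar coordinates about $0$.
\item[(ii)] Post-compose in a compact region with a folding. Choose a small ball $B$ with $\overline B\subset W\setminus g(V)$ and replace $\phi$ on $\phi^{-1}(B)$ by $\phi$ followed by a branched map $B\to B$ equal to the identity near $\partial B$. A Zorich-type or winding map supplies such a map, for example the standard BLD map of a ball of degree $2$ that is the identity on a boundary collar.
\item[(iii)] Check that the region where $f\neq g$ is disjoint from $V$ while $f(\Hp^n)\subset V$. Its images then never return to that region, which is what justifies $f^m=g^{m-1}\circ f$.
\end{itemize}

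\textbf{Positive step.} In the hyperbolic case $d$ is comparable to the hyperbolic metric, and $g$ translates the hyperbolic geodesic along the $x_n$-axis by $\log\lambda$. In the parabolic case the orbit stays at height $x_n\approx\text{const}$ while moving horizontally by unit steps. The step $s_m$ is then governed by the $d$-distance between $x$ and $g(x)$ on the invariant region. Since $G$ is $g$-invariant there, these distances are constant along the orbit and bounded below by a positive constant by properness of $d$. So $s=\lim s_m>0$.

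\textbf{Main obstacle.} The hard step is making the local branched map in (ii) genuinely identity near $\partial B$ while having degree $\geq 2$. A degree-$2$ map of a ball that is the identity on the boundary cannot exist, since degree is determined by the boundary values. I would fix this by folding a neighbourhood onto a region that is allowed to cover part of its image twice. Concretely, let $\phi$ map a portion of the complement of $V$ non-injectively, by a winding map around an $(n-2)$-plane, into $W\setminus\overline{g(V)}$. The interface with $V$ would then be handled by a collar on which $f$ is a quasiconformal interpolation to $g$. This keeps $f$ a local homeomorphism near $\partial V$ and puts the branching deep inside $\Hp^n\setminus V$.
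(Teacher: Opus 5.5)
\textbf{The trap sets are inconsistent.} Your framework (1)--(3) is the paper's conformal trap, but your choices of $V$ cannot satisfy it. (I write $\theta$ for your dilation factor, since the paper uses $\lambda$ for $H^{1/2}$.)

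In both cases $g(V)=V$, so $g(\partial V\cap\Hp^n)=\partial V\cap\Hp^n$. Since $f=g$ on $V$, continuity gives $f=g$ on $\overline V\cap\Hp^n$. Then $f$ maps $\partial V\cap\Hp^n$ onto a set disjoint from the open set $V$, which contradicts $f(\Hp^n)\subset V$. What you need is a strict trap, $g(\overline V\cap\Hp^n)\subset V$.

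For the same reason step (i) is impossible as stated. $\phi$ is injective with $\phi(V)=V$, so $\phi(\Hp^n\setminus V)$ cannot lie in $V$. Your slab variant, with $\phi(\Hp^n)=\{x_n>1+\epsilon\}$ and $V=\{x_n>2\}$, simply violates (3). Since $\psi$ maps $(0,2)$ onto $(1+\epsilon,2)$, orbits starting below height $2$ never enter $V$ and pass through the non-conformal region at every step. The identity $f^m=g^{m-1}\circ f$, which was your only source of uniform quasiregularity, is then lost.

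For the dilation, the correct trap is the horoball $V=\{x_n>1\}$, which you assigned to the parabolic case. There $g(\overline V)=\{x_n\geq\theta\}\subset V$, and this is the paper's choice. For a height-preserving horizontal translation, every horoball at $\infty$ is exactly invariant. A strict trap must then look like $V=\{x_1>0\}$. Its complement contains all its backward translates, so it is unbounded in the translation direction and cannot be compressed affinely into a bounded slab, as the paper does with $\{0<x_n<1\}$.

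\textbf{The non-injective piece is not constructed.} Step (ii) is where the content of the theorem lies. You correctly note that a branched map of a ball equal to the identity near the boundary cannot exist, but your fix runs into the same obstruction. Suppose the winding region is enclosed, away from $\partial\Hp^n$, by a collar on whose outer side $f=g$. Then the degree of $f$ on the enclosed domain is determined by the boundary values of $g$, so it equals $1$, and $f$ is injective there.

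The missing idea is that the branched region must reach a part of the boundary where nothing is prescribed, namely $\partial\Hp^n$. It must also be periodic, so that the distortion stays bounded. The paper does this in Lemma \ref{lem:strip}:
\begin{itemize}
\item It starts from the strip map $w(z)=z-i\operatorname{Log}(1+a\beta(\Im z)e^{i(m-1)z})$. This map is the identity for $\Im z\geq h/2$, satisfies $\Im w>-a$, and has a $2\pi/(m-1)$-periodic row of simple critical points.
\item It precomposes $w$ with a radial map, so that these critical points become winding-type branch points of bounded length distortion. This step is needed because a holomorphic map with critical points times $\operatorname{id}_{\R^{n-2}}$ is not quasiregular.
\item Proposition \ref{prop:hypex} then sets $f=\theta\,\Omega\circ A$ on $\{0<x_n<1\}$, with $\Omega=W\times\operatorname{id}$ and $A$ affine, and $f=\theta x$ on $\{x_n\geq 1\}$.
\end{itemize}
The interface with the trap is the whole hyperplane $\{x_n=1\}$, near which $W=\operatorname{id}$. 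The bottom edge of the strip corresponds to $\partial\Hp^n$, where no boundary condition is imposed. Your instinct about winding around $(n-2)$-planes matches the branch set of this $f$. However, nothing in the proposal actually produces such a map.

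\textbf{The positive-step argument needs repair.} Invariance of $G$ under $g$ on $V$ does not make $g$ a $d$-isometry, because the path families defining $d_G$ leave $V$. Properness gives no lower bound on displacement either. What works is $d\geq c\rho$, with $c>0$, from Lemma \ref{lem:chainprop}(c), together with a uniform lower bound on $\rho(y,g(y))$. For the dilation this bound is $\log\theta$ at every $y$, not only on the axis. For a horizontal translation it is a positive constant along each orbit, since heights are preserved.

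\textbf{The parabolic case.} The paper builds no trap by hand here: Proposition \ref{prop:paraex} restricts the Hinkkanen--Martin--Mayer map to a ball $B'$. Be aware that its verification of $s>0$ does not hold up as written. By its own description, points of $\partial B\cap B'$ are sent into $B$, so $g=\Phi\circ R$ maps the horoball $B$ onto a proper sub-horoball. In the model $B'=\Hp^n$, $x_1=\infty$, it therefore includes a vertical translation by some $t>0$. Heights then grow linearly along orbits while horizontal displacements stay bounded, so $\rho(g^kx,g^{k+1}x)\to0$. Your choice of a height-preserving translation is what positive step actually requires. It leads directly to the unbounded-complement trap problem above, which your proposal does not solve.
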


The hyperbolic case is based on a conformal trap method as introduced by Iwaniec and Martin \cite{IM96}, and the parabolic case is modelled after a construction of Hinkkanen, Martin and Mayer \cite[Theorem 5.2]{HMM}.

Finally, the passage from $d_G$ to the chain metric $d$ is a device of independent interest in the coarse geometry of hyperbolic-type metrics. The modulus metric $\mu_U$ of a domain, like several other conformally invariant metrics, is defined by a point-pair function and is not a length metric, so Gromov hyperbolicity cannot be transported to it from the quasihyperbolic metric by quasi-isometry; Lind\'en \cite{Lin07} proved that $(\B^n,\mu_{\B^n})$ is Gromov hyperbolic by a direct computation and asked whether the same holds for general domains, and H\"ast\"o \cite{Has06} showed that for the closely related metric $j_U$ the answer is negative. The chain metric replaces $\mu_U$ by a roughly geodesic metric at bounded distance from it at large scales, and a modest extension of a theorem of V\"ais\"al\"a \cite[Theorem 3.18]{Vai05} from intrinsic to roughly geodesic spaces, Theorem \ref{thm:gromovchain}, then applies. In particular, we obtain the following partial answer to Lind\'en's question.

\begin{theorem}
\label{thm:coarse}
Let $U\subset\S^n$ be a uniform domain with uniformly perfect boundary and let $d$ be a chain metric built from $\mu_U$ as in Definition \ref{def:chain}, with $r$ sufficiently large. Then $(U,d)$ is Gromov hyperbolic, quasi-isometric to $(U,\mu_U)$ and to $U$ with its quasihyperbolic metric, and its Gromov boundary is $\partial U$.
\end{theorem}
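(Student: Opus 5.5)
The plan is to work entirely in the coarse category. Two inputs carry the argument: the classical Gromov hyperbolicity of the quasihyperbolic metric $k_U$ on a uniform domain, due to Bonk--Heinonen--Koskela, with Gromov boundary $\partial U$; and a comparison between the modulus metric $\mu_U$ and $k_U$ that holds when $\partial U$ is uniformly perfect.

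\textbf{Step 1: comparison of $\mu_U$ with $k_U$.} Because $\partial U$ is uniformly perfect, the standard estimates for the modulus metric (see \cite{HKV}) give
\[
\mu_U(x,y) \simeq j_U(x,y)
\]
up to multiplicative and additive constants at large scales. On a uniform domain, $j_U$ and $k_U$ are themselves comparable. Combining these, I expect the identity map $(U,\mu_U)\to(U,k_U)$ to be a rough quasi-isometry in the sense that
\[
A^{-1}k_U - B \le \mu_U \le A k_U + B .
\]
The constants depend only on $n$, the uniformity constant, and the uniform perfectness data.

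\textbf{Step 2: the chain metric.} Take $d$ as in Definition \ref{def:chain}. I assume it is the infimum of $\sum \mu_U(x_{i},x_{i+1})$ over chains whose consecutive points satisfy $\mu_U(x_i,x_{i+1})\le r$ (or a truncated variant of this). For $r$ large, I will show that $d$ is roughly geodesic and satisfies $\mu_U\le d\le C\mu_U+C'$.

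The lower bound $\mu_U \le d$ follows from the triangle inequality. For the upper bound, join $x$ to $y$ by a quasihyperbolic geodesic and subdivide it into pieces of $k_U$-length about $1$. Each piece has $\mu_U$-length bounded by Step 1, and once $r$ exceeds that bound the resulting chain is admissible. This gives
\[
d \le C\,k_U + C' \le C''\mu_U + C''' .
\]
The same chains along $k_U$-geodesics, with points spaced at bounded $d$-steps, also give rough geodesics for $d$.

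Consequently $(U,d)$ is quasi-isometric to both $(U,\mu_U)$ and $(U,k_U)$.

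\textbf{Step 3: transfer of hyperbolicity.} Gromov hyperbolicity is a quasi-isometry invariant among roughly geodesic spaces. This is exactly what Theorem \ref{thm:gromovchain}, the extension of V\"ais\"al\"a's theorem, provides: $(U,d)$ is roughly geodesic and quasi-isometric to the Gromov hyperbolic geodesic space $(U,k_U)$. Hence $(U,d)$ is Gromov hyperbolic.

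The quasi-isometry is the identity map, so it induces a homeomorphism of Gromov boundaries. By Bonk--Heinonen--Koskela, the Gromov boundary of $(U,k_U)$ is identified with $\partial U$, using the spherical metric in the unbounded case. Therefore the Gromov boundary of $(U,d)$ is $\partial U$ as well.

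\textbf{Main obstacle.} I expect the hardest part to be Step 1, specifically the lower bound $\mu_U\gtrsim k_U - B$. The upper bound comes easily from capacity estimates over Whitney-type chains. The lower bound, by contrast, needs uniform perfectness to produce large-modulus separating rings. I would try the known inequality
\[
\mu_U(x,y) \ge c\, j_U(x,y)
\]
for uniformly perfect boundaries, valid when $j_U$ is large. I would then handle small scales separately; this is harmless because only large-scale comparability is needed.
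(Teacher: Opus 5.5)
Your route is the paper's. You compare $\mu_U$ with the quasihyperbolic metric via Vuorinen's bound $\mu_U\ge a_1 j_U$ and the Gehring--Osgood inequality, show the chain metric is quasi-isometric to both, apply Theorem \ref{thm:gromovchain} with $Y=(U,\hat{k}_U)$, and read off the boundary from Bonk--Heinonen--Koskela.

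The gap is in rough geodesicity, the one property that makes the transfer of hyperbolicity work. That, rather than Step 1 (which is available in the literature), is the real obstacle. You assert that the chains laid along $k_U$-geodesics give rough geodesics for $d$. All those chains directly give is a cost of at most $Ck_U(x,y)+C'$, while you only know $d(x,y)\ge A^{-1}k_U(x,y)-B$. Nothing prevents the cost from exceeding $d(x,y)$ by a multiplicative factor, so a priori such a chain is only an $(L,C)$-quasi-geodesic for $d$.

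That is not enough. The metric $j_{\B^n}$ is quasi-isometric to $\rho$ via the identity, so $\rho$-geodesics are quasi-geodesics for it, yet $(\B^n,j_{\B^n})$ is not Gromov hyperbolic by H\"ast\"o \cite{Has06}. Likewise, if quasi-geodesics sufficed, your Step 1 alone would make $(U,\mu_U)$ hyperbolic, which for general $U$ is Lind\'en's open question. The proof of Theorem \ref{thm:gromovchain} uses the additive control $|d(\sigma(s),\sigma(t))-|s-t||\le h$ essentially, for instance in \eqref{eq:gromovchain1}.

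The missing idea is Proposition \ref{prop:roughgeos}, which is the reason for passing to a chain metric at all. Take an $r$-chain from $x$ to $y$ whose cost is within $1$ of $d(x,y)$, and parametrize it by the partial sums $S_i$ of its step costs. Sub-chains give $d(z_i,z_j)\le S_j-S_i$. Near-optimality and the triangle inequality give $d(z_i,z_j)\ge S_j-S_i-1$. Since each step costs at most $\psi(r)$, this is an $h$-rough geodesic with $h=\psi(r)+1$, obtained without any reference to $k_U$. (Only afterwards, via the Morse lemma in $(U,\hat{k}_U)$, could one show that your $k_U$-geodesic chains, suitably reparametrized, are rough geodesics too.) With this substitution your argument goes through.

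Two smaller points, which the paper treats explicitly:
\begin{enumerate}
\item Vuorinen's inequality is stated under a thickness condition on $\S^n\setminus U$. Uniform perfectness must therefore first be passed from $\partial U$ to the complement (Lemma \ref{lem:transfer}).
\item $\S^n\setminus U$ may have empty interior, in which case no M\"obius chart makes $U$ bounded. The comparisons of Step 1 should then be made with the spherical $\hat{k}_U$ and $\hat{\jmath}_U$ throughout (Lemmas \ref{lem:mucomp} and \ref{lem:unifgromov}), not only when identifying the boundary.
\end{enumerate}
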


The paper is organized as follows. In Section 2 we cover preliminary material, in Section 3 we introduce the $G$-modulus and cover some of its basic properties. The invariant modulus metric $d_G$ is defined in Section 4. The Schwarz-Pick Lemma, Theorem \ref{thm:sp} is proved in Section 5 and the Denjoy-Wolff Theorem in Section 7 after the chain metric built from $d_G$ is constructed in Section 6. Section 8 contains the proofs of Theorems \ref{thm:stable} and \ref{thm:coarse}. Section 9 contains the examples for Theorem \ref{thm:examples}. Finally, Section 10 collects some open questions.

{\bf Acknowledgements and AI disclosure:} Claude Fable 5.1 was used for proofreading, grammatical correction, and as an exploratory tool for discussing possible approaches to selected results under the author's mathematical supervision and guidance. The author takes full responsibility for all mathematical arguments and for the accuracy and correctness of the final manuscript.

\section{Preliminaries}

\subsection{Modulus of path families}

The standard reference for material on the modulus of path families is V\"ais\"al\"a's book \cite{Vaisala}. We refer there for more on topics we can only briefly cover here. For a family $\Gamma$ of paths in $\R^n$, for $n\geq 2$, the conformal modulus is
\[ M(\Gamma) = \inf  \int \eta^n \: dm \]
over Borel densities $\eta \geq 0$ such that every locally rectifiable $\gamma \in \Gamma$ has $\eta$-length at least $1$, that is, $\int_{\gamma} \eta \: ds \geq 1$. If we need to consider path families on $\S^n$, we can do so by ignoring paths through $\infty$, as the family of all paths through a fixed point has modulus zero. 

We say that $\Gamma_2$ is minorized by $\Gamma_1$, written $\Gamma_1<\Gamma_2$ if every $\gamma \in \Gamma_2$ has a subpath in $\Gamma_1$. Then $M(\Gamma_1) \geq M(\Gamma_2)$. If $\Gamma_i$, for $i\in \N$, is a sequence of path families, then the countable subadditivity property says that $M( \bigcup_i \Gamma_i) \leq \sum_i M(\Gamma_i)$.

By $\Delta(E,F;U)$ we denote the path family joining $E$ to $F$ while staying in the domain $U$, apart from possibly the endpoints.

\subsection{Quasiregular maps}
\label{ss:qrmaps}

We refer to Rickman's monograph \cite{Rickman} for foundational material on quasiregular maps. The inner dilatation and outer dilatation of $f$ are denoted by $K_I(f)$ and $K_O(f)$ respectively, and the maximal dilatation is $K(f) = \max \{ K_I(f) , K_O(f) \}$. Poletsky's inequality \cite[Theorem II.8.1]{Rickman} states that for a non-constant quasiregular map $f:U\to \R^n$ and any path family $\Gamma$ in $U$, we have
\[ M(f(\Gamma)) \leq K_I(f) M(\Gamma),\]
where $f(\Gamma) = \{ f\circ \gamma : \gamma \in \Gamma \}$.

If $f:U\to\R^n$ is quasiregular, then it is discrete and open, and thus has the following path lifting property, see \cite[Section II.3]{Rickman}. If $\beta:[a,b) \to \R^n$ is a path and $x_0 \in f^{-1}(\beta(a))$, there is a maximal $f$-lift $\alpha : [a,c) \to U$ of $\beta$ starting at $x_0$, where $c\leq b$ and $f\circ \alpha = \beta$ on $[a,c)$. If $c<b$ then $\dist (\alpha(t) , \partial U) \to 0$ as $t\to c$.

\subsection{Uniformly quasiregular maps and dynamics}
\label{ss:uqr}

The maximal dilatation is sub-multiplicative under composition, that is, if $f: U\to U$ is quasiregular, then for every $m\geq 1$, we have $K(f^m) \leq K(f)^m$. If $K(f) >1$, then we cannot a priori assume a uniform bound on the maximal dilatation of the iterates. If there is such an upper bound $K$ on $K(f^m)$ over all $m\in \N$, then $f$ is called uniformly $K$-quasiregular. 

Every uniformly $K$-quasiregular map $f:U\to U$ has a measurable conformal structure $G$ on $U$. This is a measurable map $x \mapsto G(x)$ into symmetric positive-definite matrices with $\det G \equiv 1$, $||G||, ||G^{-1}|| \leq H(n,K)$ and $f^*G = G$ almost everywhere, where
\[ f^*G(x):= J_f(x)^{-2/n} Df(x)^T G(f(x)) Df(x).\]
See Iwaniec and Martin's paper \cite{IM96}, as well as their book \cite[Chapter 21]{IM01} for more on this idea; the construction, by averaging the pullbacks of the standard structure under the iterates, works verbatim for uniformly quasiregular self-maps of a domain $U$.

If $f:\S^n \to \S^n$ is uniformly quasiregular, then the definitions for the Fatou set $F(f)$ and the Julia set $J(f)$ go through verbatim from their definitions in complex dynamics. More precisely, $x\in F(f)$ if there is a neighbourhood $V$ of $x$ such that the family of iterates restricted to $V$, $\{ f^m|_V : m\in \N \}$, forms a normal family. On the other hand, $x\in J(f)$ if there is no such neighbourhood. Evidently $\S^n$ is then the disjoint union of $F(f)$ and $J(f)$, but less trivially, if $f$ is non-injective, then $J(f)$ is non-empty. We refer to \cite{FN} for much more on the foundations of quasiregular dynamics. Following \cite[Section 4]{HMM}, a component $U$ of $F(f)$ is called {\it stable} if $f(U) \subset U$; a stable component is an {\it attracting} (or {\it super-attracting}) {\it basin} if $f^m \to x_0 \in U$ locally uniformly on $U$ with $x_0$ an attracting (super-attracting) fixed point, a {\it rotation domain} if $f|_U$ is injective and some subsequence $f^{m_j} \to \operatorname{id}$ locally uniformly on $U$, and a {\it parabolic basin} if some subsequence $f^{m_j}$ converges locally uniformly on $U$ to a fixed point of $f$ in $\partial U$. We refer to \cite{HMM} for the precise statements.

\subsection{Metrics}
\label{ss:metrics}

For a proper subdomain $U$ of $\R^n$, the modulus metric is defined by
\[ \mu_U(x,y) = \inf_C M( \Delta (C , \partial U ; U)),\]
where the infimum is taken over continua $C \subset U$ containing $x$ and $y$. If $\operatorname{cap} \partial U >0$, then $\mu_U$ is a metric, see \cite[Section 2]{Vuo85} or \cite[Section 10.2]{HKV}, and if $U=\B^n$, then
\[ \mu_{\B^n} (x,y) = \Phi (\rho(x,y)),\]
where $\rho$ is the hyperbolic metric on $\B^n$, and 
\[ \Phi(t) = \gamma_n ( 1/\tanh(t/2))\]
where $\gamma_n$ is the Gr\"otzsch capacity function. See \cite[Section 10.2]{HKV} for these definitions and properties. In particular, $\Phi$ is an increasing homeomorphism of $[0,\infty)$ onto itself. We will need the following estimate of Anderson \cite{And74}, see also \cite[Theorem 9.17]{HKV}. For all $n\geq 2$ and $s>1$, there exists a constant $c_n>0$ depending only on $n$ such that
\begin{equation}
\label{eq:cn} 
2^{n-1} c_n \log \frac{s+1}{s-1} \leq \gamma_n(s) \leq 2^{n-1}c_n \log \frac{4(s+1)}{s-1}.
\end{equation}

Assuming still that $U$ is a proper subdomain of $\R^n$, let $k_U$ be the quasihyperbolic metric of $U$, that is, the conformal metric obtained by integrating the density $\dist(x,\partial U)^{-1}$, and let
\[ j_U(x,y) = \log \left ( 1+ \frac{ |x-y|}{ \min \{ \dist (x,\partial U) , \dist(y,\partial U) \} } \right ).\]
Gehring and Osgood \cite{GO79} defined a uniform domain $U$ as one for which $k_U \leq cj_U +d$. We collect several results in the following lemma.

\begin{lemma}
\label{lem:list}
\begin{enumerate}[(a)]
\item There exist constants $b_1,b_2$ depending only on $n$ such that $\mu_U \leq b_1 k_U + b_2$ for every such $U$, see \cite[Lemma 2.39(3)]{Vuo85} or \cite[Lemma 10.7]{HKV}.
\item If $\S^n \setminus U$ satisfies a certain metric thickness condition, see \cite[(2.21)]{Vuo87}, then $\mu_U \geq a_1 j_U$, with $j_U$ taken in the spherical metric and $a_1$ depending on $n$ and the thickness parameters, see \cite[Theorem 2.27]{Vuo87}. In particular, $\mu_U$ is complete. A sufficient condition for the metric thickness property is that $\partial U$ (viewed as a subset of $\S^n$) is connected and contains at least two points. More generally, if $\partial U$ is uniformly perfect, then it satisfies the thickness condition, see J\"arvi and Vuorinen \cite[Theorem 4.4]{JV96}; uniformly perfect sets have positive capacity, see \cite[Theorem 4.1 and Corollary 4.2]{JV96}.
\item The Julia set of a uniformly quasiregular map $f:\S^n \to \S^n$ of degree at least $2$ is uniformly perfect, see the paper of the author and Nicks \cite{FN11}.
\end{enumerate}
\end{lemma}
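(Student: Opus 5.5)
Since Lemma \ref{lem:list} collects known facts, the plan is to cite each part and add only enough of the underlying argument to confirm that the constants depend on what is claimed.

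For~(a), I would fix $x,y\in U$ and a quasihyperbolic geodesic (or near-geodesic) $\gamma$ joining them. I would cover $\gamma$ by Whitney-type balls $B_i=B(z_i,\tfrac12\dist(z_i,\partial U))$ with centres $z_i\in\gamma$. Consecutive centres are chosen at quasihyperbolic distance about $\tfrac14$ apart, so the number of balls is at most $c_n k_U(x,y)+1$. With $C=\gamma$, the family $\Delta(C,\partial U;U)$ is minorized by $\bigcup_i\Delta(\overline{B_i},\partial U;U)$. Each family $\Delta(\overline{B_i},\partial U;U)$ is in turn minorized by the family crossing the ring $B(z_i,\dist(z_i,\partial U))\setminus \overline{B_i}$, whose modulus is $\omega_{n-1}(\log 2)^{1-n}$. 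Countable subadditivity then gives $\mu_U(x,y)\le \omega_{n-1}(\log 2)^{1-n}(c_nk_U(x,y)+1)$, which fixes $b_1,b_2$ in terms of $n$ only.

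For~(b), I would quote \cite[Theorem 2.27]{Vuo87}. The idea is that any continuum $C\ni x,y$ has diameter at least $|x-y|$ and comes within $\min\{\dist(x,\partial U),\dist(y,\partial U)\}$ of $\partial U$. The thickness condition (2.21) of \cite{Vuo87} then gives a lower bound on $M(\Delta(C,\partial U;U))$ by comparison with Teichm\"uller-type rings. This bound is comparable to $j_U(x,y)$ when $j_U$ is large, and a separate small-scale estimate handles small $j_U$. Completeness of $\mu_U$ follows because $j_U$ is complete: $j_U(x,y)\to\infty$ as $y\to\partial U$ with $x$ fixed. For connected boundaries with at least two points, thickness is classical. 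For uniformly perfect boundaries, I would cite \cite[Theorem 4.4]{JV96}, where uniform perfectness supplies boundary points in every annulus $\{r<|z-w|<Cr\}$ centred on $\partial U$. Positive capacity follows from \cite[Theorem 4.1, Corollary 4.2]{JV96}.

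Part~(c) is exactly the main theorem of \cite{FN11}, so I would only cite it. Its proof argues by contradiction. If $J(f)$ is not uniformly perfect, there are arbitrarily thick ring domains separating $J(f)$. Rescaling these and using the invariance $f^{-1}(J(f))=J(f)$ together with the uniform bound on $K(f^m)$ then contradicts the blowing-up property of the Julia set.

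The only step needing real care is the counting in~(a): one must check that the Whitney chain length is linear in $k_U$ with dimension-only constants. This is standard.
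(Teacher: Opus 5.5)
Your proposal is correct and agrees with the paper, which proves this lemma only through the citations embedded in its statement. Your Whitney-ball argument for (a) is a valid self-contained check with constants depending only on $n$, and it is essentially the chaining argument the paper gives for the spherical analogue in Lemma \ref{lem:muupper}; the difference is that you apply subadditivity of modulus to a cover of a single continuum (the quasihyperbolic geodesic), whereas the paper chains the triangle inequality for $\mu_U$ using $\mu_U\le\mu_B$ on Whitney-scale balls. One point is left implicit in your sketch of (b), as in the statement itself: the thickness condition is required of $\S^n\setminus U$ rather than of $\partial U$, and when the paper actually invokes (b), in Lemma \ref{lem:mucomp}, it bridges this with Lemma \ref{lem:transfer}.
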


For a metric $d$ on a set $X$ and $o\in X$, the Gromov product is
\[ (u|w)_o = \tfrac12 \left ( d(o,u) + d(o,w) - d(u,w) \right ),\]
and we say that $(X,d)$ is Gromov hyperbolic in the four-point sense if there exists $\delta>0$ with
\[ (u|w)_o \geq \min \{ (u|v)_o , (v|w)_o \} - \delta\]
for all $u,v,w,o\in X$. A sequence $(z_k)$ with $d(o,z_k) \to \infty$ and $(z_k|z_m)_o \to \infty$ as $k,m\to\infty$ is called a Gromov sequence, two Gromov sequences $(z_k),(w_k)$ being equivalent if $(z_k|w_k)_o \to \infty$; the Gromov boundary is the set of equivalence classes. For $X = \B^n$, if in addition we have 
\begin{equation}
\label{eq:gromov}
d(o,z_k) \to \infty, \quad (z_k|z_m)_o \to \infty, \quad \text{ as } k,m\to \infty, 
\end{equation}
then the sequence $(z_m)$ converges in the Euclidean topology to a point of $\S^{n-1}$, equivalent sequences have the same limit, every point of $\S^{n-1}$ arises, and inequivalent sequences have distinct limits, we say that the Gromov boundary of $(\B^n,d)$ is $\S^{n-1}$ in the usual way. For the hyperbolic metric this is classical, see \cite[Chapter III.H.3]{BH99}, and one may take $\delta = \log 3$, see \cite{Lin07} and the references there.

Finally in this section, we will need the following result of Ca\l{}ka.

\begin{theorem}[Theorem 5.6, \cite{Cal84}]
\label{thm:calka}
Let $f$ be a nonexpansive mapping of a finitely totally bounded metric space $M$ into itself. If for some $z_0\in M$ the sequence $\{ f^m(z_0) \}$ contains a bounded subsequence then for every $z\in M$ the sequence $\{f^m(z) \}$ is bounded.
\end{theorem}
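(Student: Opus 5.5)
The plan is to reduce to one orbit and then argue by contradiction using record times. Write $x_m=f^m(z_0)$ and $r(m)=d(z_0,x_m)$. The reduction is immediate. Nonexpansiveness gives $d(f^m(z),x_m)\le d(z,z_0)$ for every $m$, so the orbit of any $z$ stays within bounded distance of the orbit of $z_0$. It therefore suffices to show that $(x_m)$ itself is bounded.

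\textbf{Step 1: a Cauchy subsequence.} By hypothesis there are indices $m_1<m_2<\cdots$ with $r(m_k)\le R$. Since $M$ is finitely totally bounded, the ball $\{x:d(z_0,x)\le R\}$ is totally bounded. A diagonal pigeonhole argument over finite $\varepsilon$-nets, with $\varepsilon=2^{-j}$, then lets us pass to a subsequence along which $(x_{m_k})$ is Cauchy. Fix $k_0$ such that $d(x_{m_i},x_{m_j})<1$ for all $i,j\ge k_0$.

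\textbf{Step 2: shifted comparison.} Nonexpansiveness applied $t$ times gives
\[
d(x_{m_i+t},x_{m_j+t})\le d(x_{m_i},x_{m_j})<1 \quad\text{and}\quad d(x_{m_j+t},x_t)\le r(m_j)\le R
\]
for all $i,j\ge k_0$ and all $t\ge 0$.

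\textbf{Step 3: record times.} Suppose $(x_m)$ is unbounded. Then there are record times $n_1<n_2<\cdots$ with $r(n_l)\to\infty$ and $r(n_l)>r(t)$ for every $t<n_l$. For a record time $n\ge m_{k_0}$, choose $i\ge k_0$ with $m_i\le n$ and put $t=n-m_i$. Step 2 gives $d(x_n,x_t)\le R+1$, hence $r(t)\ge r(n)-R-1$ with $t<n$. The goal is to play this off against the record property and against the Cauchy subsequence. The natural next move is to choose $i$ so that $t$ is itself close to some $m_j$ modulo the near-periodicity. Concretely, one would use the finite $1$-net of the totally bounded ball of radius $3R+2$ around $z_0$, together with the inequality $d(x_n,x_{m_j})\le r(n-m_j)$ for $m_j\le n$. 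The aim is to force an index $s<n$ with $r(s)\ge r(n)$, which contradicts $n$ being a record.

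\textbf{Main obstacle.} I expect Step 3 to be the hard part. The shifted comparisons above only give bounded increments $d(x_n,x_{n-m_i})\le R+1$, and iterating them yields linear growth rather than a bound. Closing the argument needs a genuine use of total boundedness of larger balls, not just of the ball containing the subsequence. What I would try first is a pigeonhole argument on the finitely many net points visited by $x_{n-m_k}$ as $k$ varies. The aim is to produce two indices $k<k'$ with $x_{n-m_k}$ and $x_{n-m_{k'}}$ close. Then $x_n$ would be close to $x_{n+(m_{k'}-m_k)}$, and combining this with the Cauchy property of $(x_{m_k})$ should bound $r(n)$ by a constant depending only on $R$ and the net. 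This would contradict $r(n_l)\to\infty$.
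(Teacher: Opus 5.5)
The paper gives no proof of this statement. It is quoted from Ca\l{}ka \cite{Cal84} and used only for proper spaces. Judged as a standalone argument, your proposal has a genuine gap: Step 3 is never carried out, and the route you sketch for it cannot work. The reduction to the orbit of $z_0$ and Steps 1--2 are correct, but as you note they give only bounded increments and hence linear growth. The points $x_{n-m_k}$ you want to pigeonhole satisfy $d(x_n,x_{n-m_k})\le R$, so they lie in the ball of radius $R$ about $x_n$. That ball escapes to infinity along the record times. Finite total boundedness gives it a finite net, but the net's cardinality need not be bounded uniformly in $n$. More decisively, suppose two of these points were close. The resulting estimate, that $d(x_n,x_{n+q})$ is small with $q=m_{k'}-m_k>0$, is a statement about the future of $x_n$. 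It is perfectly compatible with $r(n)$ being huge, so it cannot contradict the record property. Record times and the Cauchy subsequence are not the mechanism.

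What is missing is to use total boundedness of one \emph{fixed} ball to obtain return times with bounded gaps. Assume $R>0$ and let $O=\{m\ge 0 : r(m)\le 2R\}$, so $0\in O$. Take a finite $R/2$-net of $\{x: d(z_0,x)\le 2R\}$, and pick one orbit point from each net ball that meets $\{x_m : m\in O\}$. This gives $c_1,\dots,c_K\in O$ such that every $x_m$ with $m\in O$ satisfies $d(x_m,x_{c_i})<R$ for some $i$. For each $i$ choose $p_i$ among the $m_k$ with $p_i>c_i$, and put $j_i=p_i-c_i\ge 1$ and $N=\max_i j_i$. If $m\in O$ and $d(x_m,x_{c_i})<R$, nonexpansiveness gives $d(x_{m+j_i},x_{p_i})\le d(x_m,x_{c_i})<R$. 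Since $r(p_i)\le R$, this gives $r(m+j_i)<2R$, so $m+j_i\in O$. Hence $O$ contains $0=t_0<t_1<\cdots$ with $t_{l+1}-t_l\le N$, and for $t_l\le m<t_{l+1}$,
\[ r(m)\le r(t_l)+d(x_{t_l},x_m)\le 2R+r(m-t_l)\le 2R+\max_{0\le j<N} r(j). \]
This bounds the orbit of $z_0$, using only total boundedness of the ball of radius $2R$. Your first reduction then bounds every orbit.
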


We will apply this result to proper metric spaces (that is, closed bounded sets are compact) and \cite[Proposition 2.7(i)]{Cal84} shows that this implies finitely totally bounded.

\section{The $G$-modulus}

Here we define the $G$-modulus of a path family.
Let $U$ be a proper subdomain of $\R^n$ and let $G$ be a bounded measurable conformal structure on $U$ and set $\lambda = H^{1/2}$, recalling Section \ref{ss:uqr}. As the eigenvalues of $G(x)$ lie in $[H^{-1},H]$, we have
\begin{equation}
\label{eq:vcomp} 
\lambda^{-1} |v| \leq |v|_{G(x)} := \langle G(x)v,v \rangle ^{1/2} \leq \lambda |v| 
\end{equation}
for almost every $x\in U$ and all $v$. Since line integrals along paths see sets of measure zero, we fix a Borel representative $\tilde{G}$ of $G$, that is, a Borel map agreeing with $G$ almost everywhere; we may and do assume that \eqref{eq:vcomp} holds for $\tilde{G}(x)$ at every $x$. The following elementary fact shows that the choice does not matter.

\begin{lemma}
\label{lem:null}
If $N\subset \R^n$ has measure zero, then the family $\Gamma_N$ of locally rectifiable paths meeting $N$ in a set of positive length has modulus zero.
\end{lemma}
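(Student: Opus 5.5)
The plan is to exhibit a single admissible density that assigns infinite $n$-th power integral to $N$, yet costs nothing. Since $N$ has Lebesgue measure zero, choose a Borel set $B\supset N$ of measure zero; for instance take $B=\bigcap_k V_k$ with $V_k$ open, $V_k\supset N$ and $m(V_k)<1/k$. Replacing $N$ by $B$ only enlarges the family $\Gamma_N$. So we may assume $N$ is Borel, and it suffices to treat that case.

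Fix $\varepsilon>0$ and define $\eta=\infty\cdot\chi_N$. This is a Borel function, and since $m(N)=0$ we have $\int\eta^n\,dm=0$. For any locally rectifiable $\gamma\in\Gamma_N$, the set of arclength parameters $s$ with $\gamma^0(s)\in N$ has positive measure, where $\gamma^0$ is the arclength parametrization of some closed subpath. Consequently $\int_\gamma\eta\,ds=\infty\geq 1$.

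Therefore $\eta$ is admissible for $\Gamma_N$, which gives $M(\Gamma_N)\le 0$. If one prefers finite-valued densities, the same works with $\eta_k=k\chi_N$. Each such $\eta_k$ gives $\int_\gamma\eta_k\,ds=k\,\ell(\gamma\cap N)$, which is at least $1$ for paths with $\ell(\gamma\cap N)\ge 1/k$. Writing $\Gamma_N=\bigcup_k\Gamma_{N,k}$ according to this length threshold, each $\Gamma_{N,k}$ has modulus $0$. Countable subadditivity then yields $M(\Gamma_N)=0$.

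The only point needing care is measurability. "Meeting $N$ in a set of positive length" must be interpreted via the Borel hull $B$, so that $\int_\gamma \chi_B\,ds$ makes sense; this is the main, though mild, obstacle. Enlarging $N$ to the Borel null set $B$ handles it, since both the hypothesis and the conclusion are monotone in $N$.
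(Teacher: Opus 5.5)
Your proof is correct, and your finite-valued version (the densities $k\chi_B$ on the families $\Gamma_{N,k}$ of paths meeting $B$ in length at least $1/k$, followed by countable subadditivity) is exactly the paper's argument. Your primary version with $\eta=\infty\cdot\chi_B$ is also valid, because in V\"ais\"al\"a's framework admissible densities are Borel functions into $[0,\infty]$ with the convention $0\cdot\infty=0$; it reaches the conclusion with a single density and no subadditivity, and the stray ``fix $\varepsilon>0$'' plays no role and can be deleted.
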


\begin{proof}
Enlarge $N$ to a Borel null set. For $k\in \N$ let $\Gamma_{N,k}$ consist of the paths meeting $N$ in length at least $1/k$. The Borel density $k \chi_N$ is admissible for $\Gamma_{N,k}$ with $\int (k\chi_N)^n \: dm = 0$, so $M(\Gamma_{N,k}) = 0$, and $M(\Gamma_N) = 0$ by countable subadditivity.
\end{proof}

For a locally rectifiable path $\gamma$, parametrized by Euclidean arclength, and a Borel $\eta \geq 0$, set
\[ \int_{\gamma} \eta \: |d\gamma|_G : = \int_0^{\ell(\gamma)} \eta(\gamma(s)) |\gamma'(s)|_{\tilde{G}(\gamma(s))} \: ds,\]
where $\ell(\gamma)$ denotes the length of $\gamma$.

\begin{definition}
\label{def:Gadmiss}
We say that $\eta$ is $G$-admissible for the path family $\Gamma$ if there is a subfamily $\Gamma_0 \subset \Gamma$ with $M(\Gamma_0) = 0$ such that $\int_{\gamma} \eta \: |d\gamma|_G \geq 1$ for every locally rectifiable $\gamma \in \Gamma \setminus \Gamma_0$. We then define the $G$-modulus of $\Gamma$ to be
\[ M_G(\Gamma) = \inf \int_U \eta^n \: dm \]
where the infimum is over all $G$-admissible $\eta$.
\end{definition}

We give some important properties of the $G$-modulus.

\begin{lemma}
\label{lem:MGind}
The $G$-modulus $M_G$ does not depend on the choice of Borel representative $\tilde{G}$.
\end{lemma}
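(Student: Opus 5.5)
Let $\tilde G_1$ and $\tilde G_2$ be two Borel representatives of $G$. Then $N=\{x : \tilde G_1(x)\neq \tilde G_2(x)\}$ is a Borel set of measure zero. By symmetry it suffices to show that every $\eta$ which is $G$-admissible for $\Gamma$ with respect to $\tilde G_1$ is also $G$-admissible with respect to $\tilde G_2$. The integral $\int \eta^n\,dm$ does not involve $\tilde G$, so this gives $M_G^{(2)}(\Gamma)\le M_G^{(1)}(\Gamma)$, and the reverse inequality follows by interchanging the roles of the two representatives.

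So fix such an $\eta$, with exceptional subfamily $\Gamma_0$, $M(\Gamma_0)=0$. Let $\Gamma_N$ be the family of locally rectifiable paths meeting $N$ in a set of positive length; by Lemma \ref{lem:null}, $M(\Gamma_N)=0$. Set $\Gamma_0' = \Gamma_0\cup(\Gamma\cap\Gamma_N)$. By countable subadditivity, together with monotonicity of $M$ under inclusion, we get $M(\Gamma_0')=0$.

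Now take a locally rectifiable $\gamma\in\Gamma\setminus\Gamma_0'$, parametrized by arclength. The set $\{s : \gamma(s)\in N\}$ has Lebesgue measure zero in $[0,\ell(\gamma)]$, so for almost every $s$ the integrands $\eta(\gamma(s))\,|\gamma'(s)|_{\tilde G_1(\gamma(s))}$ and $\eta(\gamma(s))\,|\gamma'(s)|_{\tilde G_2(\gamma(s))}$ agree. Hence the two $G$-lengths coincide, and the $\tilde G_2$-integral is at least $1$. If $\gamma$ is only locally rectifiable, so that $\ell(\gamma)=\infty$, the same argument applies on each compact subpath and the integrals agree as integrals over $[0,\infty)$.

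The only subtle point is measurability. The set $N$ must be Borel, which holds since both representatives are Borel, so that the composition along $\gamma$ is measurable and Lemma \ref{lem:null} applies directly. Beyond that, the argument is routine.
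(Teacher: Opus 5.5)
Your proposal is correct and follows the same route as the paper: take the Borel null set $N$ where the two representatives differ, use Lemma \ref{lem:null} to get $M(\Gamma_N)=0$, and observe that off $\Gamma_N$ the two $G$-length integrals coincide, so the two notions of $G$-admissibility agree. You simply write out details the paper leaves implicit, namely enlarging the exceptional family to $\Gamma_0\cup(\Gamma\cap\Gamma_N)$ and running the symmetric argument.
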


\begin{proof}
Suppose that $\tilde{G_1}$ and $\tilde{G_2}$ are two Borel representatives of $G$ that agree outside a null set $N$. By Lemma \ref{lem:null}, $M(\Gamma_N) = 0$, and off $\Gamma_N$ the two integrals $\int_{\gamma} \eta |d\gamma|_{G_i}$ coincide for every $\eta$. Hence the two notions of $G$-admissibility agree.
\end{proof}

\begin{lemma}
\label{lem:Gcomp}
For every path family $\Gamma$ in $U$, we have
\[ \lambda^{-n} M(\Gamma) \leq M_G(\Gamma) \leq \lambda^n M(\Gamma).\]
In particular, $M_G$-null families are exactly $M$-null families, and $M_G$ is countably subadditive and satisfies the minorization inequality.
\end{lemma}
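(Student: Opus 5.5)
The plan is to compare admissible densities for the two moduli pointwise, using the norm comparison \eqref{eq:vcomp}, which holds at every point for the Borel representative $\tilde{G}$. For a locally rectifiable path $\gamma$ parametrized by Euclidean arclength we have $|\gamma'(s)| = 1$ for almost every $s$. Hence
\[ \lambda^{-1} \int_\gamma \eta \, ds \leq \int_\gamma \eta \, |d\gamma|_G \leq \lambda \int_\gamma \eta \, ds \]
for every Borel $\eta \geq 0$.

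For the upper bound $M_G(\Gamma) \leq \lambda^n M(\Gamma)$, take any $\eta$ admissible for $\Gamma$ in the usual sense. Then $\lambda\eta$ is $G$-admissible, with the exceptional family $\Gamma_0 = \emptyset$, because $\int_\gamma \lambda\eta\, |d\gamma|_G \geq \int_\gamma \eta\, ds \geq 1$. This gives $M_G(\Gamma) \leq \lambda^n \int \eta^n\, dm$, and taking the infimum over $\eta$ yields the bound.

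For the lower bound $\lambda^{-n} M(\Gamma) \leq M_G(\Gamma)$, take $\eta$ that is $G$-admissible, with exceptional family $\Gamma_0$ satisfying $M(\Gamma_0) = 0$. Then $\lambda\eta$ satisfies $\int_\gamma \lambda\eta \, ds \geq 1$ for every $\gamma \in \Gamma\setminus\Gamma_0$. It is therefore admissible for $\Gamma\setminus\Gamma_0$, and
\[ M(\Gamma) \leq M(\Gamma\setminus \Gamma_0) + M(\Gamma_0) \leq \lambda^n \int \eta^n\, dm \]
by subadditivity of the ordinary modulus. Taking the infimum gives $M(\Gamma) \leq \lambda^n M_G(\Gamma)$.

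One small point needs attention: densities for $M_G$ are integrated over $U$ only, while $M$ integrates over $\R^n$. Since $\Gamma$ lies in $U$, we may extend $\eta$ by zero off $U$, or restrict a standard density to $U$, without affecting admissibility or increasing the integral.

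The consequences then follow directly. The two-sided comparison shows that $M_G(\Gamma) = 0$ if and only if $M(\Gamma) = 0$. Minorization holds as follows: if $\Gamma_1 < \Gamma_2$ and $\eta$ is $G$-admissible for $\Gamma_1$ with exceptional family $\Gamma_0$, then $\eta$ is $G$-admissible for $\Gamma_2$ with exceptional family $\Gamma_0' = \{\gamma \in \Gamma_2 : \text{every subpath of } \gamma \text{ lying in } \Gamma_1 \text{ is in } \Gamma_0\}$. This family is minorized by $\Gamma_0$, so $M(\Gamma_0') \leq M(\Gamma_0) = 0$ by ordinary minorization. For all other $\gamma$, the $G$-length is monotone under passing to subpaths.

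For countable subadditivity, choose $G$-admissible $\eta_i$ for $\Gamma_i$ with $\int \eta_i^n \leq M_G(\Gamma_i) + \varepsilon 2^{-i}$, and set $\eta = (\sum_i \eta_i^n)^{1/n} \geq \eta_i$. The union of the exceptional families is $M$-null by countable subadditivity of $M$, so $\eta$ is $G$-admissible for $\bigcup_i \Gamma_i$. This gives the bound with an extra $\varepsilon$, which we then let tend to zero.

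The only step requiring care is the handling of exceptional null families in minorization and subadditivity. There is no real obstacle there, since these families are controlled by the corresponding properties of the ordinary modulus.
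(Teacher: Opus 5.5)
Your proposal is correct and follows the paper's proof: the pointwise comparison \eqref{eq:vcomp} for $\tilde{G}$ makes $\lambda\eta$ admissible in each direction, and the $G$-exceptional family $\Gamma_0$ is discarded using $M(\Gamma)=M(\Gamma\setminus\Gamma_0)$. Your explicit arguments for minorization, with the auxiliary family $\Gamma_0'$, and for countable subadditivity spell out what the paper compresses into ``exceptional families are absorbed into $\Gamma_0$.''
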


\begin{proof}
If $\eta$ is $G$-admissible, then $\lambda \eta$ is admissible in the classical sense for $\Gamma \setminus \Gamma_0$, since by the convention preceding Lemma \ref{lem:null}, \eqref{eq:vcomp} holds for $\tilde{G}$ everywhere. Since $M(\Gamma) = M(\Gamma\setminus \Gamma_0)$, this gives $M(\Gamma) \leq \lambda^n M_G(\Gamma)$. Symmetrically, we have $M_G(\Gamma) \leq \lambda^n M(\Gamma)$. The remaining properties have the same proofs as for $M$, as exceptional families are absorbed into $\Gamma_0$.
\end{proof}

\section{The invariant modulus metric}

In this section, we define the invariant modulus metric and study its properties. Throughout, $U$ is a proper subdomain of $\R^n$, for $n\geq 2$, with $\operatorname{cap} \partial U >0$, so that $\mu_U$ is a metric.

\begin{definition}
\label{def:Gmetric}
For $x,y\in U$, we set
\[ d_G(x,y) = \inf \{ M_G(\Delta ( C , \partial U ; U)) : C\subset U \text{ a continuum }, x,y \in C \}.\]
\end{definition}

Immediately from Lemma \ref{lem:Gcomp} and Definition \ref{def:Gmetric}, we see that
\begin{equation}
\label{eq:mucomps}
\lambda^{-n} \mu_U \leq d_G \leq \lambda^n \mu_U.
\end{equation}

\begin{proposition}
\label{prop:dGmetric}
The expression $d_G$ in Definition \ref{def:Gmetric} is a metric on $U$.
\end{proposition}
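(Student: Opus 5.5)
The argument checks the metric axioms one at a time. Two of them are immediate. Finiteness and non-negativity follow from the comparison \eqref{eq:mucomps}, since $d_G \leq \lambda^n \mu_U < \infty$. Symmetry is built into Definition \ref{def:Gmetric}, because the condition $x,y\in C$ is symmetric in $x$ and $y$. For $x=y$, we will use the continuum $C=\{x\}$; a point is a degenerate continuum, and this is the same convention under which $\mu_U(x,x)=0$. If one prefers nondegenerate continua, take small closed balls $C=\overline{B}(x,\epsilon)$ instead. Then $M(\Delta(C,\partial U;U))\to 0$ as $\epsilon\to 0$, since in dimension $n\geq 2$ this modulus is dominated by that of a spherical ring whose modulus tends to zero. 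By Lemma \ref{lem:Gcomp}, the $G$-modulus also tends to zero, so $d_G(x,x)=0$.

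Positivity for $x\neq y$ also comes from \eqref{eq:mucomps}. We have $d_G(x,y)\geq \lambda^{-n}\mu_U(x,y)$, and $\mu_U(x,y)>0$ because $\mu_U$ is a metric when $\operatorname{cap}\partial U>0$, as recalled in Section \ref{ss:metrics}.

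The main step is the triangle inequality. Fix $x,y,z\in U$ and $\epsilon>0$. Choose continua $C_1\ni x,y$ and $C_2\ni y,z$ with
\[ M_G(\Delta(C_i,\partial U;U))\leq d(\cdot,\cdot)+\epsilon \]
for the corresponding pair of points. The union $C=C_1\cup C_2$ is a continuum, since the two sets meet at $y$, and it contains $x$ and $z$. Every path in $\Delta(C,\partial U;U)$ starts in $C_1$ or in $C_2$, so
\[ \Delta(C,\partial U;U)\subset \Delta(C_1,\partial U;U)\cup\Delta(C_2,\partial U;U). \]
Monotonicity together with the subadditivity from Lemma \ref{lem:Gcomp} then gives
\[ d_G(x,z)\leq M_G(\Delta(C,\partial U;U)) \leq d_G(x,y)+d_G(y,z)+2\epsilon. \]
Letting $\epsilon\to0$ finishes the proof.

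The only point needing care is whether $M_G$ is monotone under inclusion and subadditive, given its nonstandard admissibility with exceptional families. Monotonicity is clear, since a $G$-admissible density for a larger family is admissible for a subfamily, with the same exceptional subfamily restricted. Subadditivity holds because the sum of two $G$-admissible densities is admissible off the union of the two null exceptional families, and this union is null. Lemma \ref{lem:Gcomp} already asserts both properties. I expect no real obstacle beyond the degenerate-continuum convention for $d_G(x,x)=0$.
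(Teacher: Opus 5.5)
Your proof is correct and is essentially the paper's argument: the singleton continuum $\{x\}$ for $d_G(x,x)=0$, the lower bound in \eqref{eq:mucomps} for positivity, and the continuum $C_1\cup C_2$ with subadditivity from Lemma \ref{lem:Gcomp} for the triangle inequality. One correction to your closing aside. The \emph{sum} $\eta_1+\eta_2$ only gives $M_G(\Gamma_1\cup\Gamma_2)^{1/n}\le M_G(\Gamma_1)^{1/n}+M_G(\Gamma_2)^{1/n}$, which yields only a quasi-triangle inequality; subadditivity instead comes from $\max\{\eta_1,\eta_2\}$ (or $(\eta_1^n+\eta_2^n)^{1/n}$), which is admissible off the same null exceptional family and satisfies $\int\max\{\eta_1,\eta_2\}^n\,dm\le\int\eta_1^n\,dm+\int\eta_2^n\,dm$. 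Since you cite Lemma \ref{lem:Gcomp} for subadditivity anyway, the proof itself is unaffected.
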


\begin{proof}
Symmetry is clear. For positivity, if $x\neq y$, then $d_G(x,y) \geq \lambda^{-n} \mu_U(x,y) > 0$ as $\mu_U$ is a metric. If we take $C = \{x \}$, then $M( \Delta ( \{ x \} , \partial U ; U)) =0$ as we have a family of paths all passing through one point. Then Lemma \ref{lem:Gcomp} implies that $M_G(\Delta ( \{ x \} , \partial U ; U)) =0$ and hence $d_G(x,x) = 0$.

Finally, for the triangle inequality, given $\epsilon >0$, choose continua $C_1 \ni x,y$ and $C_2 \ni y,z$ with $M_G(\Delta(C_i,\partial U ; U))$ within $\epsilon$ of the respective infima. Then $C =C_1 \cup C_2$ is a continuum containing $x,z$ and a path starting in $C$ starts in $C_1$ or in $C_2$, so 
\[ \Delta ( C , \partial U ; U) = \Delta (C_1,\partial U ; U) \cup \Delta ( C_2 , \partial U ; U).\] 
Then subadditivity gives 
\[ d_G(x,z) \leq d_G(x,y) + d_G(y,z) + 2\epsilon.\]
As $\epsilon $ was arbitrary, the triangle inequality holds.
\end{proof}

For the rest of this section, we will set $U = \B^n$.

\begin{proposition}
\label{prop:top}
We have the following properties.
\begin{enumerate}[(a)]
\item The metric $d_G$ induces the Euclidean topology on $\B^n$. 
\item A set is $d_G$-bounded if and only if it is $\rho$-bounded if and only if it is relatively compact in $\B^n$.
\item The metric space $(\B^n,d_G)$ is proper.
\item If $x\in \B^n$, then $d_G(x,y_j) \to \infty$ if and only if $\dist(y_j , \S^{n-1}) \to 0$.
\end{enumerate}
\end{proposition}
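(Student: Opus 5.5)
The whole proposition should follow from the comparison \eqref{eq:mucomps}, $\lambda^{-n}\mu_{\B^n} \le d_G \le \lambda^n \mu_{\B^n}$, together with the explicit formula $\mu_{\B^n} = \Phi\circ\rho$. Here $\Phi$ is an increasing homeomorphism of $[0,\infty)$ onto itself. So the plan is to transfer each property from the hyperbolic metric $\rho$ to $d_G$ through these two facts.

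\textbf{(a).} Let $x\in\B^n$ and let $(y_j)$ be a sequence in $\B^n$. From \eqref{eq:mucomps}, $d_G(x,y_j)\to 0$ holds exactly when $\mu_{\B^n}(x,y_j)=\Phi(\rho(x,y_j))\to 0$. Since $\Phi$ is a homeomorphism fixing $0$, this holds exactly when $\rho(x,y_j)\to 0$. It is classical that $\rho$ induces the Euclidean topology on $\B^n$. Both topologies are metric and hence first countable, so agreement of convergent sequences gives equality of the topologies. Equivalently, for every $r>0$ there is $s>0$ with $B_{d_G}(x,s)\subset B_\rho(x,r)$, and conversely.

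\textbf{(b) and (d).} Fix a base point $x$. A set $E$ is $d_G$-bounded iff $\sup_{y\in E}\Phi(\rho(x,y))<\infty$. Because $\Phi$ is increasing and unbounded, this holds iff $\sup_{y\in E}\rho(x,y)<\infty$. Now use the formula $\rho(0,y)=\log\frac{1+|y|}{1-|y|}$ and the triangle inequality to move the base point. This shows $\rho$-bounded sets are exactly those contained in some closed ball $\overline{B}(0,t)$ with $t<1$, that is, the relatively compact subsets of $\B^n$. Part (d) follows the same way. $d_G(x,y_j)\to\infty$ holds iff $\rho(x,y_j)\to\infty$, since $\Phi(t)\to\infty$ exactly when $t\to\infty$. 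That holds iff $|y_j|\to1$, which is the same as $\dist(y_j,\S^{n-1})\to 0$.

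\textbf{(c).} Let $E$ be closed and bounded in $(\B^n,d_G)$. By (b), $E$ lies in a compact ball $\overline{B}(0,t)$ with $t<1$. By (a), $E$ is closed in the Euclidean topology of $\B^n$, and hence closed in that compact ball. So $E$ is Euclidean compact, and by (a) again it is $d_G$-compact.

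\textbf{Main point to check.} The only step needing care is (a), and specifically the lower comparison $d_G(x,y)\to0\Rightarrow\rho(x,y)\to0$. This relies on $\Phi$ being a homeomorphism near $0$, with $\Phi(t)\to0$ only as $t\to0$. That follows from the Anderson estimate \eqref{eq:cn}: as $t\to0$ we have $s=1/\tanh(t/2)\to\infty$, and the lower bound $2^{n-1}c_n\log\frac{s+1}{s-1}$ is comparable to $2^{n}c_n/s\asymp t$. The upper bound similarly gives $\Phi(t)\to\infty$ as $t\to\infty$, which is what (b) and (d) need. Everything else is bookkeeping.
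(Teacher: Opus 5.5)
Your proposal is correct and is essentially the paper's proof. You sandwich $d_G$ between $\lambda^{-n}\Phi\circ\rho$ and $\lambda^{n}\Phi\circ\rho$ via \eqref{eq:mucomps}, then transfer each property from $\rho$ using that $\Phi$ is an increasing homeomorphism of $[0,\infty)$ onto itself.

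One small slip in your closing aside concerns the Anderson bounds. Since $\log\frac{s+1}{s-1}=t$ exactly, it is the lower bound $\Phi(t)\ge 2^{n-1}c_n t$ that forces $\Phi(t)\to\infty$. The upper bound only shows that $\Phi$ is finite on bounded sets. Neither is actually needed once $\Phi$ is known to be such a homeomorphism.
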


\begin{proof}
By \eqref{eq:mucomps} and Section \ref{ss:metrics}, we have
\[ \lambda^{-n} \Phi ( \rho(x,y)) \leq d_G(x,y) \leq \lambda^n \Phi (\rho(x,y)),\]
where $\Phi$ is an increasing homeomorphism of $[0,\infty)$. All the claims now follow from the corresponding ones for the hyperbolic metric $\rho$: it induces the Euclidean topology, $\rho$-bounded sets are exactly the relatively compact subsets of $\B^n$, $(\B^n,\rho)$ is proper, and $\rho(x,y_j) \to \infty$ if and only if $\dist (y_j , \S^{n-1}) \to 0$.
\end{proof}

\begin{lemma}
\label{lem:roughhyp}
For all $x,y\in \B^n$, we have
\[ \lambda^{-n} 2^{n-1} c_n \rho(x,y) \leq d_G(x,y) \leq \lambda^n 2^{n-1}c_n ( \rho(x,y) + \log 4),\]
where $c_n$ is the constant from \eqref{eq:cn} that depends only on $n$.
\end{lemma}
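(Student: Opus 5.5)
We combine the comparison \eqref{eq:mucomps}, the identity $\mu_{\B^n}(x,y) = \Phi(\rho(x,y)) = \gamma_n(1/\tanh(\rho/2))$, and Anderson's estimate \eqref{eq:cn}. With the bound
\[ \lambda^{-n}\gamma_n(s) \leq d_G(x,y) \leq \lambda^n \gamma_n(s), \qquad s = \frac{1}{\tanh(\rho(x,y)/2)}, \]
it suffices to prove
\[ 2^{n-1}c_n\,\rho \leq \gamma_n(s) \leq 2^{n-1}c_n(\rho+\log 4). \]
For $x\neq y$ we have $s>1$, so \eqref{eq:cn} applies. If $x=y$, both sides of the lemma are trivially compatible, since $d_G(x,x)=0$ and the upper bound is nonnegative.

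The key step is to compute $(s+1)/(s-1)$ in terms of $\rho$. Write $t=\tanh(\rho/2)$, so $s=1/t$. Then
\[ \frac{s+1}{s-1} = \frac{1+t}{1-t} = e^{\rho}, \]
using the identity $\tanh(u) = (e^{2u}-1)/(e^{2u}+1)$ with $u=\rho/2$. Hence
\[ \log\frac{s+1}{s-1} = \rho \quad\text{and}\quad \log\frac{4(s+1)}{s-1} = \rho+\log 4. \]
Substituting these into \eqref{eq:cn} gives exactly the displayed bound on $\gamma_n(s)$. Multiplying by $\lambda^{\mp n}$ then yields the lemma.

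The only delicate point is the convention for $\rho$. The identity $(1+t)/(1-t) = e^{\rho}$ relies on the normalisation of $\rho$ used in $\Phi(t) = \gamma_n(1/\tanh(t/2))$, which is curvature $-1$, as in \cite{HKV}. I would check this normalisation against \cite[Section 10.2]{HKV}, because a factor-of-two discrepancy would only rescale $\rho$ in the statement. Otherwise the argument is a direct substitution.
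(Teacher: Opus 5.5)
Your proposal is correct and matches the paper's proof. The paper also sets $s=1/\tanh(\rho/2)$, notes that $(s+1)/(s-1)=e^{\rho}$, and substitutes into Anderson's estimate \eqref{eq:cn} to get $2^{n-1}c_n\rho\leq\mu_{\B^n}\leq 2^{n-1}c_n(\rho+\log 4)$; it then concludes with \eqref{eq:mucomps}, and your separate treatment of the trivial case $x=y$ is a detail the paper leaves implicit.
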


\begin{proof}
Put $t = \rho(x,y)$ and $s = 1/\tanh(t/2)$ so that $\mu_{\B^n}(x,y) = \gamma_n(s)$ by Section \ref{ss:metrics}. Then
\[ \frac{s+1}{s-1} = \frac{ \cosh(t/2) + \sinh(t/2) }{\cosh(t/2) - \sinh(t/2)} = e^t.\]
Then Anderson's result in \eqref{eq:cn} gives
\[ 2^{n-1} c_n\rho(x,y) \leq \mu_{\B^n}(x,y) \leq 2^{n-1} c_n \rho(x,y) + 2^{n-1}c_n \log 4.\]
The lemma now follows from \eqref{eq:mucomps}. The two-sided estimate for $\mu_{\B^n}$ is \cite[(2.24)]{Vuo85}, with $8\log 2$ in place of $\log 4$, and the identity $(s+1)/(s-1) = e^t$ is \cite[(4.2)]{Lin07}.
\end{proof}

It is worth pointing out that $\Phi(t) \to 0$ like $(\log (1/t) )^{1-n}$ as $t\to 0$, so $d_G$ is not bi-Lipschitz to Euclidean distance at small scales. Its induced length metric is just $+ \infty$, so $(\B^n , d_G)$ is inherently non-geodesic. A consequence of this is that Gromov hyperbolicity cannot be transported from $\rho$ to $d_G$ by quasi-isometry alone, and is why chain metrics will be introduced later to prove our Denjoy-Wolff Theorem. While we do not know if $(\B^n,d_G)$ is Gromov hyperbolic for general $G$, in the case $G\equiv I$ we have $d_G = \mu_{\B^n}$ and a result of Lind\'en \cite[Theorem 4.1]{Lin07} states that $(\B^n , \mu_{\B^n})$ is Gromov hyperbolic, with Gromov constant at most $2^{n-1}c_n \log 12$; his proof is the computation of Lemma \ref{lem:roughhyp} combined with the four-point condition for $\rho$. By conformal invariance his result covers all simply connected planar domains, and, since for $n=2$ the quasiconformal map $\varphi$ solving the Beltrami equation for $G$ satisfies $\varphi^*I = G$, so that $M_G(\Gamma) = M(\varphi(\Gamma))$ by Lemma \ref{lem:Gpol} applied to $\varphi$ and $\varphi^{-1}$ and hence $d_G$ is the pull-back of $\mu_{\B^2}$ under $\varphi$, it also shows that $(\B^2 , d_G)$ is Gromov hyperbolic for every bounded measurable $G$. We remark that Lind\'en speculates upon the possibility that $(U,\mu_U)$ is Gromov hyperbolic for general domains in higher dimensions, but observes that the problem is likely to be delicate. For example, H\"ast\"o \cite{Has06} showed that $j_U$ is not Gromov hyperbolic except in punctured space.

\section{Non-expansion and the Schwarz-Pick Lemma}

We start with a $G$ version of the Poletsky inequality.

\begin{lemma}
\label{lem:Gpol}
Let $f:U\to U'$ be non-constant quasiregular and $G,G'$ measurable conformal structures on $U,U'$ respectively with $f^*G' = G$ almost everywhere. Then for every path family $\Gamma$ in $U$, we have
\[ M_{G'}(f(\Gamma)) \leq M_G(\Gamma).\]
\end{lemma}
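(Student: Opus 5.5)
The plan is to adapt the standard proof of Poletsky's inequality \cite[Theorem II.8.1]{Rickman}, replacing Euclidean lengths by $G$-lengths. By Lemma \ref{lem:MGind} we may fix Borel representatives of $G$ and $G'$. Let $\eta'$ be $G'$-admissible for $f(\Gamma)$, with exceptional subfamily $\Gamma_0'$ of modulus zero. We build a density on $U$ by pulling back:
\[ \eta(x) = \eta'(f(x))\, J_f(x)^{1/n} \]
at points where $f$ is differentiable with $J_f(x)>0$, and $\eta = 0$ elsewhere. This is the $G$-analogue of the usual $\eta'(f(x))\,|Df(x)|$ pullback. The structure condition $f^*G' = G$ says exactly that
\[ |Df(x)v|_{G'(f(x))} = J_f(x)^{1/n}\,|v|_{G(x)} \]
for almost every $x$ and all $v$. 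So $J_f^{1/n}$ is the precise stretching factor in the $G$-norms, and no dilatation constant appears. This is why the inequality comes out with constant $1$ rather than $K_I$.

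The first step is the volume estimate. We have $\int_U \eta^n\,dm = \int_U (\eta'\circ f)^n J_f\,dm$. If $f$ is injective, the change of variables formula bounds this by $\int_{U'} \eta'^n\,dm$ directly. In general $f$ has multiplicity, and $\int (\eta'\circ f)^n J_f = \int \eta'^n N(y,f,U)\,dy$ can be larger. I would therefore follow Poletsky's method exactly. First restrict to a family of paths in a normal domain or a compact set $F$ on which $f$ is locally injective; the branch set has measure zero and its image has modulus-zero path families. Then decompose $U\setminus B_f$ into countably many disjoint Borel pieces $E_i$ on which $f$ is injective. For each path $\gamma$, use the lifting structure: $f\circ\gamma$ is a path in $f(\Gamma)$, and we need $\int_\gamma \eta\,|d\gamma|_G \ge \int_{f\circ\gamma}\eta'\,|d(f\circ\gamma)|_{G'}$. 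The multiplicity issue is handled as in Rickman by taking $\eta = \sup$ over branches of the pulled-back densities, $\eta(x) = \eta'(f(x))J_f(x)^{1/n}$ but estimating $\int\eta^n \le \int_{U'} \eta'^n\,dm$ only branch by branch.

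On reflection, the cleanest route is the one in \cite{Rickman}: prove the inequality in the form $M_{G'}(f(\Gamma)) \le \int \rho^n$ via $\rho(y) = \sup_{x\in f^{-1}(y)\cap F} \eta(x)\, J_f(x)^{-1/n}$, where $\eta$ is $G$-admissible for $\Gamma$. This is the pushforward direction, which is what Poletsky actually does. Concretely, take $\eta$ $G$-admissible for $\Gamma$ and define
\[ \rho(y) = \sup_{x\in f^{-1}(y)} \eta(x)\,J_f(x)^{-1/n}, \]
with the usual Borel-measurability fix via a countable injective decomposition. Then $\rho(y)^n \le \sum_{x\in f^{-1}(y)} \eta(x)^n J_f(x)^{-1}$, so change of variables on each injective piece gives $\int \rho^n \le \int \eta^n$. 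This direction has no multiplicity loss, because the sup is dominated by the sum.

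The second step is admissibility. For $\gamma\in\Gamma$ outside an exceptional family, $f\circ\gamma$ is locally rectifiable and $f$ is absolutely continuous on $\gamma$. Here I would use Fuglede's theorem and $(N)$, ACL$^n$ properties, following Rickman's proof with the branch set and the non-differentiability set removed via Lemma \ref{lem:null}. We then estimate
\[ \int_{f\circ\gamma} \rho\,|d(f\circ\gamma)|_{G'} \ge \int_\gamma \eta(x)\,J_f^{-1/n}\,|Df\,\gamma'|_{G'} = \int_\gamma \eta\,|d\gamma|_G \ge 1, \]
using the identity from $f^*G'=G$. The main obstacle is the exceptional families. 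We need the paths $\gamma$ along which $f$ fails to be absolutely continuous or hits bad sets to form an $M$-null family, and we need their images to be handled as part of $\Gamma_0'$ for $f(\Gamma)$, since $\rho$ only has to work off a null subfamily of $f(\Gamma)$. This is exactly where Definition \ref{def:Gadmiss} tolerates exceptional families and Lemma \ref{lem:Gcomp} identifies $M_G$-null with $M$-null, so Poletsky's/Väisälä's null-family arguments transfer verbatim. Taking the infimum over $\eta$ then gives $M_{G'}(f(\Gamma))\le M_G(\Gamma)$.
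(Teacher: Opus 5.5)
Your final route is essentially the paper's proof. It uses the fibrewise supremum $\rho(y)=\sup_{x\in f^{-1}(y)}\eta(x)J_f(x)^{-1/n}$ made measurable via a countable injective cover, $\sup^n\le\sum^n$ with the area formula to get $\int\rho^n\le\int\eta^n$, and admissibility from the identity $|Df(x)v|_{G'(f(x))}=J_f(x)^{1/n}|v|_{G(x)}$; you were right to discard the initial pull-back attempt, since it bounds $M_G(\Gamma)$ rather than $M_{G'}(f(\Gamma))$.

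The one point to fix is the exceptional families. Fuglede's theorem gives forward absolute continuity only off a null family in $U$, so it does not by itself control the image side. The paper, like Rickman, instead uses Poletsky's lemma: off a family $\Gamma_1$ with $M(f(\Gamma_1))=0$, $f$ is absolutely \emph{precontinuous} on $\gamma$, which also forces $\gamma$ to be locally rectifiable. It combines this with condition (N) to discard image paths meeting $f(E)$ in positive length. If you keep Fuglede, you must add the classical Poletsky inequality $M(f(\Gamma_0))\le K_I(f)M(\Gamma_0)=0$, both to make the image of the Fuglede exceptional family null and to discard non-locally-rectifiable $\gamma$ whose image is rectifiable.
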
 

\begin{proof}
The proof follows Rickman's proof of Poletsky's inequality \cite[Theorem II.8.1]{Rickman}, with the $K_I$-step replaced by an identity. We write $\ell(\gamma)$ for length, $\gamma^0$ for the arclength parametrization of a locally rectifiable path $\gamma$, and $s_{\gamma}$ for its length function, so that $\gamma = \gamma^0 \circ s_{\gamma}$.

{\it Step 1: null sets.} Let $B_f$ be the branch set of $f$. Then $|B_f| = |f(B_f)| = 0$, $f$ is differentiable almost everywhere with $J_f>0$ almost everywhere, and $f$ satisfies Lusin's condition (N), see \cite[Sections I.2 and I.4]{Rickman}. Let $N\subset U$ and $N'\subset U'$ be null sets outside which $\tilde{G} = G$ and $\tilde{G}' = G'$ respectively. Since $\int_{f^{-1}(N')} J_f \: dm \leq \int_{N'} N(y,f,U) \: dm(y) = 0$ by the area formula \cite[Lemma I.4.11]{Rickman} and $J_f>0$ almost everywhere, $f^{-1}(N')$ has measure zero. Let $E\subset U$ be a Borel null set containing $B_f$, $N$, $f^{-1}(N')$, the set where $f$ is not differentiable or $J_f = 0$, and the set where $f^*G' = G$ fails. Then $|E| = 0$ and, by condition (N), $|f(E)| = 0$.

{\it Step 2: the pointwise identity.} For $x\notin E$ and $v\in\R^n$, the invariance $Df(x)^TG'(f(x))Df(x)=J_f(x)^{2/n}G(x)$ gives
\[ |Df(x)v|_{G'(f(x))}^2 = \langle Df(x)^T G'(f(x)) Df(x) v , v \rangle = J_f(x)^{2/n} |v|_{G(x)}^2 .\]
Hence the minimal $G$-stretch
\[ \ell_G(Df(x)):=\min\{|Df(x)v|_{G'(f(x))}:|v|_{G(x)}=1\}\]
satisfies
\begin{equation}
\label{eq:lG}
|Df(x)v|_{G'(f(x))} = \ell_G(Df(x)) |v|_{G(x)} \text{ for all } v, \quad \text{and} \quad \ell_G(Df(x))^n=J_f(x), \qquad x\notin E .
\end{equation}

{\it Step 3: the pushed-forward density.} Let $\eta$ be $G$-admissible for $\Gamma$ with exceptional family $\Gamma_0$, and put $g = \eta \, J_f^{-1/n}$ on $U\setminus E$, a non-negative measurable function. Since $f$ is a local homeomorphism on the open set $U\setminus B_f$, which is second countable, there are open sets $A_1,A_2,\ldots$ covering $U\setminus B_f$ with $f|_{A_i}$ injective. Each $f|_{A_i\setminus E}$ is a continuous injection of a Borel set, so $f(A_i\setminus E)$ is Borel and $(f|_{A_i\setminus E})^{-1}$ is Borel measurable by the Lusin-Souslin theorem. Hence
\[ \eta_i'(y) = \begin{cases} g \big ( (f|_{A_i\setminus E})^{-1}(y) \big ), & y\in f(A_i \setminus E), \\ 0, & \text{otherwise}, \end{cases}\]
is measurable on $U'$, and so is
\[ \eta'(y) := \sup_i \eta_i'(y) = \sup \{ g(x) : x\in f^{-1}(y) \setminus E \} .\]
Choose a Borel function $\eta_0 \geq 0$ on $U'$ with $\eta_0 = \eta'$ outside a Borel null set $N''\subset U'$. Since $\eta'(y)^n \leq \sum_{x\in f^{-1}(y)\setminus E} g(x)^n$, the area formula \cite[Lemma I.4.11]{Rickman} and \eqref{eq:lG} give
\begin{equation}
\label{eq:mass}
\int_{U'} \eta_0^n \: dm = \int_{U'} (\eta')^n \: dm \leq \int_{U'} \sum_{x\in f^{-1}(y)\setminus E} g(x)^n \: dm(y) = \int_{U\setminus E} g^n J_f \: dm = \int_U \eta^n \: dm .
\end{equation}

{\it Step 4: exceptional families for $f(\Gamma)$.} Let $\Gamma_1\subset\Gamma$ be the family of paths $\gamma\in\Gamma$ such that $f\circ\gamma$ is locally rectifiable and $f$ is not absolutely precontinuous on $\gamma$ in the sense of \cite[Section II.5]{Rickman}; recall that this means that the path $\gamma^*$ defined by $\gamma = \gamma^* \circ s_{f\circ\gamma}$ and $f\circ \gamma^* = (f\circ\gamma)^0$ is not locally absolutely continuous; $\gamma^*$ is well defined because $f\circ\gamma$ is constant on an interval only if $\gamma$ is, $f$ being discrete. Poletsky's lemma \cite[Lemma II.5.3]{Rickman} states that $M(f(\Gamma_1)) = 0$. Next, $M(f(\Gamma_0)) \leq K_I(f) M(\Gamma_0) = 0$ by Poletsky's inequality. Finally, let $\Gamma_2'$ be the family of locally rectifiable paths in $U'$ meeting the null set $f(E) \cup N' \cup N''$ in positive length; $M(\Gamma_2') = 0$ by Lemma \ref{lem:null}. Put
\[ \Gamma_0' := f(\Gamma_0) \cup f(\Gamma_1) \cup \Gamma_2' ,\]
a family of modulus zero by countable subadditivity. We claim that $\int_{\beta} \eta_0 |d\beta|_{G'} \geq 1$ for every locally rectifiable $\beta \in f(\Gamma) \setminus \Gamma_0'$, which shows that $\eta_0$ is $G'$-admissible for $f(\Gamma)$; together with \eqref{eq:mass} and the infimum over $\eta$ this proves the lemma.

{\it Step 5: admissibility.} Let $\beta = f\circ \gamma$ be as in the claim, $\gamma\in\Gamma$. As $\beta\notin f(\Gamma_1)$, $f$ is absolutely precontinuous on $\gamma$, so $\gamma^*$ is locally absolutely continuous; in particular $\gamma$ is locally rectifiable, and $\gamma \notin \Gamma_0$ as $\beta \notin f(\Gamma_0)$. Since $\beta^0 = f\circ \gamma^*$ and $\beta \notin \Gamma_2'$, for almost every $t$ we have: $\gamma^*$ is differentiable at $t$, $\beta^0(t) \notin f(E)\cup N'\cup N''$, hence $\gamma^*(t) \notin E$, $f$ is differentiable at $\gamma^*(t)$ with
\[ (\beta^0)'(t) = Df(\gamma^*(t)) (\gamma^*)'(t), \qquad |(\beta^0)'(t)| = 1 ,\]
and $\eta_0(\beta^0(t)) = \eta'(\beta^0(t)) \geq g(\gamma^*(t))$. Using \eqref{eq:lG} at $x = \gamma^*(t)$,
\begin{align*} \int_{\beta} \eta_0 \: |d\beta|_{G'} &= \int_0^{\ell(\beta)} \eta_0(\beta^0(t)) |(\beta^0)'(t)|_{\tilde{G}'(\beta^0(t))} \: dt \geq \int_0^{\ell(\beta)} g(\gamma^*(t)) \, \ell_G ( Df (\gamma^*(t))) \, |(\gamma^*)'(t)|_{\tilde{G}(\gamma^*(t))} \: dt \\ &= \int_0^{\ell(\beta)} \eta(\gamma^*(t)) |(\gamma^*)'(t)|_{\tilde{G}(\gamma^*(t))} \: dt .\end{align*}
For a locally absolutely continuous path $\alpha$ and Borel $\eta\geq 0$ one has $\int_{\alpha} \eta \: |d\alpha|_G = \int \eta(\alpha(t)) |\alpha'(t)|_{\tilde{G}(\alpha(t))} \: dt$, by the change of variables $\alpha = \alpha^0\circ s_{\alpha}$ with $s_\alpha$ absolutely continuous, exactly as in \cite[Theorem 4.1]{Vaisala} for the Euclidean length element. Applied to $\alpha = \gamma^*$, and since $\gamma = \gamma^*\circ s_{f\circ\gamma}$ with $s_{f\circ\gamma}$ continuous and non-decreasing, so that $\gamma$ and $\gamma^*$ have the same arclength parametrization, the last integral equals $\int_{\gamma} \eta \: |d\gamma|_G \geq 1$, because $\gamma\in\Gamma\setminus\Gamma_0$ is locally rectifiable. This proves the claim.
\end{proof}

\begin{lemma}
\label{lem:minlifts}
Let $f:U\to U$ be non-constant quasiregular and let $C\subset U$ be a continuum. Then $\Delta ( f(C) , \partial U ; U)$ is minorized by $f( \Delta ( C,\partial U ; U))$.
\end{lemma}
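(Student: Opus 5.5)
We must show: every path $\beta\in\Delta(f(C),\partial U;U)$ has a subpath of the form $f\circ\alpha$ with $\alpha\in\Delta(C,\partial U;U)$. The plan is to lift $\beta$ via the maximal path lifting property from Section \ref{ss:qrmaps}, starting at a preimage point in $C$, and show that the lift runs out to $\partial U$.

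First, set up the lift. Let $\beta:[a,b)\to U$ be a path with $\beta(a)\in f(C)$, and with $\beta(t)$ leaving every compact subset of $U$ as $t\to b$ (its closure meets $\partial U$ only at the terminal end; if $\beta$ is given on a closed interval with endpoint on $\partial U$, restrict to the half-open part). Pick $x_0\in C$ with $f(x_0)=\beta(a)$, and let $\alpha:[a,c)\to U$ be a maximal $f$-lift of $\beta$ starting at $x_0$, so that $f\circ\alpha=\beta|_{[a,c)}$.

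Next, split into two cases according to whether $c<b$ or $c=b$.

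\emph{Case $c<b$.} By the lifting property, $\dist(\alpha(t),\partial U)\to0$ as $t\to c$. So $\alpha\in\Delta(C,\partial U;U)$, and $f\circ\alpha=\beta|_{[a,c)}$ is a subpath of $\beta$, as required.

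\emph{Case $c=b$.} Here $\alpha$ is defined on all of $[a,b)$ and $f\circ\alpha=\beta$. We need $\alpha$ to approach $\partial U$. Suppose instead that $\alpha(t_k)\to x^*\in U$ along some sequence $t_k\to b$. Continuity of $f$ gives $\beta(t_k)\to f(x^*)\in U$. This contradicts $\beta$ leaving every compact subset of $U$, since $\beta$ accumulates only on $\partial U$. Hence every accumulation point of $\alpha$ as $t\to b$ lies in $\partial U$ (or at $\infty$, which does not occur for bounded $U$ such as $\B^n$, and in any case does not matter in the spherical convention). Therefore $\alpha\in\Delta(C,\partial U;U)$ and $f\circ\alpha=\beta$.

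In both cases $\beta$ has a subpath in $f(\Delta(C,\partial U;U))$, which is the claimed minorization.

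The main obstacle is interpretive rather than technical. We must make precise what ``joining $C$ to $\partial U$ in $U$'' means for open-ended paths, and use that convention consistently for both $\beta$ and $\alpha$. The natural choice is that all points except possibly the endpoint lie in $U$, with the path tending to $\partial U$. The case $c=b$ is exactly where properness of $f$ might seem necessary. It is not, because the contradiction comes from $f(x^*)$ being an interior point while $\beta$ escapes to the boundary. One also uses $f(U)\subset U$, so that lifts stay in $U$.
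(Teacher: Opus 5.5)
Your proposal is correct and matches the paper's proof: you take a maximal $f$-lift of $\beta$ from a preimage point in $C$, the case $c<b$ follows from the path lifting property, and the case $c=b$ follows by contradiction, because an interior accumulation point of $\alpha$ would force $\beta$ to accumulate at a point of $U$. The paper phrases the second case with a compact set $E\subset U$ and its compact image $f(E)$ rather than a single point $x^*$, but this is the same argument.
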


\begin{proof}
Let $\beta : [a,b) \to U$ lie in $\Delta ( f(C) , \partial U ; U)$ and pick $x_0 \in C$ with $f(x_0) = \beta (a)$. Let $\alpha :[a,c) \to U$ be a maximal $f$-lift of $\beta$ starting at $x_0$, using the path lifting property of Section \ref{ss:qrmaps}. If $c<b$, then $\dist (\alpha(t) , \partial U) \to 0$. If $c=b$, then also $\dist (\alpha(t) , \partial U) \to 0$, otherwise some $\alpha (t_j)$, where $t_j\to b$, stays in a compact subset $E$ of $U$ and then $\beta(t_j) = f(\alpha(t_j)) \in f(E)$, another compact subset of $U$. This contradicts $\dist (\beta(t) , \partial U) \to 0$. In both cases $\alpha \in \Delta(C,\partial U ; U)$ and $f\circ \alpha = \beta |_{[a,c)}$ is a subpath of $\beta$.
\end{proof}

We now come to our version of the Schwarz-Pick Lemma and proving Theorem \ref{thm:sp}. The argument for the main part is the same as Vuorinen's proof of $\mu_{f(U)} ( f(x),f(y) ) \leq K_I(f) \mu_U (x,y)$, see \cite[Lemma 3.1(1)]{Vuo85} or \cite[Theorem 15.36]{HKV}, which uses maximal liftings as in Lemma \ref{lem:minlifts}.

\begin{proof}[Proof of Theorem \ref{thm:sp}]
Given $\epsilon >0$, choose a continuum $C \ni x,y$ with $M_G( \Delta (C , \partial U ; U)) \leq d_G(x,y) + \epsilon$. Then $f(C)$ is a continuum containing $f(x),f(y)$, and by Lemmas \ref{lem:Gcomp}, \ref{lem:Gpol} and \ref{lem:minlifts} we have
\[ d_G(f(x),f(y)) \leq M_G ( \Delta ( f(C) , \partial U ; U)) \leq M_G( f( \Delta ( C,\partial U; U))) \leq M_G( \Delta (C,\partial U;U)) \leq d_G(x,y) + \epsilon.\]
As $\epsilon $ was arbitrary, the first claim follows. Note that $(f^m)^*G = G$ follows from $f^*G = G$ by the chain rule, so every iterate of $f$ is also $d_G$-non-expanding. For~(a), if $f$ is a homeomorphism, then $f^{-1}$ is quasiconformal, hence differentiable almost everywhere and satisfying Lusin's condition (N), so the identity $f^*G = G$ at $x$, transported to $y = f(x)$, gives $(f^{-1})^*G = G$ almost everywhere; both $f$ and $f^{-1}$ are then non-expanding by \eqref{eq:sp1}. For~(b), if $f(x) = f(y)$, then $d_G(x,y) = d_G(f(x),f(y)) = 0$ and hence $x=y$ and $f$ is injective. For~(c), we can take any $x\neq y$ with $f(x) = f(y)$ and obtain the conclusion.
\end{proof}

\section{Chain metrics: hyperbolicity without hyperbolicity of $d_G$}

In this section we construct the chain metric built from $d_G$ that we will show is Gromov hyperbolic. We set $a:=\lambda^n 2^{n-1} c_n$ and $r_1:= 3a \log 4$.

\begin{definition}
\label{def:chain}
Let $\Psi$ be the class of continuous non-decreasing functions $\psi : [0,\infty) \to [0, \infty)$ with $\psi(0)=0$ and $\psi(u) \geq u/2$ for all $u$. For $r>0$, an $r$-chain from $x$ to $y$ is a finite sequence $x=z_0,z_1,\ldots, z_m = y$ in $\B^n$ with $d_G(z_i,z_{i+1}) \leq r$ for all $i$. For $\psi \in \Psi$ put
\[ d_r^{\psi}(x,y) := \inf \left \{ \sum_{i=0}^{m-1} \psi \left ( d_G(z_i,z_{i+1}) \right ) : (z_i) \text{ is an $r$-chain from $x$ to $y$} \right \}. \]
\end{definition}

We will use two versions of a chain metric:
\begin{itemize}
\item the {\it plain chain metric} $d_r = d_r^{\operatorname{id}}$, that is, $\psi(u) = u$,
\item the {\it hyperbolized chain metric} $d_r^h:= d_r^{\psi_h}$ with
\[ \psi_h(u) := \max \{ 2^{n-1}c_n \Phi^{-1}(u) , u/2 \} ,\]
where $\Phi(t) = \gamma_n ( 1/ \tanh(t/2) )$ is the function from Section \ref{ss:metrics} so that $\mu_{\B^n} = \Phi \circ \rho$.
\end{itemize}

For brevity, we will write $d:= d_r^h$ with a fixed $r\geq r_1$ and call it {\it the chain metric}. The floor of $u/2$ in $\psi_h$ is there so that the H\"older-type distortion of $d_G$ at small scales doesn't make chains of very small steps have even smaller $d_G$-sums that collapse to $0$ in the infimum. We also point out that $r$-chains exist between any two points $x$ and $y$ in $\B^n$: just join $x$ to $y$ by a Euclidean path and subdivide it finely, using Proposition \ref{prop:top}~(a) to see that $d_G$ is continuous for the Euclidean topology.

We collect basic properties of the chain metric in the following lemma.

\begin{lemma}
\label{lem:chainprop}
Let $\psi \in \Psi$ and $r\geq r_1$.
\begin{enumerate}[(a)]
\item The expression $d_r^{\psi}$ is a metric, $d_r^{\psi} \geq \tfrac12 d_r \geq \tfrac12 d_G$, and $d_r^{\psi}(x,y) \leq \psi(d_G(x,y))$ whenever $d_G(x,y) \leq r$. For $\psi = \operatorname{id}$, $d_r = d_G$ on such pairs.
\item The metric $d_r^{\psi}$ induces the Euclidean topology, $(\B^n , d_r^{\psi})$ is proper, $d_r^{\psi}$-bounded sets are precisely the relatively compact subsets of $\B^n$.
\item We have 
\[ \tfrac12 \lambda^{-n} 2^{n-1}c_n \rho \leq d_r^{\psi} \leq \left ( \frac{ \rho }{\log 4} + 1 \right ) \psi(r).\]
\item If $g:\B^n \to \B^n$ is non-expanding for $d_G$, then $g$ is non-expanding for $d_r^{\psi}$. If $g$ is a bijective $d_G$-isometry, then it is a $d_r^{\psi}$-isometry.
\end{enumerate}
\end{lemma}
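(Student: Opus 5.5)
I will verify the four parts in order, working directly from the infimum over $r$-chains and the comparisons already available: \eqref{eq:mucomps}, Proposition \ref{prop:top} and Lemma \ref{lem:roughhyp}.

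For~(a), symmetry comes from reversing chains. For the triangle inequality, concatenate an $r$-chain from $x$ to $y$ with one from $y$ to $z$; the sums add, and taking infima gives the inequality. If $d_G(x,y)\le r$, the two-point chain $x,y$ is admissible, so $d_r^{\psi}(x,y)\le \psi(d_G(x,y))$; for $\psi=\operatorname{id}$ we get $d_r(x,y)\le d_G(x,y)$. For the lower bounds, $\psi(u)\ge u/2$ gives $\sum\psi(d_G(z_i,z_{i+1}))\ge \tfrac12\sum d_G(z_i,z_{i+1})$, so $d_r^\psi\ge \tfrac12 d_r$. The triangle inequality for $d_G$ bounds any chain sum below by $d_G(x,y)$, so $d_r\ge d_G$. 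Combined with the previous bound, this also gives $d_r=d_G$ on pairs with $d_G\le r$. Positivity then follows from $d_G$ being a metric.

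For~(c), the lower bound is immediate from $d_r^\psi\ge\frac12 d_G$ and Lemma \ref{lem:roughhyp}. For the upper bound, I would take the hyperbolic geodesic from $x$ to $y$ and divide it into $m=\lceil \rho(x,y)/\log 4\rceil\le \rho/\log4+1$ consecutive pieces, each of hyperbolic length at most $\log 4$. By Lemma \ref{lem:roughhyp}, consecutive points satisfy $d_G\le a(\log4+\log4)=2a\log4\le r_1\le r$. This is exactly where the choice $r_1=3a\log4$ is used, with room to spare. The chain is therefore an $r$-chain, and since $\psi$ is non-decreasing each step costs at most $\psi(r)$. The total is at most $m\,\psi(r)$, which is the claimed bound.

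Part~(b) then follows from~(c) together with the local comparison. Near a point, $d_r^\psi(x,y)\le\psi(d_G(x,y))\to0$ as $y\to x$ in the Euclidean sense, by continuity of $\psi$, $\psi(0)=0$ and Proposition \ref{prop:top}(a). Conversely, $d_r^\psi\ge\tfrac12 d_G$, so Proposition \ref{prop:top}(a) gives that $d_r^\psi$-convergence implies Euclidean convergence. Hence the two topologies agree. By~(c), $d_r^\psi$-bounded sets coincide with $\rho$-bounded sets, which are the relatively compact sets. Properness follows because closed bounded sets are then closed (in the same topology) and relatively compact, hence compact. The main subtlety I expect in the whole lemma is this topology step: $d_G$ is not Lipschitz to the Euclidean metric, so we genuinely need the continuity of $\psi$ at $0$ rather than any linear bound.

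For~(d), if $g$ is $d_G$-non-expanding, it maps an $r$-chain $(z_i)$ to an $r$-chain $(g(z_i))$ from $g(x)$ to $g(y)$. Since $\psi$ is non-decreasing, $\psi(d_G(g(z_i),g(z_{i+1})))\le\psi(d_G(z_i,z_{i+1}))$, and taking infima gives $d_r^\psi(g(x),g(y))\le d_r^\psi(x,y)$. If $g$ is a bijective $d_G$-isometry, then $g^{-1}$ is also a $d_G$-isometry, so both $g$ and $g^{-1}$ are $d_r^\psi$-non-expanding, and $g$ is a $d_r^\psi$-isometry.
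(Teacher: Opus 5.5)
Your proposal is correct and follows the paper's argument in all four parts: chain reversal, concatenation and $\psi(u)\ge u/2$ for (a); subdividing the $\rho$-geodesic into pieces of length at most $\log 4$, using $2a\log 4\le r_1$, for (c); continuity of $\psi$ at $0$ for the topology in (b); and pushing chains forward under $g$ and $g^{-1}$ for (d). The only small difference is that you derive the bounded-set characterization in (b) from (c), which makes explicit the direction (relatively compact implies $d_r^{\psi}$-bounded) that the paper leaves implicit; you also note that (d) needs $\psi$ to be non-decreasing, which the paper uses without comment.
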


\begin{proof}
For~(a), symmetry and the triangle inequality come from reversing and concatenating chains. As $\psi(u) \geq u/2$, every chain sum is at least half the corresponding plain sum, which is at least $d_G(x,y)$ by the triangle inequality for $d_G$. This gives $d_r^{\psi} \geq \tfrac12 d_r \geq \tfrac12 d_G$, in particular positivity. When $d_G(x,y) \leq r$, the one-step chain gives the upper bound.

For~(b), by ~(a) and the continuity of $\psi$ at $0$, we have $d_r^{\psi}(x_j,x) \to 0$ if and only if $d_G(x_j,x) \to 0$, so the topologies agree. Since $d_r^{\psi} \geq \tfrac12 d_G$, the $d_r^{\psi}$-bounded sets are $d_G$-bounded and hence relatively compact by Proposition \ref{prop:top}~(b). Closed $d_r^{\psi}$-balls are closed in the Euclidean sense and relatively compact, hence compact.

For~(c), the lower bound is part~(a) and Lemma \ref{lem:roughhyp}. For the upper bound, let $t = \rho(x,y)$ and choose points $z_0 = x,\ldots, z_m=y$ on the $\rho$-geodesic from $x$ to $y$ with $\rho(z_i,z_{i+1}) \leq \log 4$ and $m\leq t/\log 4+1$. By Lemma \ref{lem:roughhyp}, 
\[ d_G (z_i,z_{i+1}) \leq a (\log 4 + \log 4) = 2a \log 4 \leq r,\]
so this is an $r$-chain and each step gives at most $\psi(r)$.

Finally, for~(d), if $(z_i)$ is an $r$-chain from $x$ to $y$ then $(g(z_i))$ is a chain from $g(x)$ to $g(y)$ with $d_G(g(z_i) , g(z_{i+1})) \leq d_G(z_i,z_{i+1}) \leq r$, and hence is an $r$-chain. Taking the infimum and applying this observation to $g$ and $g^{-1}$ gives the result.
\end{proof}

We conclude from Lemma \ref{lem:chainprop} that the identity map $\iota : (\B^n , d_r^{\psi}) \to (\B^n , \rho)$ is an $(L,C)$-quasi-isometry, that is, $L^{-1} d_r^{\psi} - C \leq \rho \leq L d_r^{\psi} + C$, with $L = \max \{ \psi(r)/\log 4, 2^{2-n}\lambda^n  / c_n , 1 \}$ and $C = \psi(r)$. Moreover, by Theorem \ref{thm:sp} and Lemma \ref{lem:chainprop}~(d), every uniformly quasiregular map $f:\B^n \to \B^n$ is $d_r^{\psi}$-non-expanding.

\begin{proposition}
\label{prop:roughgeos}
The space $(\B^n , d_r^{\psi})$ is $h$-roughly geodesic with $h = \psi(r) +1$. That is, for all $x,y$ there is a rough geodesic $\sigma :[0,S] \to \B^n$ with $\sigma(0)=x$, $\sigma(S) = y$, $S\leq d_r^{\psi}(x,y)+1$, and
\[ \left |d_r^{\psi} (\sigma(s) , \sigma(t) ) - |s-t| \right | \leq h,\]
for $s,t\in [0,S]$.
\end{proposition}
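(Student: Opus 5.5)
Fix $x,y$ and write $D=d_r^{\psi}(x,y)$. The plan is to take a nearly optimal chain and parametrize it by cumulative $\psi$-cost. The rough-geodesic inequality then follows from near-optimality of the chain on each sub-chain, plus a bounded error coming from the individual steps.

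First, by Definition \ref{def:chain}, choose an $r$-chain $x=z_0,\dots,z_m=y$ with
\[ \sum_{i=0}^{m-1}\psi(d_G(z_i,z_{i+1}))\le D+1. \]
Put $t_0=0$ and $t_k=\sum_{i<k}\psi(d_G(z_i,z_{i+1}))$, and let $S=t_m\le D+1$. Define $\sigma(s)=z_k$ for $s\in[t_k,t_{k+1})$ and $\sigma(S)=y$. No continuity is needed, since rough geodesics are just maps. Each increment satisfies $t_{k+1}-t_k=\psi(d_G(z_k,z_{k+1}))\le\psi(r)$, because $\psi$ is non-decreasing and $d_G(z_k,z_{k+1})\le r$.

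\emph{Upper bound.} For $s\le t$ with $\sigma(s)=z_j$ and $\sigma(t)=z_k$, the sub-chain $z_j,\dots,z_k$ is itself an $r$-chain, so
\[ d_r^{\psi}(z_j,z_k)\le t_k-t_j\le t-s+(s-t_j)\le |t-s|+\psi(r). \]

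\emph{Lower bound.} Here near-optimality is used. By the triangle inequality for $d_r^{\psi}$ (Lemma \ref{lem:chainprop}(a)),
\[ D\le d_r^{\psi}(x,z_j)+d_r^{\psi}(z_j,z_k)+d_r^{\psi}(z_k,y)\le t_j+d_r^{\psi}(z_j,z_k)+(S-t_k), \]
where the sub-chain bounds give the first and last terms. Since $S\le D+1$, this yields $d_r^{\psi}(z_j,z_k)\ge t_k-t_j-1$. Moreover $t_k\ge t-\psi(r)$ and $t_j\le s$, so
\[ d_r^{\psi}(\sigma(s),\sigma(t))\ge t-s-\psi(r)-1. \]
Combining the two bounds gives $\bigl|d_r^{\psi}(\sigma(s),\sigma(t))-|s-t|\bigr|\le\psi(r)+1=h$.

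The main delicacy is careful bookkeeping: the endpoint conventions, in particular $\sigma(S)=y$ and the case $s$ or $t=S$, and making sure the error terms combine to $\psi(r)+1$ rather than $2\psi(r)+1$. In the lower bound only the loss at $t$ (rounding down to $t_k$) matters, while $t_j\le s$ loses nothing. I would check the degenerate case $x=y$ separately, with $S=0$ and $\sigma\equiv x$.
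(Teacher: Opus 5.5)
Your proposal is correct and is essentially the paper's proof: a chain with cost at most $d_r^{\psi}(x,y)+1$, parametrized by its partial sums, with the sub-chain giving the upper bound and the triangle inequality giving the lower bound with loss $1$, plus a loss of $\psi(r)$ from the step sizes. The only cosmetic difference is that the paper discards repeated consecutive points so that $S_i<S_{i+1}$, whereas you let the resulting empty intervals $[t_k,t_{k+1})$ take care of themselves; your observation that $t_j<t_{k+1}$ forces $j\le k$ covers this.
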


\begin{proof}
Write $d' = d_r^{\psi}$. Choose an $r$-chain $(z_i)_{i=0}^m$ from $x$ to $y$ with 
\[ S:= \sum_{i=0}^{m-1} \psi \left ( d_G(z_i,z_{i+1}) \right ) \leq d'(x,y) +1 ,\]
and let $S_i$ be the partial sums $0=S_0 \leq S_1 \leq \ldots \leq S_m =S$, with $S_{i+1} - S_i \leq \psi(r)$. After discarding repeated consecutive points we may assume $S_i < S_{i+1}$. Define $\sigma(s) = z_i$ for $s\in [S_i,S_{i+1})$ and $\sigma(S) = y$. For $i\leq j$ the sub-chain gives $d'(z_i,z_j) \leq S_j-S_i$, and
\[ d'(x,y) \leq d'(x,z_i) + d'(z_i,z_j) + d'(z_j,y) \leq S_i + d'(z_i,z_j) + (S-S_j) \]
gives $d'(z_i,z_j) \geq S_j-S_i-1$. If $s\in [S_i,S_{i+1})$, $t\in [S_j,S_{j+1})$, $i\leq j$, then $|t-s|$ is within $\psi(r)$ of $S_j-S_i$. Hence
\[ |d'(\sigma(s) , \sigma (t)) - |t-s|| \leq \psi(r)+1.\]
The endpoint $t=S$ is handled the same way.
\end{proof}

We now come to the main result of this section.

\begin{theorem}
\label{thm:gromovchain}
Let $\psi \in \Psi$ and $r\geq r_1$. Then $(\B^n , d_r^{\psi})$ satisfies the four-point Gromov hyperbolic condition, and a sequence $(z_k)$ is a Gromov sequence for $d_r^{\psi}$ if and only if it is one for $\rho$, with two Gromov sequences equivalent for $d_r^{\psi}$ if and only if they are equivalent for $\rho$. Consequently \eqref{eq:gromov} holds and the Gromov boundary of $(\B^n , d_r^{\psi})$ is $\S^{n-1}$ in the sense of Section \ref{ss:metrics}.
\end{theorem}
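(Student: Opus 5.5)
The plan is to transfer Gromov hyperbolicity from $(\B^n,\rho)$ to $(\B^n,d_r^{\psi})$ through the quasi-isometry $\iota$ recorded after Lemma \ref{lem:chainprop}. The transfer uses the rough geodesics of Proposition \ref{prop:roughgeos}, following V\"ais\"al\"a's argument \cite[Theorem 3.18]{Vai05}. Quasi-isometries do not preserve the four-point condition in general, but they do between roughly geodesic spaces. The proof of that fact runs through stability of quasi-geodesics (a Morse lemma), so I would first adapt this stability statement from intrinsic to $h$-roughly geodesic spaces.

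\textbf{Step 1: thin triangles in $d_r^{\psi}$.} Let $x,y,z\in\B^n$ and take $h$-rough geodesics $\sigma_{xy},\sigma_{yz},\sigma_{xz}$ from Proposition \ref{prop:roughgeos}. Composed with $\iota$, each is an $(L,C')$-quasi-geodesic in $(\B^n,\rho)$ with constants depending only on $L,C,h$. The lower bound $d_r^{\psi}(\sigma(s),\sigma(t))\ge |s-t|-h$ translates via $\rho\ge L^{-1}d_r^{\psi}-C$. The upper bound is the delicate point: these curves are piecewise constant, with jumps of $d_r^{\psi}$-size at most $\psi(r)$, hence of $\rho$-size at most $L\psi(r)+C$. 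Such jumps are harmless for the discrete form of the Morse lemma, which only needs coarse quasi-geodesics. The Morse lemma in the $\log 3$-hyperbolic space $(\B^n,\rho)$ then places each $\iota\circ\sigma$ within $\rho$-Hausdorff distance $R=R(L,C,h)$ of the $\rho$-geodesic with the same endpoints. Since $\rho$-geodesic triangles are $\delta_0$-thin, every point of $\sigma_{xy}$ is within $\rho$-distance $2R+\delta_0$ of $\sigma_{yz}\cup\sigma_{xz}$, and so within $d_r^{\psi}$-distance $L(2R+\delta_0)+LC$ of it. Rough triangles are therefore uniformly thin.

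\textbf{Step 2: thinness implies the four-point condition.} The standard equivalence between thin triangles and the four-point condition goes through with additive errors of order $h$. The key estimate is that $(x|y)_o$ agrees with $d_r^{\psi}(o,\sigma_{xy})$ up to a constant depending on $h$ and the thinness constant. I would verify this directly, as in \cite[Ch.\ III.H.1]{BH99}, keeping track of the extra $h$ terms. This yields $\delta$ depending on $n,\lambda,r,\psi(r)$.

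\textbf{Step 3: Gromov sequences and the boundary.} By Lemma \ref{lem:chainprop}(c), $d_r^{\psi}(o,z_k)\to\infty$ if and only if $\rho(o,z_k)\to\infty$. For the Gromov products, Step 2's estimate gives $(x|y)_o\approx d(o,\sigma_{xy})$ in both metrics. The Morse lemma of Step 1 puts $\sigma_{xy}$ within bounded $\rho$-distance of the $\rho$-geodesic $[x,y]$, so the quasi-isometry gives $L^{-1}(x|y)^{\rho}_o-C''\le (x|y)^{d}_o\le L(x|y)^{\rho}_o+C''$. Hence one product tends to infinity exactly when the other does, so Gromov sequences and their equivalence coincide for the two metrics. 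The conclusion \eqref{eq:gromov} and the identification of the boundary with $\S^{n-1}$ then follow from the classical facts for $\rho$ quoted in Section \ref{ss:metrics}.

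I expect the main obstacle to be Step 1: stating and justifying the Morse lemma for the discontinuous rough geodesics $\sigma$. I would first try to replace each $\sigma$ by a continuous curve, joining consecutive $z_i$ by $\rho$-geodesic segments of length at most $L\psi(r)+C$. The result is a continuous $(L,C'')$-quasi-geodesic in $\rho$, to which \cite[Theorem III.H.1.7]{BH99} applies directly.
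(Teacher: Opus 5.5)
Your proposal is correct and follows the paper's proof essentially step for step. The shared steps are: Morse-lemma stability of $\iota\circ\sigma$ in $(\B^n,\rho)$, giving uniformly slim rough triangles; the estimate $(x|y)_w \approx d_r^{\psi}(w,\sigma_{xy})$ up to constants in $h$ and the slimness constant, from which the four-point condition follows; and the linear comparison of Gromov products via $\rho(o,[z_k,z_m])$ and $\rho(o,\sigma_{km})$. Your worry about discontinuity is unnecessary: quasi-geodesics in \cite[Theorem III.H.1.7]{BH99} need not be continuous, the taming step you describe is already built into its proof, and the paper applies it directly for this reason.
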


This theorem is a modest extension of \cite[Theorem 3.18]{Vai05} from intrinsic to roughly geodesic spaces. This result of V\"ais\"al\"a does not apply in our situation as $d_G$ does not contain short arcs between Euclidean-close points.

\begin{proof}
Write $X = (\B^n,d')$ with $d' = d_r^{\psi}$, $Y = (\B^n , \rho)$, $h = \psi(r)+1$, and let $\delta_0$ be a Gromov hyperbolicity constant for the geodesic space $Y$; for $Y = (\B^n,\rho)$ one may take $\delta_0 = \log 3$.

Let $\sigma:[0,S] \to X$ be an $h$-rough geodesic. Then $\iota \circ \sigma$ satisfies
\[ L^{-1} |s-t| - C' \leq \rho (\sigma(s),\sigma(t) ) \leq L|s-t| + C'\]
with $C' = C+Lh$, that is, it is an $(L,C')$-quasi-geodesic in $Y$, not necessarily continuous. By \cite[Theorem III.H.1.7]{BH99} (the stability of quasi-geodesics, sometimes called the Morse Lemma), its image lies within $\rho$-Hausdorff distance $H = H(L,C',\delta_0)$ of the $\rho$-geodesic joining its endpoints. Hence, given three points and $h$-rough geodesics $\sigma_1,\sigma_2,\sigma_3$ joining them pairwise, each $\iota(\sigma_i)$ lies in the $\rho$-neighbourhood of radius $\delta_0 + 2H$ of $\iota(\sigma_j)\cup \iota ( \sigma_k)$, because the corresponding $\rho$-geodesic triangle is $\delta_0$-slim. Pulling back through the quasi-isometry, each $\sigma_i$ lies in the $d'$-neighbourhood of radius $\delta_1:= L(\delta_0 + 2H) +C$ of $\sigma_j\cup \sigma_k$. So rough geodesic triangles in $X$ are slim.

Let $w\in X$ and let $\sigma$ be an $h$-rough geodesic from $x$ to $y$. Then we claim that
\begin{equation}
\label{eq:gromovchain1}
(x|y)_w - h -1 \leq d'(w,\sigma) \leq (x|y)_w + \kappa, \quad \kappa = 2\delta_1+8h .
\end{equation}
For the lower bound, for $p=\sigma(s)$,
\[ d'(x,p) + d'(p,y) \leq S+2h \leq d'(x,y) + 2h+1,\]
and adding $d'(w,p) \geq d'(w,x) - d'(x,p)$ to $d'(w,p) \geq d'(w,y) - d'(p,y)$ gives $d'(w,p) \geq (x|y)_w - h -\tfrac12$. For the upper bound, let $\sigma_{xw}, \sigma_{wy}$ be $h$-rough geodesics and choose $p\in \sigma$ with $|d'(x,p) - (y|w)_x| \leq 2h$. This is possible as $s\mapsto d'(x,\sigma(s))$ is within $h$ of $s$ and $S \geq d'(x,y) - h$. As rough geodesic triangles in $X$ are slim, $p$ is within $\delta_1$ of some $q$ on $\sigma_{xw}$ or on $\sigma_{wy}$. In the first case, the rough geodesic property of $\sigma_{xw}$ gives 
\[ d'(w,q) \leq d'(x,w) - d'(x,q) +3h \leq d'(x,w) - d'(x,p) +\delta_1 +3h \leq d'(x,w) - (y|w)_x + \delta_1 +5h = (x|y)_w + \delta_1+5h,\]
using $d'(x,w) - (y|w)_x = (x|y)_w$. So $d'(w,p) \leq (x|y)_w + 2\delta_1 + 5h$. In the second case, $q\in \sigma_{wy}$, the same argument with the roles of $x$ and $y$ exchanged uses $d'(y,p) \geq S - s - h \geq d'(x,y) - s - 2h \geq (x|w)_y - 5h$, where $s \leq d'(x,p) + h \leq (y|w)_x + 3h$, and $d'(y,w) - (x|w)_y = (x|y)_w$, giving $d'(w,p) \leq (x|y)_w + 2\delta_1 + 8h$.

Next, let $x,y,z,w \in X$ and choose $p$ on an $h$-rough geodesic $\sigma_{xz}$ with $d'(w,p) \leq (x|z)_w + \kappa+1$ by \eqref{eq:gromovchain1}. Then $p$ is within $\delta_1$ of some $q \in \sigma_{xy} \cup \sigma_{yz}$, say $q\in \sigma_{xy}$. Then by \eqref{eq:gromovchain1},
\[ (x|y)_w \leq d'(w,\sigma_{xy}) + h+1 \leq d'(w,q) + h +1 \leq d'(w,p) +\delta_1 + h+1 \leq (x|z)_w + \kappa +\delta_1 +h+2.\] 
The case $q\in \sigma_{yz}$ gives the same bound for $(y|z)_w$. Hence
\[ (x|z)_w \geq \min \{ (x|y)_w , (y|z)_w \} - \delta, \quad \delta :=\kappa+\delta_1 + h+2,\]
showing the four-point Gromov hyperbolicity condition.

Now, let $(z_k)$ be any sequence in $\B^n$, let $\sigma_{km}$ be an $h$-rough geodesic in $X$ from $z_k$ to $z_m$ and $[z_k,z_m]$ the $\rho$-geodesic. In $Y$, the geodesic case $h=0$, $\delta_1 = \delta_0$ of \eqref{eq:gromovchain1} gives
\[ \rho ( o, [z_k,z_m]) \geq (z_k|z_m)_o^{\rho} \geq \rho ( o, [z_k,z_m]) - 2\delta_0.\]
By the stability of quasi-geodesics, the $\rho$-Hausdorff distance between $[z_k,z_m]$ and $\iota(\sigma_{km})$ is at most $H$, so 
\[ | \rho ( o , [z_k,z_m]) - \rho ( o ,\sigma_{km} ) | \leq H.\]
By the quasi-isometry inequalities and \eqref{eq:gromovchain1}, we have
\[ \frac{ d'(o,\sigma_{km}) - C}{L} \leq \rho(o,\sigma_{km}) \leq L d'(o,\sigma_{km}) + C, \qquad (z_k|z_m)_o^{d'} - h - 1 \leq d'(o,\sigma_{km}) \leq (z_k|z_m)_o^{d'} + \kappa .\]
Combining the three displays,
\[ \frac{ (z_k|z_m)_o^{d'} - h - 1 - C }{L} - H - 2\delta_0 \leq (z_k|z_m)_o^{\rho} \leq L (z_k|z_m)_o^{d'} + L\kappa + C + H ,\]
and $d'(o,z_k) \to \infty$ if and only if $\rho(o,z_k) \to \infty$. Hence $(z_k)$ is a Gromov sequence for $d'$ if and only if it is one for $\rho$, and applying the same estimate to the pairs $(z_k,w_k)$ shows that two Gromov sequences are equivalent for $d'$ if and only if they are equivalent for $\rho$. The remaining claims now follow from the corresponding classical facts for $(\B^n,\rho)$ recalled in Section \ref{ss:metrics}.
\end{proof}

The proof requires that $Y = (\B^n , \rho)$ is a proper geodesic Gromov hyperbolic space, quasi-isometric to $(\B^n , d_r^{\psi})$ via the identity, whose Gromov sequences converge in $\overline{\B^n}$ to points of $\S^{n-1}$ with equivalent sequences having the same limit. Any proper geodesic Gromov hyperbolic metric on a domain $U$ with these properties can replace it. This will be our strategy when we come to Theorem \ref{thm:stable}.

Next, we point out that for the standard structure, the hyperbolized chain metric recovers the hyperbolic metric.

\begin{proposition}
\label{prop:hypchain}
Let $G \equiv I$ and $r\geq r_1$. Then $d_r^h \geq 2^{n-1}c_n \rho$ everywhere and, whenever $\rho(x,y) \geq \log 4$,
\[ d_r^h(x,y) = 2^{n-1}c_n \rho(x,y).\]
\end{proposition}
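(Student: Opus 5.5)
With $G\equiv I$ we have $d_G=\mu_{\B^n}=\Phi\circ\rho$. Write $\kappa:=2^{n-1}c_n$. The proposal is to compare each step cost $\psi_h(d_G(z_i,z_{i+1}))$ with $\kappa\,\rho(z_i,z_{i+1})$, and then sum along chains. For the lower bound, fix any $r$-chain $(z_i)$ from $x$ to $y$. Since $d_G(z_i,z_{i+1})=\Phi(\rho(z_i,z_{i+1}))$, the definition of $\psi_h$ gives
\[ \psi_h(d_G(z_i,z_{i+1})) \geq \kappa\,\Phi^{-1}\big(\Phi(\rho(z_i,z_{i+1}))\big) = \kappa\,\rho(z_i,z_{i+1}). \]
Summing and using the triangle inequality for $\rho$, every chain sum is at least $\kappa\rho(x,y)$. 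Taking the infimum gives $d_r^h\geq \kappa\rho$ everywhere.

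For the reverse inequality when $t=\rho(x,y)\geq\log 4$, I will use chains along the $\rho$-geodesic from $x$ to $y$. Subdivide it into $m$ equal pieces of $\rho$-length $t/m$. Two things must hold for each step. First, it must be admissible, i.e.\ $\Phi(t/m)\leq r$. Second, the maximum in $\psi_h$ must be attained by the first term, i.e.\ $\kappa\, t/m \geq \Phi(t/m)/2$. If both hold, the chain sum is exactly $m\kappa (t/m)=\kappa t$, which gives $d_r^h(x,y)\leq\kappa\rho(x,y)$ and hence equality.

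The main task is to find a window of step lengths $u$ satisfying $\Phi(u)\leq \min\{r, 2\kappa u\}$, and to show that some $u=t/m$ lands in it.
\begin{itemize}
\item For the $r$-constraint, Anderson's upper bound \eqref{eq:cn}, in the form of Lemma \ref{lem:roughhyp} with $\lambda=1$, gives $\Phi(u)\leq \kappa(u+\log 4)$. So $\Phi(u)\leq r$ holds whenever $u\leq 2\log 4$, since $r\geq r_1=3\kappa\log 4$ when $\lambda=1$.
\item For the floor constraint, the same bound gives $\Phi(u)\leq \kappa(u+\log 4)\leq 2\kappa u$ as soon as $u\geq\log 4$.
\end{itemize}
Hence every $u\in[\log 4,2\log 4]$ works. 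Since $t\geq\log 4$, the interval $[t/(2\log 4),\,t/\log 4]$ has length at least $1$ in the variable $m$ when $t\geq 2\log 4$, so it contains an integer $m\geq1$. When $\log 4\leq t<2\log 4$, simply take $m=1$ and $u=t$.

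The only delicate point I anticipate is this calibration of constants. The factor $1/2$ in the floor $u/2$ must match Anderson's additive constant $\kappa\log 4$, and the threshold $\log 4$ in the statement is exactly where the additive term is absorbed into $\kappa u$. I expect the choice $r_1=3a\log 4$ was made so that steps of $\rho$-length up to $2\log 4$ are allowed. No Gromov-type or modulus argument should be needed beyond \eqref{eq:cn}.
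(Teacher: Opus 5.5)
Your proof is correct and follows the paper's argument. It uses the same stepwise lower bound $\psi_h(\Phi(\rho_i))\ge 2^{n-1}c_n\rho_i$, and the same upper bound by equal subdivisions of the $\rho$-geodesic, with Lemma \ref{lem:roughhyp} giving both admissibility and the fact that the maximum in $\psi_h$ is attained by the first term once the step length is at least $\log 4$. The only cosmetic difference is that the paper uses a one-step chain when $t\le s_r=\Phi^{-1}(r)$ and pieces of length in $[s_r/2,s_r]$ otherwise, whereas you use the fixed window $[\log 4,2\log 4]$; both work.
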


\begin{proof}
For $G\equiv I$, we have $d_G = \Phi \circ \rho$. Every step of the chain gives 
\[ \psi_h ( \Phi (\rho_i)) \geq 2^{n-1}c_n \Phi^{-1} ( \Phi ( \rho_i)) = 2^{n-1}c_n \rho_i,\]
so every chain has sum at least $2^{n-1}c_n \sum_i \rho_i \geq 2^{n-1}c_n \rho(x,y)$. For the reverse inequality, first note that $\psi_h(\Phi (s)) = 2^{n-1}c_n s$ as soon as $2^{n-1}c_n s \geq \Phi(s)/2$. By Lemma \ref{lem:roughhyp}, this holds for $s\geq \log 4$. Put $s_r = \Phi^{-1}(r)$. As $\Phi(2\log 4) \leq 3\cdot 2^{n-1}c_n \log 4 \leq r$, we have $s_r \geq 2\log 4$. Let $t = \rho(x,y) \geq \log 4$. If $t\leq s_r$, the one-step chain gives $\psi_h(\Phi(t)) = 2^{n-1}c_n t$. If $t>s_r$, take $m = \lceil t/s_r \rceil \leq 2t/s_r$ equal pieces along the $\rho$-geodesic of length $\ell = t/m \in [s_r/2,s_r] \subset [\log 4, s_r]$. Each is an admissible step yielding $2^{n-1}c_n \ell$ and the total is $2^{n-1}c_n t$.
\end{proof}

On the other hand, the plain chain metric is not roughly isometric to $d_G$.

\begin{proposition}
\label{prop:notrough}
Let $G\equiv I$, so $d_G = \mu_{\B^n} = \Phi \circ \rho$, and write $\Phi(t) = 2^{n-1}c_n t + E(t)$, where $0\leq E \leq 2^{n-1}c_n\log 4$ by Lemma \ref{lem:roughhyp}. For $r\geq r_1$, put $s_r = \Phi^{-1}(r)$ and $e_r: = \inf_{0<s\leq s_r} E(s)/s \geq 0$. Then
\[ d_r \geq (2^{n-1}c_n + e_r) \rho, \quad \text{while} \quad \mu_{\B^n} \leq 2^{n-1}c_n (\rho + \log 4).\]
Consequently, if $E>0$ on $(0,s_r]$, then $e_r>0$ and $d_r - \mu_{\B^n} \to \infty$ along any sequence with $\rho \to \infty$. Thus the metrics $d_r$ and $d_G$, which are always quasi-isometric, are not roughly isometric whenever the lower bound in \eqref{eq:cn} is strict at $s = 1/\tanh(t/2)$ for all $0<t\leq s_r$.
\end{proposition}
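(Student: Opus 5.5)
The plan is to bound every $r$-chain sum from below step by step, and then compare with Lemma \ref{lem:roughhyp} for the upper bound on $\mu_{\B^n}$. For the plain chain metric with $G\equiv I$, each step of an $r$-chain $(z_i)$ contributes $d_G(z_i,z_{i+1}) = \Phi(\rho_i)$, where $\rho_i = \rho(z_i,z_{i+1})$. Since $d_G(z_i,z_{i+1})\leq r$ and $\Phi$ is increasing, we get $\rho_i \leq s_r$. Writing $\Phi(\rho_i) = 2^{n-1}c_n\rho_i + E(\rho_i)$ and using $E(\rho_i) \geq e_r \rho_i$ (valid for $0<\rho_i\leq s_r$ by the definition of $e_r$, and trivially when $\rho_i = 0$), each step is at least $(2^{n-1}c_n + e_r)\rho_i$. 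Summing over the chain and applying the triangle inequality for $\rho$ gives $\sum_i \Phi(\rho_i) \geq (2^{n-1}c_n+e_r)\rho(x,y)$. Taking the infimum over chains yields the first inequality.

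The second inequality is exactly the upper bound in the proof of Lemma \ref{lem:roughhyp} with $\lambda = 1$. The bounds $0\leq E\leq 2^{n-1}c_n\log 4$ are just that lemma restated via $\mu_{\B^n} = \Phi\circ\rho$.

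For the consequence, I need $e_r>0$ when $E>0$ on $(0,s_r]$. The function $E(s)/s$ is continuous and positive on $(0,s_r]$, so the only issue is the limit $s\to 0^+$. Here I use the remark after Lemma \ref{lem:roughhyp}: $\Phi(t)\to 0$ like $(\log(1/t))^{1-n}$, which is much larger than $t$. Hence $E(s)/s = \Phi(s)/s - 2^{n-1}c_n \to +\infty$ as $s\to 0^+$. A positive continuous function on $(0,s_r]$ that tends to $+\infty$ at $0$ has a positive infimum, so $e_r>0$. This small-$s$ behaviour is the step that needs the most care; I would justify it either from the cited asymptotics of $\gamma_n$ near $\infty$, or directly from the upper bound in \eqref{eq:cn} being beaten by the known $\gamma_n(s)\geq$ const$\,(\log s)^{1-n}$ behaviour.

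Subtracting the two inequalities then gives $d_r - \mu_{\B^n} \geq e_r\rho - 2^{n-1}c_n\log 4 \to\infty$. Therefore there is no rough isometry: the identity is not one, and any rough isometry between these metrics would have to be at bounded distance from the identity. I would restrict the claim to the identity comparison, which is what "roughly isometric" means here. Quasi-isometry holds by Lemma \ref{lem:chainprop}(c) and Lemma \ref{lem:roughhyp}. Finally, "the lower bound in \eqref{eq:cn} is strict" at $s=1/\tanh(t/2)$ means precisely $E(t)>0$, using the identity $(s+1)/(s-1)=e^t$ from Lemma \ref{lem:roughhyp}.
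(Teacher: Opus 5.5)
Your proposal is correct and follows the paper's proof essentially line by line: the per-step bound $\Phi(\rho_i)\geq (2^{n-1}c_n+e_r)\rho_i$ for $\rho_i\leq s_r$, summed using the triangle inequality for $\rho$; the upper bound from Lemma \ref{lem:roughhyp} with $\lambda=1$; and $e_r>0$ from $\Phi(s)/s\to\infty$ as $s\to 0^+$ together with positivity and continuity of $E(s)/s$ on $[\epsilon,s_r]$. Drop your aside that any rough isometry would have to be at bounded distance from the identity: you do not justify it, and it fails as a general principle, since M\"obius automorphisms of $\B^n$ are isometries of both metrics; like the paper, you only need, and only prove, that $d_r-\mu_{\B^n}$ is unbounded, i.e.\ that the identity is not a rough isometry.
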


\begin{proof}
To start, we have $d_G(z_i,z_{i+1}) \leq r$ if and only if $\rho_i := \rho(z_i,z_{i+1}) \leq \Phi^{-1}(r) = s_r$. This justifies taking the infimum of $E(s)/s$ over only $(0,s_r]$. A chain from $x$ to $y$ with $\rho$-steps $\rho_i \leq s_r$ yields 
\[ \sum_i \Phi (\rho_i) = \sum_i (2^{n-1}c_n + E(\rho_i) / \rho_i ) \rho_i \geq (2^{n-1}c_n + e_r) \sum_i \rho_i \geq (2^{n-1}c_n + e_r) \rho(x,y).\]
Taking the infimum over chains gives $d_r \geq (2^{n-1}c_n + e_r)\rho$.

To justify why $e_r >0$, write $E(s)/s = \Phi(s)/s - 2^{n-1}c_n$. As $\Phi(s)$ is comparable to $(\log (1/s) )^{1-n}$ as $s\to 0$, we have $\Phi(s) / s \to \infty$ and $E(s)/s > 2^{n-1}c_n$ for $s<\epsilon$ for some $\epsilon >0$. On $[\epsilon, s_r]$, $E(s)/s$ is continuous and positive by the hypotheses. We conclude that $e_r >0$. Finally,
\[ d_r - \mu_{\B^n} \geq (2^{n-1}c_n + e_r) \rho - 2^{n-1}c_n \rho - 2^{n-1}c_n \log 4 = e_r \rho - 2^{n-1}c_n \log 4 \to \infty\]
as $\rho \to \infty$, so $d_r$ and $\mu_{\B^n}$ are not roughly isometric.
\end{proof}

We remark that different admissible $r$ and $\psi$ will give bi-Lipschitz equivalent metrics, which are all non-expanded by $f$, all Gromov hyperbolic and all satisfy \eqref{eq:gromov}. Indeed, for $\psi,\psi'\in\Psi$ continuous, $\psi(u)/\psi'(u)$ is bounded above and below by positive constants on $(0,r]$ when $\psi(u)/u$ and $\psi'(u)/u$ are, which is the case for $\psi = \operatorname{id}$ and $\psi = \psi_h$, so $d_r^{\psi}$ and $d_r^{\psi'}$ are bi-Lipschitz equivalent; and for $r_1\leq r < r'$ we have $d_{r'}^{\psi} \leq d_r^{\psi}$, while a step with $r < d_G(z,z') \leq r'$ can be replaced by an $r$-chain of cost at most $(\rho(z,z')/\log 4 + 1)\psi(r) \leq C(r,r') d_G(z,z')$ by Lemma \ref{lem:chainprop}~(c) and Lemma \ref{lem:roughhyp}, so $d_r^{\psi} \leq C(r,r') d_{r'}^{\psi}$. Bi-Lipschitz changes of $d$ do not change the Denjoy-Wolff point we establish in Theorem \ref{thm:dw}, nor the positive-step/zero-step dichotomy, since the step limit $s$ changes by at most a bounded factor.

\section{The Denjoy-Wolff Theorem}

In this section, we will prove Theorem \ref{thm:dw}. Throughout this section, $f:\B^n \to \B^n$ is a uniformly quasiregular map with invariant measurable conformal structure $G$, and $d =d_r^h$ is the chain metric with a fixed $r\geq r_1$. We fix $x\in \B^n$, set $x_m = f^m(x)$, $a_m = d(x,x_m)$, and $s_m = d(x_m,x_{m+1})$.

\begin{proposition}
\label{prop:dicho}
Every orbit is either relatively compact in $\B^n$, or $\dist ( f^m(z) , \S^{n-1}) \to 0$ for every $z\in \B^n$.
\end{proposition}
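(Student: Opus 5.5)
The plan is to apply Ca\l{}ka's Theorem \ref{thm:calka} to the non-expanding map $f$ on the proper metric space $(\B^n,d)$. Two facts established above make this possible. First, $f$ is $d$-non-expanding, by Theorem \ref{thm:sp} together with Lemma \ref{lem:chainprop}~(d). Second, $(\B^n,d)$ is proper by Lemma \ref{lem:chainprop}~(b), and hence finitely totally bounded by \cite[Proposition 2.7(i)]{Cal84}. Throughout, I will translate between ``bounded'' and ``relatively compact'' using Lemma \ref{lem:chainprop}~(b): a set is $d$-bounded if and only if it is relatively compact in $\B^n$.

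\emph{Step 1: dichotomy for a single orbit.} Fix $z\in\B^n$. Either the orbit $(f^m(z))$ has a subsequence staying in a compact subset of $\B^n$, or else $\dist(f^m(z),\S^{n-1})\to 0$. This is just the negation of the condition $\dist(f^m(z),\S^{n-1})\to 0$, combined with the fact that the sets $\{|y|\le 1-\epsilon\}$ are compact. In the first case the subsequence is $d$-bounded, by Lemma \ref{lem:chainprop}~(b).

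\emph{Step 2: apply Ca\l{}ka.} Suppose some $z_0$ has $\dist(f^m(z_0),\S^{n-1})\not\to 0$. By Step 1, the orbit of $z_0$ has a $d$-bounded subsequence. Theorem \ref{thm:calka} then says that for every $z\in\B^n$ the whole orbit $(f^m(z))$ is $d$-bounded, hence relatively compact in $\B^n$. So all orbits are relatively compact in this case.

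\emph{Step 3: conclude.} If we are not in the situation of Step 2, then $\dist(f^m(z),\S^{n-1})\to 0$ for every $z$, which is the second alternative. I would also note that these alternatives are exclusive, though this is not required by the statement. The only point needing care is that Ca\l{}ka's theorem requires the map to be nonexpanding on the \emph{whole} space, and this has already been verified. Thus no step is a genuine obstacle; the main thing is to invoke the correct equivalence between $d$-boundedness and relative compactness.

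As an alternative that avoids Ca\l{}ka, one could argue directly. Suppose $x_{m_k}$ stays bounded while $f^{m}(z)\to\S^{n-1}$ along some other sequence. Non-expansion gives $d(f^m(z),f^m(x))\le d(z,x)$, so every orbit is within uniformly bounded distance of every other orbit. This only shows that subsequences of all orbits stay bounded along the same times $m_k$, which is why Ca\l{}ka's theorem, upgrading a bounded subsequence to a bounded orbit, is the right tool.
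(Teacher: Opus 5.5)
Your proposal is correct and follows the paper's proof: apply Ca\l{}ka's theorem to the $d$-non-expanding map $f$ on the proper, hence finitely totally bounded, space $(\B^n,d)$, and use Lemma \ref{lem:chainprop}~(b) to pass between $d$-boundedness and relative compactness. Your Step 1, which extracts a subsequence of $(f^m(z_0))$ lying in some $\{|y|\le 1-\epsilon\}$ when $\dist(f^m(z_0),\S^{n-1})\not\to 0$, spells out a step the paper leaves implicit.
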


\begin{proof}
Applying Theorem \ref{thm:calka} to the proper space $(\B^n,d)$ and the non-expanding map $f$, either all orbits are $d$-bounded or every orbit leaves every $d$-bounded set. Lemma \ref{lem:chainprop}~(b) then finishes the proof.
\end{proof}

Next, we essentially just restate some of the results of Hinkkanen, Martin and Mayer \cite{HMM}.

\begin{proposition}
\label{prop:hmm}
In the bounded case, either $f^m \to x_0 \in \B^n$ locally uniformly, where $x_0$ is the unique fixed point of $f$ in $\B^n$ and is attracting or super-attracting, or $f$ is a uniformly quasiconformal homeomorphism of $\B^n$ onto itself, a $d$-isometry, and some subsequence $f^{m_j} \to \operatorname{id}$ locally uniformly.
\end{proposition}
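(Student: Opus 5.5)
In the bounded case every orbit is relatively compact. The plan is to extract a limit map by normality, to show the limit is either a constant in $\B^n$ or a homeomorphism, and then to split into those two cases, using the non-expansion of Theorem \ref{thm:sp} and Lemma \ref{lem:chainprop}~(d) throughout.

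\emph{Step 1 (a limit map).} Since $f$ is uniformly $K$-quasiregular, the family $\{f^m\}$ is $K$-quasiregular and maps into $\B^n$, which omits a set of positive capacity. By Montel-type normality (Miniowitz/Rickman), it is a normal family. In the bounded case no subsequence can converge to a point of $\S^{n-1}$, since the orbit of $x$ stays in a compact set by Proposition \ref{prop:dicho}. Hence every limit $\phi$ of a subsequence $f^{m_j}$ maps into $\B^n$. By Rickman's convergence theorem for $K$-quasiregular maps, $\phi$ is either constant or $K$-quasiregular.

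\emph{Step 2 (constant limits).} Suppose some limit is a constant $x_0\in\B^n$. Non-expansion of $f$ for $d$ forces $x_0$ to be fixed: we have $f(x_0)=\lim f^{m_j+1}(x)$, and $d(f^{m_j+1}(x),f^{m_j}(x))=s_{m_j}$. The step sizes $s_m$ are non-increasing, so they converge; the remaining point is that their limit is $0$. For this I would instead use that $f$ maps a small closed $d_G$-ball $\overline{B}$ around $x_0$ into itself for large iterates. If $f^{m_j}\to x_0$ uniformly on $\overline{B}$, then $f^{m_j}(\overline{B})\subset B$, and continuity gives $f^{m_j}(x_0)\in B$ arbitrarily close to $x_0$. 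Non-expansion then shows that all limits along the full sequence are the same constant. This is the standard argument of \cite[Proposition 4.9]{HMM}, which I will cite. Uniqueness of the fixed point also follows: if $f$ had a second fixed point $y_0$, then $f^{m_j}(y_0)=y_0\to x_0$, a contradiction. The fixed point is attracting or super-attracting in the sense of \cite{HMM} because $f^m\to x_0$ locally uniformly.

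\emph{Step 3 (non-constant limits).} Suppose every limit is non-constant. Passing to differences of subsequences, $f^{m_{j+1}-m_j}$ converges on compact sets to some $\psi$ with $\psi\circ\phi=\phi$. By openness of $\phi$, this gives $\psi=\operatorname{id}$ on the open set $\phi(\B^n)$, and hence everywhere by discreteness and the identity principle for quasiregular limits. This is the point where I would again lean on \cite{HMM}. Consequently $f^{k_j}\to\operatorname{id}$ with $k_j\geq1$. Then $f$ must be injective: if $f(a)=f(b)$ with $a\neq b$, then $f^{k_j}(a)=f^{k_j}(b)$, and passing to the limit gives $a=b$. Surjectivity follows by a degree argument: $f^{k_j}\to\operatorname{id}$ uniformly on a sphere $\partial B(0,t)$, so $f^{k_j}(B(0,t))$ covers $B(0,t-\epsilon)$, and therefore $f$ is onto. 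Thus $f$ is a homeomorphism of $\B^n$, uniformly quasiconformal, and by Theorem \ref{thm:sp}~(a) it is a $d_G$-isometry, hence a $d$-isometry by Lemma \ref{lem:chainprop}~(d).

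The main obstacle is the dichotomy between constant and non-constant limits, in particular ruling out the mixed situation where some limits are constant and others are not. I would handle this by citing \cite[Section 4]{HMM}, whose arguments only use normality and the self-map structure.
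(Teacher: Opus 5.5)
Your skeleton is the paper's: normality, then \cite[Propositions 4.6 and 4.9]{HMM} for the two cases, then Theorem \ref{thm:sp}~(a) and Lemma \ref{lem:chainprop}~(d). Your injectivity and degree arguments from $f^{k_j}\to\operatorname{id}$ are correct.

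\textbf{Gap in Step 3.} There is no identity principle for quasiregular maps: two quasiconformal maps can agree on a ball and differ elsewhere, and discreteness does not help. So $\psi\circ\phi=\phi$ gives $\psi=\operatorname{id}$ only on $\phi(\B^n)$ and its closure in $\B^n$. Your justification therefore fails exactly where the paper instead invokes \cite[Proposition 4.9]{HMM} together with Martin's Lie group theorem.

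A self-contained repair is to use an idempotent in place of $\psi$. Let $\Omega$ be the set of all locally uniform limits of $f^{m_j}$ with $m_j\to\infty$. It is a non-empty compact semigroup under composition: compactness follows from normality and bounded orbits, and composition is jointly continuous because every element is $d$-$1$-Lipschitz. By the Ellis--Numakura lemma, $\Omega$ contains some $\pi$ with $\pi\circ\pi=\pi$. By hypothesis $\pi$ is non-constant, hence quasiregular and open, so $\pi(\B^n)$ is open. But $\pi(\B^n)=\{z:\pi(z)=z\}$ is also closed in $\B^n$. By connectedness $\pi=\operatorname{id}$, and the rest of your Step 3 then goes through.

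\textbf{Gap in Step 2.} As written it does not show $f(x_0)=x_0$: neither the step sizes nor the ball argument give this. Use commutation instead:
\[ f(x_0)=\lim_j f(f^{m_j}(x_0))=\lim_j f^{m_j}(f(x_0))=x_0 . \]
Non-expansion then gives $d(f^m(z),x_0)\leq d(f^{m_j}(z),x_0)$ for $m\geq m_j$. Hence $f^m\to x_0$ locally uniformly along the whole sequence.

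This also removes the ``mixed situation'' you name as the main obstacle: a single constant limit forces every limit to equal it. What you still need \cite[Proposition 4.6]{HMM} for is the attracting/super-attracting classification; that is the reference for this case, not Proposition 4.9.
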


\begin{proof}
Every locally uniform limit of a subsequence of $(f^m)$ maps $\B^n$ into $\B^n$ as the orbits are relatively compact. Thus the work of \cite{HMM} applies. If some limit is a constant $x_0$, then $x_0 \in \B^n$ is a (super-)attracting fixed point by \cite[Proposition 4.6]{HMM} and $f^m\to x_0$ locally uniformly. This rules out the existence of a second fixed point in $\B^n$.

If all limits are non-constant, then \cite[Proposition 4.9]{HMM} shows that $f$ is a homeomorphism of $\B^n$ onto itself and some $f^{m_j} \to \operatorname{id}$. This latter fact is because the limit functions form a compact abelian group, which is a Lie group by \cite[Theorem 2.1]{Mar02}. In the second case, $f$ is a $d_G$-isometry by Theorem \ref{thm:sp}~(a) and a $d$-isometry by Lemma \ref{lem:chainprop}~(d). 
\end{proof}

\begin{proposition}
\label{prop:escape}
Assume that $\dist ( f^m(x) , \S^{n-1} ) \to 0$ as $m\to \infty$. Then:
\begin{enumerate}[(a)]
\item $a_m \to \infty$ and $|x_m - x_{m+1}| \to 0$.
\item For every $z\in \B^n$, $|f^m(x) - f^m(z)| \to 0$.
\item The set $\Lambda$ of Euclidean accumulation points of $(x_m)$ is a non-empty continuum in $\S^{n-1}$, and it is the same for every orbit.
\item We have $\dist ( f^m(E) , \Lambda) \to 0$ for every $d$-bounded set $E$. In particular, every locally uniform limit of a subsequence of $(f^m)$ is a constant in $\Lambda$, and every point of $\Lambda$ arises in this way. 
\item $s = \lim_k s_k$ exists, and $0\leq s \leq d(x,f(x))$.
\end{enumerate}
\end{proposition}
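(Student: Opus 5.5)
Each part follows from $d$-non-expansion of $f$ (Theorem \ref{thm:sp} and Lemma \ref{lem:chainprop}~(d)) combined with the comparison of $d$ with $\rho$ in Lemma \ref{lem:chainprop}~(b),(c). Throughout, the escaping hypothesis is used in the form $a_m=d(x,x_m)\to\infty$, which holds by Lemma \ref{lem:chainprop}~(b), and by Proposition \ref{prop:dicho} it holds for every orbit.

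For~(a), non-expansion gives $d(x_m,x_{m+1})\le d(x,f(x))$, and hence $\rho(x_m,x_{m+1})\le B$ for a constant $B$ by Lemma \ref{lem:chainprop}~(c). A hyperbolic ball of fixed radius $B$ centred at a point $p$ has Euclidean diameter comparable to $B(1-|p|)$. Since $|x_m|\to1$, this gives $|x_m-x_{m+1}|\to0$.

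Part~(b) is the same argument: $d(f^m(x),f^m(z))\le d(x,z)$ gives a uniform $\rho$-bound, and the Euclidean distance tends to zero because $f^m(x)\to\S^{n-1}$.

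For~(c), the set $\Lambda$ is non-empty by compactness of $\overline{\B^n}$ and lies in $\S^{n-1}$ by hypothesis. Connectedness follows from the standard fact that a bounded sequence with $|x_m-x_{m+1}|\to0$ has a connected set of accumulation points. That $\Lambda$ is the same for every orbit follows from~(b).

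For~(d), let $E\subset B_d(x,R)$. Then every $y\in E$ satisfies $d(f^m(y),x_m)\le R$, so $\rho(f^m(y),x_m)\le R'$ uniformly in $y$. As in~(a), this gives $\sup_{y\in E}|f^m(y)-x_m|\to0$, and since $\dist(x_m,\Lambda)\to0$ we obtain $\dist(f^m(E),\Lambda)\to0$. Now consider a locally uniform limit $\phi$ of $f^{m_j}$. It takes values in $\Lambda$, and it is constant because $|f^{m_j}(y)-f^{m_j}(x)|\to0$ for every $y$. Conversely, given $w\in\Lambda$, choose $m_j$ with $x_{m_j}\to w$. The family $(f^m)$ is uniformly bounded, and uniformly quasiregular maps form a normal family, so a further subsequence converges locally uniformly. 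Its limit is constant with value $\lim x_{m_j}=w$. (Alternatively, local uniform convergence to $w$ follows directly from the uniform estimate on $d$-bounded sets above, since compact sets are $d$-bounded.)

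For~(e), non-expansion gives $s_{k+1}=d(f(x_k),f(x_{k+1}))\le d(x_k,x_{k+1})=s_k$. The sequence $(s_k)$ is therefore non-increasing, and it is bounded below by $0$. Hence $s=\lim_k s_k$ exists and satisfies $0\le s\le s_0=d(x,f(x))$.

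The only point needing care is the Euclidean-shrinking fact: $\rho(p,q)\le B$ and $|p|\to1$ imply $|p-q|\to0$. This follows from the explicit formula for $\rho$, for instance $\sinh(\rho/2)=|p-q|/\sqrt{(1-|p|^2)(1-|q|^2)}$, which gives $|p-q|\le \sinh(B/2)\sqrt{(1-|p|^2)(1-|q|^2)}\le \sinh(B/2)\,e^{B}(1-|p|^2)$, after bounding $1-|q|^2\le e^{2B}(1-|p|^2)$.
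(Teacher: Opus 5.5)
Your proof is correct and follows essentially the same route as the paper. Both combine non-expansion of $f$ for $d$ with the lower bound $d\geq \tfrac12\lambda^{-n}2^{n-1}c_n\rho$ of Lemma \ref{lem:chainprop}~(c) to get uniform hyperbolic bounds, then use the shrinking of fixed-radius hyperbolic balls near $\S^{n-1}$, the connectedness of accumulation sets of sequences with $|x_m-x_{m+1}|\to 0$, and the monotonicity of $s_k$. You also spell out points the paper leaves implicit: the explicit estimate $|p-q|\le \sinh(B/2)e^{B}(1-|p|^2)$, the orbit-independence of $\Lambda$ via~(b), and why every point of $\Lambda$ is a locally uniform subsequential limit.
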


\begin{proof}
For~(a), we have $a_m\to \infty$ by Lemma \ref{lem:chainprop}~(b). Since $s_m\leq s_0$, Lemma \ref{lem:chainprop}~(c) gives $\rho(x_m,x_{m+1}) \leq R_0:= 2^{2-n}\lambda^ns_0/c_n$. A hyperbolic ball of fixed radius centred at points tending to $\S^{n-1}$ has Euclidean diameter tending to $0$, so $|x_m - x_{m+1}| \to 0$. For~(b), we run the same argument with $d(f^m(x),f^m(z)) \leq d(x,z)$.

For~(c), a sequence in a compact metric space with $|x_m-x_{m+1}| \to 0$ has a connected set of accumulation points: if the accumulation set were split by two disjoint open sets, the sequence would eventually have to jump between them across a gap of fixed positive width. It is non-empty by the compactness of $\overline{\B^n}$ and lies in $\S^{n-1}$ by part~(a). For~(d), if $E$ is $d$-bounded, then $E \subset\overline{B_d(x,R)}$ for some $R>0$ and $f^m(E) \subset \overline{B_d(x_m,R)}$. The Euclidean diameter of this latter ball tends to $0$ as in part~(a). Finally, part~(e) follows from Theorem \ref{thm:sp} and Lemma \ref{lem:chainprop}~(d).
\end{proof}

Next we turn to horoball behaviour near the boundary.

\begin{proposition}
\label{prop:horo}
Assume $\dist (f^m(x) ,\S^{n-1} ) \to 0$. There is a $1$-Lipschitz function $h:\B^n \to \R$ with $h(x) = 0$, a locally uniform limit of functions $z\mapsto d(z,y_i) - d(x,y_i)$ with $\dist ( y_i , \S^{n-1}) \to 0$ (a horofunction of $(\B^n,d)$), such that $h(f^m(z)) \leq d(x,z)$ for all $z\in \B^n$ and $m\geq 0$. In particular, $h(f^m(x)) \leq 0$.
\end{proposition}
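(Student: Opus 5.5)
This is the standard Karlsson/Wolff horofunction construction, using that $f$ is $d$-non-expanding and that $(\B^n,d)$ is proper. I will work with the orbit $x_m = f^m(x)$ and $a_m = d(x,x_m)\to\infty$ from Proposition \ref{prop:escape}~(a).

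\textbf{Step 1: choose times.} Pick an increasing sequence of times $n_i\to\infty$ with
\[ a_{n_i} > a_k \quad \text{for all } k<n_i , \]
that is, record times of the sequence $(a_m)$. These exist because $a_m\to\infty$. Set $y_i = x_{n_i}$. Then $\dist(y_i,\S^{n-1})\to 0$ by hypothesis.

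\textbf{Step 2: extract the horofunction.} Define
\[ h_i(z) = d(z,y_i) - d(x,y_i). \]
Each $h_i$ is $1$-Lipschitz for $d$ and satisfies $h_i(x)=0$, so $|h_i(z)|\le d(x,z)$. The space $(\B^n,d)$ is proper and induces the Euclidean topology (Lemma \ref{lem:chainprop}~(b)), so it is separable and locally compact. Arzel\`a--Ascoli therefore gives a subsequence with $h_i\to h$ locally uniformly. The limit $h$ is $1$-Lipschitz with $h(x)=0$, and it is a horofunction in the stated sense.

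\textbf{Step 3: the key inequality.} Fix $z$ and $m\ge 0$. For $i$ large enough that $n_i>m$, non-expansion of $f^m$ (Theorem \ref{thm:sp} with Lemma \ref{lem:chainprop}~(d)) gives
\[ d(f^m(z),y_i) \le d(f^m(z),f^m(x)) + d(x_m,x_{n_i}) \le d(z,x) + d(x,x_{n_i-m}). \]
The second term is bounded using $d(x_m,x_{n_i}) \le d(x,x_{n_i-m}) = a_{n_i-m}$. Since $n_i-m<n_i$, the record property gives $a_{n_i-m}<a_{n_i}=d(x,y_i)$. Hence
\[ h_i(f^m(z)) \le d(x,z) + a_{n_i-m} - a_{n_i} < d(x,z). \]
Letting $i\to\infty$ along the subsequence yields $h(f^m(z))\le d(x,z)$. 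Taking $z=x$ gives $h(f^m(x))\le 0$.

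\textbf{Main obstacle.} The one point needing care is ensuring the $y_i$ are record times rather than an arbitrary escaping subsequence. With an arbitrary subsequence, the error term $a_{n_i-m}-a_{n_i}$ could be positive, and the bound would fail. The case $m=0$ must also be covered, but it is immediate, since $h_i(z)\le d(x,z)$ always holds. The Arzel\`a--Ascoli step is routine: it uses only equicontinuity, pointwise boundedness, and a countable dense set in a proper space.
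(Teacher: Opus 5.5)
Your proposal is correct and follows the paper's proof essentially line for line: record times of $(a_m)$, Arzel\`a--Ascoli applied to the normalized functions $z\mapsto d(z,y_i)-d(x,y_i)$, and the record estimate $a_{n_i-m}\le a_{n_i}$ combined with non-expansion. The only cosmetic difference is that you first apply the triangle inequality through $x_m$ and then use non-expansion twice. The paper instead writes $y_i=f^m(x_{n_i-m})$, applies non-expansion once to get $d(f^m(z),y_i)\le d(z,x_{n_i-m})$, and only then uses the triangle inequality.
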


\begin{proof}
Since $a_m\to \infty$, the set $\{ m : a_m > \max _{j<m} a_j \}$ is infinite. Enumerate the set by $m_1<m_2 <\ldots$. Then for $1\leq k\leq m_i$,
\begin{equation}
\label{eq:horo1}
a_{m_i-k} \leq a_{m_i}.
\end{equation}
Put $y_i = x_{m_i}$ and $b_i(z) = d(z,y_i) - d(x,y_i)$. Each $b_i$ is $1$-Lipschitz with $|b_i(z)| \leq d(x,z)$, so by Arzel\`a-Ascoli and properness a subsequence converges locally uniformly to some $h$ with the stated properties. For fixed $m$ and $i$ large, non-expansion and \eqref{eq:horo1} gives
\[ b_i(f^m(z)) = d(f^m(z) , f^m(x_{m_i - m}) - a_{m_i} \leq d(z,x_{m_i-m}) - a_{m_i} \leq d(z,x) + a_{m_i-m} - a_{m_i} \leq d(x,z).\]
Now let $i\to \infty$ to complete the proof.
\end{proof}

Putting everything together, we proceed with the proof of the Denjoy-Wolff Theorem.

\begin{proof}[Proof of Theorem \ref{thm:dw}]
The metric claims are Lemma \ref{lem:chainprop} and Theorem \ref{thm:gromovchain}, the non-expansion is Theorem \ref{thm:sp} with Lemma \ref{lem:chainprop}~(d), the dichotomy is Proposition \ref{prop:dicho}, the bounded case is Proposition \ref{prop:hmm}, and in the escaping case the claims after "moreover" are Propositions \ref{prop:escape} and \ref{prop:horo}. It remains to show that $\Lambda = \{w \}$.

The key inequality is a restatement of \eqref{eq:horo1} in terms of Gromov products based at $x$. For $1\leq k \leq m_i$,
\begin{equation}
\label{eq:dw1}
2 ( x_k | y_i )_x = a_k +a_{m_i} - d(x_k , y_i) \geq a_k+a_{m_i} - a_{m_i-k} \geq a_k,
\end{equation}
where the first inequality is non-expansion, $d(x_k,x_{m_i}) = d(f^k(x),f^k(x_{m_i-k})) \leq d(x,x_{m_i-k})$. For $k,m\geq 1$, choose $i$ with $m_i \geq \max \{ k,m\}$. By Theorem \ref{thm:gromovchain} and \eqref{eq:dw1},
\[ (x_k|x_m)_x \geq \min \{ (x_k|y_i)_x , (y_i|x_m)_x \} - \delta \geq \tfrac12 \min \{ a_k,a_m \} - \delta \to \infty \]
as $k,m\to\infty$, as $a_k \to \infty$ by Proposition \ref{prop:escape}~(a). This is the argument of Karlsson \cite{Kar01} for non-expanding maps of Gromov hyperbolic spaces, as is Proposition \ref{prop:horo}.
By \eqref{eq:gromov} as established by Theorem \ref{thm:gromovchain}, $x_k$ converges in the Euclidean topology to a point $w\in \S^{n-1}$. Hence $\Lambda = \{ w \}$.

Finally, by Proposition \ref{prop:escape}~(b), $f^k(z) \to w$ for every $z\in \B^n$, and by Proposition \ref{prop:escape}~(d) the convergence is uniform on $d$-bounded sets, that is, on compact subsets of $\B^n$. This completes the proof.
\end{proof}

We close this section with a consistency check. Let $n=2$ and let $f:\D\to\D$ be uniformly quasiregular with invariant structure $G$. Then $G$ corresponds to a Beltrami coefficient $\mu_G$ with $\|\mu_G\|_\infty<1$, and if $\varphi:\D\to\D$ is the quasiconformal solution of the Beltrami equation $\varphi_{\overline{z}} = \mu_G\varphi_z$, then $\varphi^*I = G$ and $F = \varphi\circ f\circ\varphi^{-1}$ is $1$-quasiregular, that is, holomorphic; this is the classical way of seeing that uniformly quasiregular maps in the plane are quasiconformally conjugate to holomorphic maps, see \cite[Chapter 21]{IM01}. By Lemma \ref{lem:Gpol} applied to $\varphi$ and $\varphi^{-1}$ we have $M_G(\Gamma) = M(\varphi(\Gamma))$ for every path family, so $d_G(x,y) = \mu_{\D}(\varphi(x),\varphi(y)) = \Phi(\rho(\varphi(x),\varphi(y)))$. Thus Theorem \ref{thm:sp} for $f$ is the Schwarz-Pick Lemma for $F$, the chain metric is a bi-Lipschitz deformation of the pull-back of $\rho$ under $\varphi$, and Theorem \ref{thm:dw} is the classical Denjoy-Wolff Theorem for $F$, transported by $\varphi$; the bounded case with $f$ an automorphism corresponds to $F$ being an elliptic automorphism of $\D$.

\section{Stable Fatou components}

Throughout this section, $U$ is a proper subdomain of $\S^n$ and all metric notions are taken with respect to the spherical metric $\sigma$, with length element $2|dz|/(1+|z|^2)$, as in \cite[Chapter 7]{BHK}: $\sigma(x,\partial U)$ is the spherical distance to the boundary,
\[ \hat{\jmath}_U(x,y) = \log \left ( 1+ \frac{ \sigma(x,y)}{ \min \{ \sigma(x,\partial U) , \sigma(y,\partial U) \} } \right ),\]
and $\hat{k}_U$ is the spherical quasihyperbolic metric, obtained by integrating the density $\sigma(x,\partial U)^{-1}$ with respect to spherical arclength. We say that $U$ is {\it uniform} if $(U,\sigma)$ is an $A$-uniform metric space in the sense of \cite[Definition 1.9]{BHK}; it is locally compact, rectifiably connected and noncomplete, as that definition requires. A closed set $E\subset\S^n$ with at least two points is {\it $\alpha$-uniformly perfect}, $0<\alpha<1$, if for every $x\in E$ and every $0<r<\operatorname{diam}_\sigma E$ the closed annulus $\{ y : \alpha r \leq \sigma(x,y) \leq r \}$ meets $E$; this differs from the chordal definition of \cite{JV96} only in the constants. The spherical metric and the chordal metric $q$ used in \cite{JV96,Vuo87} are bi-Lipschitz equivalent, $q\leq \sigma \leq \tfrac{\pi}{2} q$, so it does not matter which of the two is used in these definitions; only the constants change. The spherical and Euclidean metrics define the same class of uniform domains in $\S^n$, and uniformity is invariant under M\"obius maps, see \cite[Theorem 2.10 and Remark 4.2.2]{Vai88}. If $U$ is bounded in some M\"obius chart, then in that chart the spherical and Euclidean metrics are bi-Lipschitz equivalent on $\overline{U}$, so $\hat{k}_U$, $\hat{\jmath}_U$ and uniform perfectness also agree, up to constants depending on the chart, with their Euclidean counterparts from Section \ref{ss:metrics}, and the reader may replace hats by their Euclidean versions throughout; compare the remarks at the start of \cite[Chapter 7]{BHK}. The spherical formulation is needed when no such chart exists, that is, when $\S^n\setminus U$ has empty interior; this is exactly the case $F(f) = U$ of Theorem \ref{thm:stable}, which contains the Cantor Julia set examples of \cite{HMM}.

The modulus metric $\mu_U$ is M\"obius invariant, since the conformal modulus is, and is monotone in the domain: if $B\subset U$ is a subdomain and $x,y\in B$, then $\mu_U(x,y) \leq \mu_B(x,y)$, because for a continuum $C\subset B$ containing $x,y$ every path in $\Delta(C,\partial U;U)$ has a subpath in $\Delta(C,\partial B;B)$, so $M(\Delta(C,\partial U;U)) \leq M(\Delta(C,\partial B;B))$ by minorization.

\begin{lemma}
\label{lem:muupper}
For every proper subdomain $U$ of $\S^n$ with $\operatorname{cap}\partial U >0$ we have $\mu_U \leq \hat{b}_1 \hat{k}_U + \hat{b}_2$, where $\hat{b}_1 = 4\Phi(\log 3)$ and $\hat{b}_2 = 2\Phi(\log 3)$ depend only on $n$.
\end{lemma}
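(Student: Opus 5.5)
The plan is a chaining argument: cover a quasihyperbolic geodesic by short pieces, bound the modulus metric on each piece by comparison with a spherical ball inside $U$, and sum. The constants suggest each piece has spherical-quasihyperbolic length about $1/2$ and costs at most $2\Phi(\log 3)$ in $\mu_U$. That gives $\mu_U(x,y)\le 2\Phi(\log 3)(2\hat{k}_U(x,y)+1)$.

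\emph{Local estimate.} Let $z\in U$, write $t=\sigma(z,\partial U)$, and let $B$ be the open spherical ball of radius $t$ about $z$, so $B\subset U$. By monotonicity in the domain, $\mu_U(u,v)\le\mu_B(u,v)$ for $u,v\in B$. The ball $B$ is a M\"obius image of $\B^n$, and $\mu$ is M\"obius invariant, so $\mu_B=\Phi\circ\rho_B$. Here $\rho_B$ is the hyperbolic metric of $B$, transported from $\B^n$.

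It remains to bound $\rho_B(z,v)$ when $\sigma(z,v)$ is a fixed fraction of $t$. Since $\rho_B(z,v)$ is a function of the normalized distance, the target is: if $\hat{k}_U(z,v)\le 1/2$ (or $\sigma(z,v)\le t/2$), then $\rho_B(z,v)\le\log 3$. In the Euclidean unit ball this reads $\rho(0,v)=\log\frac{1+|v|}{1-|v|}=\log 3$ at $|v|=1/2$.

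Spherical balls are not round in Euclidean terms after the M\"obius normalization. I would handle this by rotating $z$ to $0$, where the spherical ball of radius $t$ is a Euclidean ball of radius $\tan(t/2)$. Then I would check that spherical radius $t/2$ maps into Euclidean radius at most half of that. This holds because $\tan(t/4)\le\tfrac12\tan(t/2)$ by convexity of $\tan$. This calibration, together with the requirement that $\hat{k}_U(z,v)\le 1/2$ forces $\sigma(z,v)\le t/2$, is the main obstacle.

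For the second point, take a path from $z$ to $v$ of $\hat k_U$-length at most $1/2$. Along it, $\sigma(\cdot,\partial U)\le t+\sigma(z,\cdot)$. Integrating gives $\log(1+\sigma(z,v)/t)\le 1/2$, so $\sigma(z,v)\le(e^{1/2}-1)t<t/2$. Combining the two points, $\mu_U(z,v)\le\Phi(\log 3)$ whenever $\hat{k}_U(z,v)\le 1/2$, and then $\mu_U(u,v)\le 2\Phi(\log 3)$ for $u,v$ in such a $\hat k_U$-ball about $z$, by the triangle inequality.

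\emph{Chaining.} Given $x,y$, take a near-geodesic path for $\hat{k}_U$ of length $L\le\hat{k}_U(x,y)+\epsilon$. Subdivide it into $m\le 2L+1$ consecutive points with $\hat{k}_U$-steps at most $1/2$. Apply the local estimate to each step, about its initial point, getting $\mu_U(z_i,z_{i+1})\le 2\Phi(\log 3)$. The triangle inequality for $\mu_U$, which is a metric since $\operatorname{cap}\partial U>0$, gives
\[ \mu_U(x,y)\le 2\Phi(\log 3)(2L+1)=4\Phi(\log 3)L+2\Phi(\log 3). \]
Letting $\epsilon\to0$ finishes the proof. The per-step budget has slack, so even a cruder per-step bound of $2\Phi(\log 3)$ yields exactly the stated $\hat b_1,\hat b_2$.

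If $\infty\in U$, the same argument runs in a rotated chart, since everything used is M\"obius or rotation invariant.
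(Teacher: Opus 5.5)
Your argument has the same structure as the paper's: rotate $z$ to $0$, compare $\mu_U$ with $\mu_B$ for the spherical ball $B$ of radius $t=\sigma(z,\partial U)$ (the Euclidean ball of radius $\tan(t/2)$), use $\tan(t/4)\le\tfrac12\tan(t/2)$ to get $\rho_B\le\log 3$, then chain along a near-geodesic. However, your local estimate rests on a false numerical inequality. From $\hat k_U(z,v)\le 1/2$ you correctly get $\log(1+\sigma(z,v)/t)\le 1/2$. That gives $\sigma(z,v)\le(e^{1/2}-1)t$, but $e^{1/2}-1\approx 0.649$, which is not less than $1/2$. So $v$ need not lie in the spherical ball of radius $t/2$. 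The comparison with $B$ then only yields $|v|\le(e^{1/2}-1)\tan(t/2)$ in the rotated chart, so $\rho_B(z,v)\le\log\frac{e^{1/2}}{2-e^{1/2}}\approx 1.55$. Since this exceeds $\log 3\approx 1.10$, your claim that $\mu_U(z,v)\le\Phi(\log 3)$ whenever $\hat k_U(z,v)\le 1/2$ is not established. Your own integration shows that leaving the spherical ball of radius $t/2$ about $z$ costs $\hat k_U$-length at least $\log(3/2)\approx 0.405$, so the step threshold must be at most that.

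The repair is cheap and lands exactly on the paper's proof. Use $\hat k_U$-steps of length at most $1/4$; then $e^{1/4}-1<1/2$ and each step costs at most $\Phi(\log 3)$. A near-geodesic of length $L\le\hat k_U(x,y)+\epsilon$ is covered by at most $4L+1\le 4\hat k_U(x,y)+2$ such steps, which gives the stated $\hat b_1,\hat b_2$. Equivalently, your per-step budget of $2\Phi(\log 3)$ for a $1/2$-step is met by splitting it into two $1/4$-steps.

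Alternatively, keep the $1/2$-steps and use that $\Phi$ is subadditive. This holds because $\Phi\circ\rho=\mu_{\B^n}$ is a metric and $\rho$ is geodesic. Then $\Phi(1.55)\le\Phi(2\log 3)\le 2\Phi(\log 3)$, but this needs to be said.

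You should also record, as the paper does, that $t<\pi$. Otherwise $\partial U$ would be the single antipodal point, contradicting $\operatorname{cap}\partial U>0$. This is what makes $B$ a genuine M\"obius image of $\B^n$, and it puts $t/2$ in the range where your convexity argument for $\tan$ applies.
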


\begin{proof}
We first show that $\mu_U(x,y) \leq b:=\Phi(\log 3)$ whenever $\hat{k}_U(x,y) \leq 1/4$. Rotations of $\S^n$ are M\"obius maps preserving $\sigma$, hence preserving $\mu_U$ and $\hat{k}_U$, so replacing $U$ by its image under a rotation we may assume $x = 0$. Put $\delta = \sigma(0,\partial U)$; then $0<\delta<\pi$, as $\delta = \pi$ would force $\partial U = \{\infty\}$, contradicting $\operatorname{cap}\partial U >0$. For $z$ in the spherical ball $\{ \sigma(0,z) < \delta/2 \}$ we have $\sigma(z,\partial U) \leq \sigma(z,0) + \delta \leq 3\delta/2$, so every path from $0$ leaving this ball has $\hat{k}_U$-length at least $(\delta/2)/(3\delta/2) = 1/3 > 1/4$. Hence $\sigma(0,y) < \delta/2$. Now the spherical ball $\{ \sigma(0,z) < t \}$, for $0<t<\pi$, is the Euclidean ball $B(0,r_t)$ with $r_t = \tan(t/2)$, and $B(0,r_{\delta}) \subset U$ by the definition of $\delta$. By monotonicity and conformal invariance of $\mu$,
\[ \mu_U(0,y) \leq \mu_{B(0,r_\delta)}(0,y) = \Phi ( \rho_{B(0,r_\delta)}(0,y)) = \Phi \left ( \log \frac{ r_\delta + |y| }{ r_\delta - |y| } \right ) \leq \Phi(\log 3),\]
since $|y| \leq r_{\delta/2} \leq r_\delta / 2$; the last inequality is $\tan(\delta/4)/\tan(\delta/2) = \tfrac12 (1-\tan^2(\delta/4)) \leq \tfrac12$.

For general $x,y$ put $t = \hat{k}_U(x,y)$ and choose a path from $x$ to $y$ of $\hat{k}_U$-length at most $t + 1/8$. Subdividing it into $m\leq 4t+2$ pieces of $\hat{k}_U$-length at most $1/4$ and using the triangle inequality for $\mu_U$ gives $\mu_U(x,y) \leq mb \leq 4bt+2b = \hat{b}_1 t + \hat{b}_2$.
\end{proof}

\begin{lemma}
\label{lem:transfer}
Let $U$ be a proper subdomain of $\S^n$ with $\partial U$ $\alpha$-uniformly perfect. Then $\S^n\setminus U$ is $\alpha'$-uniformly perfect with $\alpha' = \min\{ \alpha/3, 1/8\}$.
\end{lemma}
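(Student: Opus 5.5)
Write $E = \S^n\setminus U$. Note that $\partial U \subset E$, and $E$ has at least two points. We must show: for every $x\in E$ and every $0<r<\diam_\sigma E$, the annulus $A(x,r) = \{ y : \alpha' r \leq \sigma(x,y)\leq r\}$ meets $E$. Since $\diam_\sigma \partial U \leq \diam_\sigma E \leq \pi$, we have to handle both the position of $x$ (on $\partial U$ or in the interior of $E$) and the range of $r$ relative to $\diam_\sigma\partial U$.

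\emph{Case 1: $x\in\partial U$ and $r<\diam_\sigma \partial U$.} Then $A(x,r)\supset\{\alpha r\le\sigma(x,y)\le r\}$, which meets $\partial U\subset E$ by hypothesis.

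\emph{Case 2: $x\in\partial U$ and $\diam_\sigma\partial U\le r<\diam_\sigma E$.} Choose $z\in E$ with $\sigma(x,z)>r/2$; this is possible since $\diam_\sigma E>r$ implies some point of $E$ lies at distance $>r/2$ from $x$. If $\sigma(x,z)\le r$ we are done, as $r/2\geq\alpha' r$. Otherwise, join $x$ to $z$ by a spherical geodesic. If this geodesic lies in $E$, it contains a point at distance exactly $r/2$. If it meets $U$, then, since $z\in E$, it exits $U$ and hits $\partial U$ at a point $w$. Choosing $w$ as the last boundary point on the arc before $z$ if the final segment is in $U$, or otherwise picking a point of the arc in $E$ near distance $r/2$, we get either a point of $E$ at distance in $[r/2,r]$ from $x$, or a point $w\in\partial U$ with $\sigma(x,w)>r/2\ge\diam_\sigma\partial U/2$. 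The latter contradicts $\sigma(x,w)\le\diam_\sigma\partial U\le r$ only partially, so we use continuity instead. The function $t\mapsto\sigma(x,\gamma(t))$ along the geodesic is continuous. On the set where $\gamma\in E$ we look for a value in $[r/8,r]$. If none exists, the arc passes from $\{\sigma<r/8\}$ to $\{\sigma>r\}$ through $U$, and then hits $\partial U$ at points beyond distance $r$ from $x$. That is impossible, because $\partial U\subset \overline{B}_\sigma(x,\diam_\sigma\partial U)\subset\overline B_\sigma(x,r)$. The only remaining possibility is that the geodesic is in $U$ on the whole stretch where $\sigma(\cdot,x)\in[r/8,r]$, and then the first exit point of $U$ is a boundary point at distance $>r$, again a contradiction. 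This is where the constant $1/8$ comes from.

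\emph{Case 3: $x\in \operatorname{int}E$.} Let $\delta=\sigma(x,\partial U)>0$ and pick $p\in\partial U$ with $\sigma(x,p)=\delta$. The open ball $B_\sigma(x,\delta)$ meets no point of $\partial U$, and it contains $x\in E$, so it lies entirely in $E$.
\begin{itemize}
\item If $r\le\delta$, then the point on a geodesic from $x$ at distance $r/2$ lies in $E$.
\item If $\delta<r$ and $\delta\ge r/3$, then $p$ itself lies in $A(x,r)$.
\item If $\delta<r/3$, apply Case 1 or Case 2 at $p$ with radius $r'=2r/3$. This gives a point $q\in E$ with $\alpha'' r'\le\sigma(p,q)\le r'$, where $\alpha''$ is the constant obtained at $p$. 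Then $\sigma(x,q)\le\delta+2r/3\le r$ and $\sigma(x,q)\ge\alpha'' (2r/3)-r/3$. This lower bound is too weak, so instead apply the case at $p$ with radius $r$ directly, landing $q$ with $\sigma(p,q)\in[\alpha r, r-\delta]$ via radius $r-\delta\ge 2r/3$. This gives $\sigma(x,q)\ge \alpha(2r/3)-\delta$. When this is small, replace $q$ by $p$ or use the alternative $\sigma(x,q)\ge \sigma(p,q)-\delta$ with the scale chosen so that $\sigma(p,q)\ge 2\delta$ or $\delta$ itself exceeds $\alpha r/3$. Splitting on whether $\delta\ge\alpha r/3$ (then $p$ works) or $\delta<\alpha r/3$ (then $\sigma(x,q)\ge 2\alpha r/3-\alpha r/3=\alpha r/3$) yields the constant $\alpha/3$.
\end{itemize}
We must also check $r-\delta<\diam_\sigma E$ to apply the earlier cases at $p$, which is immediate.

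The main obstacle is Case 2, together with the radius range above $\diam_\sigma\partial U$ in Case 3. There, uniform perfectness of $\partial U$ says nothing, and the required point of $E$ must come from the connectedness/geodesic argument. I expect the bookkeeping of constants there to be where the $1/8$ arises, and I would verify it carefully.
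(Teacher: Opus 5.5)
Your Case 1, the first two bullets of Case 3, and the final form of your Case 2 argument are sound. The last bullet of Case 3, however, has a hole, precisely in the regime you flag at the end. Your closing estimate $\sigma(x,q)\geq \tfrac{2\alpha r}{3}-\tfrac{\alpha r}{3}$ uses uniform perfectness of $\partial U$ at $p$ at scale $r-\delta$. That is only available when $r-\delta<\diam_\sigma\partial U$. When $r-\delta\geq\diam_\sigma\partial U$ you must fall back on Case 2 at $p$. With the conclusion you actually state for Case 2, a point $q\in E$ with $\sigma(p,q)\in[(r-\delta)/8,\,r-\delta]$, you only get $\sigma(x,q)\geq (r-\delta)/8-\delta=(r-9\delta)/8$. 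This is nonpositive once $\delta\geq r/9$, which your split $\delta<\alpha r/3$ allows whenever $\alpha>1/3$. When $\alpha'=1/8$ it is below $\alpha' r$ for every $\delta>0$, so no choice of split rescues it. As written, points $x\in\operatorname{int}E$ with $\delta$ small and $r-\delta\geq\diam_\sigma\partial U$ are not covered.

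The repair is already inside your Case 2, once the false starts about $w$ are deleted. The re-entry argument proves more than a point at distance in $[r/8,r]$: it shows $\gamma(r)\in E$. Indeed, if $\gamma(r)\in U$, the first $t^*>r$ with $\gamma(t^*)\in E$ gives $\gamma(t^*)\in\partial U$ at distance $t^*>r$ from $x$, which is impossible. The only property of the centre this uses is $\partial U\subset\overline{B}_\sigma(x,r)$. In the missing branch you have $\partial U\subset\overline{B}_\sigma(p,\diam_\sigma\partial U)\subset\overline{B}_\sigma(x,\delta+\diam_\sigma\partial U)\subset\overline{B}_\sigma(x,r)$. So run the argument from $x$ itself: either your $z\in E$ has $\sigma(x,z)\in(r/2,r]$, or $\gamma(r)\in E$. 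In particular the $1/8$ has nothing to do with Case 2.

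With this fix your proof is complete, and it differs from the paper's. With $t=\sigma(x,\partial U)$ and $s=r-t\geq D=\diam_\sigma\partial U$, the paper uses connectedness of $\S^n\setminus\overline{B}_\sigma(x',D)$ to split into two cases. If this set lies in $E$, it takes $y$ with $\sigma(x,y)=r$ and $\sigma(x',y)\geq D$. If it lies in $U$, then $r<2D$ and a boundary point at distance $\geq D/2$ from $x'$ works; this is the only place $1/8$ enters. Your single-geodesic argument avoids that dichotomy. Combined with your split at $\delta\geq\alpha r/3$, it actually gives the lemma with $\alpha'=\alpha/3$, which implies the stated constant.
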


\begin{proof}
Write $E' = \S^n\setminus U$, a closed set containing $\partial U$, and let $x\in E'$ and $0<r<\operatorname{diam}_\sigma E'$; put $A = \{ y : \alpha' r \leq \sigma(x,y) \leq r\}$, which is non-empty as $r < \pi$. Let $t = \sigma(x,\partial U)$, attained at $x'\in\partial U$. We first note that $\overline{B}_\sigma(x,t) \subset E'$: if $y\in U$ with $\sigma(x,y)\leq t$, the last point $z$ of a spherical geodesic from $x$ to $y$ that lies in $E'$ belongs to $\partial U$ and satisfies $\sigma(x,z) < \sigma(x,y) \leq t$, a contradiction. 

If $r\leq t$, then $A \subset \overline{B}_\sigma(x,t) \subset E'$. If $\alpha' r \leq t < r$, then $x'\in A\cap E'$. So assume $t<\alpha' r$ and put $s = r-t \in ((1-\alpha')r, r)$.

If $s < \operatorname{diam}_\sigma \partial U$, uniform perfectness of $\partial U$ at $x'$ gives $y\in\partial U$ with $\alpha s \leq \sigma(x',y) \leq s$; then $\sigma(x,y) \leq t+s = r$ and $\sigma(x,y) \geq \alpha s - t \geq \alpha(1-\alpha')r - \alpha' r \geq (\alpha - 2\alpha')r \geq \alpha' r$, so $y\in A\cap E'$.

If $s \geq \operatorname{diam}_\sigma\partial U =: D$, then $\partial U\subset \overline{B}_\sigma(x',D)$, and the connected set $\S^n\setminus \overline{B}_\sigma(x',D)$ (recall $n\geq 2$ and $D\leq s<\pi$) is disjoint from $\partial U$, hence contained in $U$ or in $E'$. In the latter case $E'$, being closed, contains $\S^n\setminus B_\sigma(x',D)$; pick $y$ with $\sigma(x,y) = r$ and $\sigma(x',y)\geq D$, which exists since the sphere $\{ y : \sigma(x,y) = r\}$ contains points at distance $\min\{ r+t , 2\pi - r - t\} \geq r-t = s \geq D$ from $x'$; then $y\in A\cap E'$. In the former case $E'\subset \overline{B}_\sigma(x',D)$, so $r<\operatorname{diam}_\sigma E' \leq 2D$; choose $y\in\partial U$ with $\sigma(x',y)\geq D/2$. Then $\sigma(x,y) \leq D+t \leq r$ and $\sigma(x,y) \geq D/2 - t > r/4 - \alpha' r \geq \alpha' r$, as $\alpha'\leq 1/8$. So $y\in A\cap E'$ in this case too.
\end{proof}

\begin{lemma}
\label{lem:mucomp}
If $U$ is uniform with uniformly perfect boundary, then $\operatorname{cap}\partial U>0$, so $\mu_U$ is a metric, and there is a constant $\hat{c}_1>0$, depending only on $n$, the uniformity constant of $U$, the uniform perfectness constant of $\partial U$ and the spherical diameter of $\partial U$, such that
\[ \hat{c}_1 \hat{k}_U \leq \mu_U \leq \hat{b}_1 \hat{k}_U +\hat{b}_2.\]
\end{lemma}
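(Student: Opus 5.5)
The plan has three parts: positive capacity, the upper bound, and the lower bound. The upper bound is just Lemma~\ref{lem:muupper}, once positive capacity is known.

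\emph{Capacity.} By Lemma~\ref{lem:list}(b), uniformly perfect sets have positive capacity \cite[Theorem 4.1 and Corollary 4.2]{JV96}. Since $\partial U$ is uniformly perfect in $\sigma$, it is so in the chordal metric with comparable constants, so $\operatorname{cap}\partial U>0$ and $\mu_U$ is a metric.

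\emph{Lower bound.} I would argue in two steps, $\mu_U \gtrsim \hat\jmath_U$ and $\hat k_U \lesssim \hat\jmath_U$.
\begin{enumerate}
\item By Lemma~\ref{lem:transfer}, $\S^n\setminus U$ is $\alpha'$-uniformly perfect. By \cite[Theorem 4.4]{JV96} it then satisfies Vuorinen's thickness condition, so \cite[Theorem 2.27]{Vuo87} gives $\mu_U \geq a_1 \hat\jmath_U$. Here $a_1$ depends on $n$, $\alpha'$ and (through the normalisation of the thickness condition at large scales) on $\operatorname{diam}_\sigma\partial U$.
\item Uniformity gives, by the Gehring--Osgood characterization in the spherical form \cite[Chapter 7]{BHK}, $\hat k_U \leq c\,\hat\jmath_U + c'$. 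With some care one gets the purely multiplicative form $\hat k_U \leq c''\hat\jmath_U$ for uniform domains; this is known for $A$-uniform spaces, via $\hat k_U\le 4A^2\hat\jmath_U$ in BHK-type estimates.
\end{enumerate}
Combining the two gives $\mu_U \geq (a_1/c'')\hat k_U$, so we may take $\hat c_1 = a_1/c''$.

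\emph{Main obstacle.} The delicate point is the small-scale regime in step 2. If only the additive form $\hat k_U \le c\hat\jmath_U + c'$ is available, the additive constant must be absorbed at points where $\hat k_U(x,y)$ is small.

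For $\hat k_U(x,y)\geq 1$, absorb $c'$ into the constant: $\hat k_U\le (c+c')\max\{\hat\jmath_U,\dots\}$, using that $\hat\jmath_U$ is bounded below whenever $\hat k_U\ge 1$ by uniformity.

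For $\hat k_U(x,y)<1$, argue directly. Then $y$ lies in a spherical ball around $x$ of radius comparable to $\sigma(x,\partial U)$, and $\hat k_U \asymp \hat\jmath_U \asymp \sigma(x,y)/\sigma(x,\partial U)$ there. It remains to check that $\mu_U\geq a_1\hat\jmath_U$ is linear rather than only logarithmic in this regime. I expect this from Vuorinen's bound $\mu_U(x,y)\gtrsim \log(1+\cdot)$ together with the elementary inequality $\log(1+t)\geq t/2$ for $t\leq 1$.

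I would first try to cite the multiplicative estimate $\hat k_U\le c\,\hat\jmath_U$ directly from \cite{BHK} or \cite{Vai88}, and fall back on the case split above if that citation is not available. Tracking the constants through these steps shows that $\hat c_1$ depends only on the stated data.
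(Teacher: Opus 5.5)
Your proposal is correct and follows essentially the same route as the paper: positive capacity from \cite{JV96}, the upper bound from Lemma~\ref{lem:muupper}, and the lower bound obtained by chaining Lemma~\ref{lem:transfer}, \cite[Theorem 4.4]{JV96} and \cite[Theorem 2.27]{Vuo87}, then applying the multiplicative Gehring--Osgood inequality for $A$-uniform spaces \cite[Lemma 2.13, (2.16)]{BHK}, which is directly available, so your fallback case split is not needed. The one detail you skip is that Vuorinen's bound is stated for the chordal $j_U$; the paper converts it using $q\leq\sigma\leq\frac{\pi}{2}q$ and $\log(1+\lambda u)\leq\lambda\log(1+u)$ for $\lambda\geq 1$, which gives $\hat\jmath_U\leq\frac{\pi}{2}j_U$.
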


\begin{proof}
Uniformly perfect sets have positive capacity by \cite[Theorem 4.1 and Corollary 4.2]{JV96}, and the upper bound is Lemma \ref{lem:muupper}. For the lower bound, by Lemma \ref{lem:transfer} the closed set $E = \S^n\setminus U$ is uniformly perfect, with constant depending only on that of $\partial U$, and $\operatorname{diam}_\sigma E \geq \operatorname{diam}_\sigma \partial U$. By \cite[Theorem 4.4]{JV96}, $E$ satisfies Vuorinen's metric thickness condition \cite[(2.21)]{Vuo87} with parameters $r_0 = \tfrac14 q(E) \geq \tfrac14 q(\partial U)$ and $\delta = \delta(n,\alpha')$. Hence \cite[Theorem 2.27]{Vuo87} gives $\mu_U \geq a_1 j_U$ with $a_1 = a_1(n,r_0,\delta)$, where $j_U$ is defined as $\hat{\jmath}_U$ but with the chordal metric; since $q\leq\sigma\leq\tfrac{\pi}{2}q$ and $\log(1+\lambda u) \leq \lambda\log(1+u)$ for $\lambda\geq 1$, see \cite[(2.12)]{BHK}, we have $\hat{\jmath}_U \leq \tfrac{\pi}{2} j_U$. Finally, in an $A$-uniform metric space the Gehring--Osgood inequality $\hat{k}_U \leq 4A \hat{\jmath}_U$ holds, see \cite[Lemma 2.13, (2.16)]{BHK}. Hence $\hat{c}_1 = a_1/(2\pi A)$ works.
\end{proof}

\begin{lemma}
\label{lem:unifgromov}
Let $U$ be uniform. Then $(U,\hat{k}_U)$ is a proper geodesic Gromov hyperbolic space, and every Gromov sequence $(z_k)$ for $\hat{k}_U$ converges in $\S^n$ to a point of $\partial U$, equivalent Gromov sequences having the same limit. Moreover, $\hat{k}_U(x,y) \leq M$ implies $\sigma(x,y) \leq (e^M-1) \sigma(x,\partial U)$.
\end{lemma}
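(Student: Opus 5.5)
The plan is to obtain essentially everything from the theory of Bonk, Heinonen and Koskela \cite{BHK}, applied to the $A$-uniform metric space $(U,\sigma)$, and to prove the final estimate by a direct integration.

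\emph{Properness and geodesicity.} The space $(U,\sigma)$ is locally compact, rectifiably connected and noncomplete. By \cite[Proposition 2.8]{BHK} the quasihyperbolic metric $\hat{k}_U$ is then complete, proper and geodesic, and it induces the original topology.

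\emph{Gromov hyperbolicity and the boundary.} By \cite[Theorem 3.6]{BHK}, the space $(U,\hat{k}_U)$ is $\delta$-hyperbolic with $\delta=\delta(A)$. By \cite[Proposition 3.12]{BHK}, the Gromov boundary of $(U,\hat{k}_U)$ is identified with the metric boundary $\partial_\sigma U$ of $(U,\sigma)$. The identification is natural: a sequence converges at infinity in the Gromov sense if and only if it converges in the completion $\overline{U}^\sigma$ to a point of $\partial_\sigma U$. Equivalent Gromov sequences correspond to the same boundary point, and distinct boundary points correspond to inequivalent sequences.

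Since $\S^n$ is compact, the completion of $(U,\sigma)$ is the closure $\overline U\subset\S^n$, and $\partial_\sigma U=\partial U$. Consequently every Gromov sequence converges in $\S^n$ to a point of $\partial U$, and equivalent Gromov sequences share their limit. One point needs care here: BHK state their results for the Gromov boundary of proper geodesic hyperbolic spaces, where sequential and ray definitions agree. I would cite that equivalence, for instance \cite[III.H.3]{BH99}, to match the sequential definition of Section \ref{ss:metrics}. I expect this bookkeeping of identifications to be the only real obstacle, and it is mainly a matter of citing the right statements.

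\emph{The distance estimate.} Let $\gamma$ be a $\hat{k}_U$-geodesic from $x$ to $y$ of length at most $M$, parametrized by spherical arclength $s$, and set $\phi(s)=\sigma(\gamma(s),\partial U)$. Since $\phi$ is $1$-Lipschitz, $\phi(s)\le \phi(0)+s$. Therefore
\[ M\ge \int_0^{\ell}\frac{ds}{\phi(0)+s}=\log\Bigl(1+\frac{\ell}{\sigma(x,\partial U)}\Bigr),\]
where $\ell$ is the spherical length of $\gamma$. It follows that
\[ \sigma(x,y)\le \ell\le (e^M-1)\,\sigma(x,\partial U). \]
This is the standard bound \cite[(2.4)]{BHK}.
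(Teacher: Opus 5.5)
Your route is the paper's: hyperbolicity from \cite[Theorem 3.6]{BHK}, the boundary from \cite[Proposition 3.12]{BHK}, and the last estimate, which your integration argument proves correctly and which is \cite[(2.4)]{BHK}. You should say explicitly that Proposition 3.12 applies because $(U,\sigma)$ is bounded. The gap is in the step you call bookkeeping. The paper uses \cite[Proposition 3.12]{BHK} only for geodesic rays: every $\hat k_U$-ray has a $\sigma$-endpoint in $\partial U$, and ray $\mapsto$ endpoint is a bijection from the Gromov boundary onto $\partial U$. A bijection between boundaries does not by itself show that an arbitrary Gromov sequence $(z_k)$ converges in $\S^n$, let alone to the endpoint of the ray in its class. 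Nor can \cite[III.H.3]{BH99} fill this in: it relates ray and sequential boundaries inside $(U,\hat k_U)$ and never involves $\sigma$. What is missing is a comparison between Gromov convergence and $\sigma$-convergence. This is exactly what the paper's extra paragraph, using the ``Moreover'' estimate, is for.

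To close it, let $\gamma$ be a ray from $o$ in the class of $(z_k)$, with endpoint $w$. Do not try to place $z_k$ itself within bounded $\hat k_U$-distance of $\gamma$: a sequence approaching $w$ tangentially, already in a hemisphere, shows this fails. The paper's sentence asserting $\hat k_U(z_k,\gamma(s_k))\le M$ needs the same correction. Instead, since $(z_k|\gamma(t))_o\to\infty$, thin triangles give points $p_k\in[o,z_k]$ and $s_k\to\infty$ with $\hat k_U(p_k,\gamma(s_k))\le M(\delta)$. Your estimate then gives $\sigma(p_k,\gamma(s_k))\le (e^{M}-1)\,\sigma(\gamma(s_k),\partial U)\to 0$, so $p_k\to w$ and $\sigma(p_k,\partial U)\to 0$. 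Quasihyperbolic geodesics in uniform domains are uniform curves (Gehring--Osgood \cite{GO79}, and \cite{BHK} for uniform spaces). Hence the spherical lengths of the two subarcs of $[o,z_k]$ satisfy $\min\{\ell_\sigma[o,p_k],\ell_\sigma[p_k,z_k]\}\le A'\,\sigma(p_k,\partial U)$, with $A'$ depending only on $A$. As $\ell_\sigma[o,p_k]\ge\sigma(o,p_k)\to\sigma(o,w)>0$, this forces $\sigma(p_k,z_k)\le\ell_\sigma[p_k,z_k]\to 0$, so $z_k\to w$. Equivalent sequences determine the same ray class and hence the same $w$. Alternatively, if you can quote a statement in \cite{BHK} that the identity map extends continuously from the Gromov closure of $(U,\hat k_U)$ onto $\overline{U}$, your citation suffices as it stands.
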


\begin{proof}
The last assertion is the elementary inequality $\log ( 1 + \sigma(x,y)/\min\{\sigma(x,\partial U),\sigma(y,\partial U)\}) \leq \hat{k}_U(x,y)$ of \cite[(2.4)]{BHK}.

That $(U,\hat{k}_U)$ is proper, geodesic and Gromov hyperbolic is the first assertion of \cite[Theorem 3.6]{BHK} applied to the uniform space $(U,\sigma)$, which requires no boundedness; see \cite[Chapter 7]{BHK} for domains in $\S^n$. As $(U,\sigma)$ is bounded, \cite[Proposition 3.12 (b),(c),(d)]{BHK} shows that every $\hat{k}_U$-geodesic ray has finite $\sigma$-length and a unique endpoint in $\partial U$, and that ray $\mapsto$ endpoint is a bijection from the Gromov boundary onto $\partial U$. Given a Gromov sequence $(z_k)$ with base point $o$, it defines a point $\xi$ of the Gromov boundary; let $\gamma$ be a geodesic ray from $o$ to $\xi$ with endpoint $w\in\partial U$. The geodesic segments $[o,z_k]$ subconverge to a ray equivalent to $\gamma$, so for large $k$ there are $s_k \to \infty$ with $\hat{k}_U(z_k,\gamma(s_k)) \leq M$, where $M$ depends only on the hyperbolicity constant. Since $\gamma(s) \to w$ we have $\sigma(\gamma(s_k),\partial U) \to 0$, and the estimate of the first paragraph gives $\sigma(z_k,\gamma(s_k)) \to 0$, so $z_k \to w$. Two equivalent Gromov sequences define the same $\xi$, hence the same $w$.
\end{proof}

\begin{proof}[Proof of Theorem \ref{thm:stable}]
Let $U$ be a stable component of $F(f)$ which is uniform with uniformly perfect boundary. We first arrange that $U\subset\R^n$: since $f$ has degree at least $2$, $J(f) \neq \emptyset$, and conjugating $f$ by a M\"obius map sending a point of $J(f)$ to $\infty$ replaces $f$ by a uniformly quasiregular map, $U$ by a stable component of its Fatou set contained in $\R^n$, and preserves all the hypotheses and conclusions: $\mu$ is M\"obius invariant, uniformity is invariant under M\"obius maps by \cite[Theorem 2.10]{Vai88}, uniform perfectness with respect to $\sigma$ is preserved because M\"obius maps of $\S^n$ are bi-Lipschitz for $\sigma$ (with constant depending on the map), and the dynamics is conjugated. So we assume $U\subset\R^n$, possibly unbounded, with $\infty\in\partial U$ allowed.

Let $G$ be the invariant structure of $f$ restricted to $U$; then $f|_U:U\to U$ is uniformly quasiregular with $f^*G = G$ on $U$, and $d_G$ is a metric on $U$ by Proposition \ref{prop:dGmetric}, using Lemma \ref{lem:mucomp}. Theorem \ref{thm:sp} holds, and by \eqref{eq:mucomps} and Lemma \ref{lem:mucomp},
\begin{equation}
\label{eq:dGkhat}
\lambda^{-n} \hat{c}_1 \hat{k}_U \leq d_G \leq \lambda^n (\hat{b}_1 \hat{k}_U + \hat{b}_2).
\end{equation}
This replaces Lemma \ref{lem:roughhyp} in establishing that $d_G$ induces the topology of $U$, that $d_G$-bounded sets are exactly the relatively compact subsets of $U$, and that $(U,d_G)$ is proper, since $\hat{k}_U$ has these properties. Define chain metrics on $U$ as in Definition \ref{def:chain} with $r \geq \max\{ r_1 , \lambda^n(\hat{b}_1+\hat{b}_2)\}$, so that steps of $\hat{k}_U$-length at most $1$ are admissible. Then Lemma \ref{lem:chainprop} and Proposition \ref{prop:roughgeos} hold with $\hat{k}_U$ in place of $\rho$ and \eqref{eq:dGkhat} in place of Lemma \ref{lem:roughhyp}, so the identity is a quasi-isometry $(U,d) \to (U,\hat{k}_U)$, and Theorem \ref{thm:gromovchain} holds with $Y = (U,\hat{k}_U)$ by Lemma \ref{lem:unifgromov}, with $\delta_0$ the hyperbolicity constant of $(U,\hat{k}_U)$ from \cite[Theorem 3.6]{BHK}: its proof used of $Y$ only that it is proper, geodesic and Gromov hyperbolic and that its Gromov sequences converge in the ambient space to boundary points, equivalent sequences to the same point.

The arguments of Section 7 now apply to $f|_U$ with $d$ the chain metric on $U$, with one change: in Proposition \ref{prop:escape}~(a),(b),(d), the fact that a $\rho$-ball of fixed radius about points tending to $\S^{n-1}$ has Euclidean diameter tending to $0$ is replaced by the last assertion of Lemma \ref{lem:unifgromov} together with $d \geq \tfrac12 \lambda^{-n}\hat{c}_1 \hat{k}_U$, Euclidean convergence by convergence in $\S^n$, and $\S^{n-1}$ by $\partial U$ in Proposition \ref{prop:escape}~(c). In the bounded case, \cite[Propositions 4.6 and 4.9]{HMM} show that $U$ is a (super-)attracting basin or a rotation domain, exactly as in Proposition \ref{prop:hmm}. In the escaping case, the proof of Theorem \ref{thm:dw} gives $w\in\partial U$ with $f^m\to w$ locally uniformly on $U$; since $f$ is continuous on $\S^n$, $f(w) = \lim_m f(f^m(x)) = w$, so $w$ is a fixed point of $f$ and $U$ is a parabolic basin, and $w$ is unique because the full sequence converges.

Finally, the listed classes satisfy the hypotheses. Quasiballs are uniform domains, see \cite[Theorem 2.15]{MS79}, and, in common with all uniform domains with connected boundary, have uniformly perfect boundary by \cite[Example 5.1(1)]{JV96}; so do uniform domains whose boundary has finitely many non-degenerate components; and if $\partial U = J(f)$ then $\partial U$ is uniformly perfect by \cite{FN11}.
\end{proof}

\begin{proof}[Proof of Theorem \ref{thm:coarse}]
This is the case $G\equiv I$ of the argument just given, with $\lambda = 1$ and $d_G = \mu_U$, and no dynamics: for $r\geq \max\{r_1,\hat{b}_1+\hat{b}_2\}$ and $\psi\in\Psi$, Lemma \ref{lem:chainprop} with $\hat{k}_U$ in place of $\rho$ shows that $(U,d_r^{\psi})$ is quasi-isometric via the identity to $(U,\mu_U)$ and to $(U,\hat{k}_U)$, Proposition \ref{prop:roughgeos} shows that it is roughly geodesic, and Theorem \ref{thm:gromovchain} with $Y = (U,\hat{k}_U)$, which is legitimate by Lemma \ref{lem:unifgromov}, shows that it is Gromov hyperbolic and that its Gromov sequences and their equivalence agree with those of $\hat{k}_U$. Since the Gromov boundary of $(U,\hat{k}_U)$ is $\partial U$ by \cite[Proposition 3.12]{BHK}, so is that of $(U,d_r^{\psi})$.
\end{proof}

We remark that if $U$ is a quasiball, Lemmas \ref{lem:muupper}--\ref{lem:unifgromov} can be avoided: $F = \varphi^{-1}\circ f\circ\varphi$ is uniformly quasiregular on $\B^n$ for a quasiconformal map $\varphi:\S^n\to\S^n$ with $\varphi(\B^n) = U$, Theorem \ref{thm:dw} applies to $F$, and $\varphi$ extends to a homeomorphism of $\overline{\B^n}$ onto $\overline{U}$.

A bounded domain is uniform if and only if its quasihyperbolic metric is Gromov hyperbolic and the natural map identifies the Gromov boundary with the Euclidean boundary quasisymmetrically \cite[Theorem 1.11]{BHK}, and the same holds for domains in $\S^n$ with the spherical metric \cite[Theorems 3.6 and 7.11]{BHK}. Therefore the ``uniform" condition is the natural one for Theorem \ref{thm:gromovchain}. The ``uniformly perfect" condition is needed for the lower bound in Lemma \ref{lem:mucomp} and cannot be omitted: the punctured ball is a uniform domain, but $\mu$ does not see the puncture, while $\hat{k}$ does, see \cite[p. 91]{Vuo85} and \cite{Vuo87}.

\section{Examples}

In this section, we prove Theorem \ref{thm:examples}. We first address the hyperbolic case. The idea is to construct a conformal trap in the vein of Iwaniec and Martin \cite{IM96}.

\begin{lemma}
\label{lem:strip}
Let $h>0$. There exists $m_0 = m_0(h)$ such that for every integer $m\geq m_0$ the following holds. Put $a=m^{-1/2}$, $y_3 = \log m/(m-1)$, $y_c = \log m/(2(m-1))$, and let $\beta \in C^{\infty}(\R)$ satisfy $0\leq \beta \leq 1$, $\beta = 1$ on $(-\infty, y_3]$, $\beta = 0$ on $[h/2,\infty)$, and $|\beta'| \leq 4/h$. On the strip $T = \{ z \in \C : 0<\Im z < h \}$ define
\[ w(z) = z - i \operatorname{Log} ( 1 + a\beta ( \Im z ) e^{i(m-1)z} ).\]
Then:
\begin{enumerate}[(i)]
\item $w$ is $C^{\infty}$ on $T$, $w(z+2\pi) = w(z) + 2\pi$, and $w(z) = z$ for $\Im z \geq h/2$;
\item $\Im w(z) \geq \Im z - \log (1+a) > -a$;
\item $w$ is $K_m$-quasiregular on $T$ with $K_m \to 1$ as $m\to\infty$;
\item $w$ is holomorphic on $\{ \Im z \leq y_3 \}$ and has critical points there, all on the line $\Im z = y_c$, each of local index $2$; in particular $w$ is not injective.
\end{enumerate}
Let $Z_c = \{ z_c \}$ be the set of critical points of $w$, put 
\[ \epsilon := \tfrac12 \min \{ y_c , \pi/(m-1) \} ,\]
and define $\psi : T \to T$ by
\[ \psi ( z_c + re^{i\vartheta} ) = z_c + \sqrt{\epsilon r}\, e^{i\vartheta} \quad (z_c \in Z_c,\ 0\leq r \leq \epsilon), \qquad \psi = \operatorname{id} \text{ elsewhere},\]
and $W := w \circ \psi$. Then, after decreasing $\epsilon$ if necessary (depending on $h$ and $m$ only):
\begin{enumerate}[(i)]
\setcounter{enumi}{4}
\item $\psi$ is a $2$-quasiconformal homeomorphism of $T$ onto itself commuting with $z\mapsto z+2\pi$, and $W$ is $2K_m$-quasiregular on $T$ with $W = w$ off the disks $D(z_c,\epsilon)$; in particular (i) and (ii) hold with $W$ in place of $w$;
\item there are constants $L$ and $c_0>0$, depending only on $h$ and $m$, such that $|DW| \leq L$ and $J_W \geq c_0$ almost everywhere on $T$;
\item the branch set of $W$ is $Z_c$, each $z_c$ has local index $2$, and near $z_c$ the map $W$ is a winding map: $W(z_c + re^{i\vartheta}) = w(z_c) + \epsilon r e^{2i\vartheta} g_c ( \psi (z_c + re^{i\vartheta}))$ with $g_c$ holomorphic and non-vanishing on $D(z_c,\epsilon)$. In particular $W$ is not injective.
\end{enumerate}
\end{lemma}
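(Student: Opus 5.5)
The plan is to treat $w$ as a perturbation of the identity by the small quantity $u(z) := a\beta(\Im z)e^{i(m-1)z}$. Writing $z = x+iy$, we have $|e^{i(m-1)z}| = e^{-(m-1)y}$, so on $T$ we get $|u| \le a e^{-(m-1)y} \le a = m^{-1/2} < 1$. Hence $1+u$ stays in the disk $D(1,a)$, and the principal $\operatorname{Log}$ is smooth there.

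Part~(i) is then immediate:
\begin{itemize}
\item $u$ is $2\pi$-periodic in $x$, since $m-1$ is an integer;
\item $\beta(y)=0$ for $y\ge h/2$, so $w(z)=z$ there;
\item smoothness follows because $\beta\in C^\infty$ and $\operatorname{Log}$ is smooth on $D(1,a)$.
\end{itemize}
For~(ii), note that $\Im(-i\operatorname{Log}(1+u)) = -\log|1+u| \ge -\log(1+a)$. The final bound $\log(1+a) < a$ is the elementary inequality $\log(1+a)<a$ for $a>0$.

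For~(iii), compute the Beltrami coefficient. Only $\beta(\Im z)$ is non-holomorphic, and $\partial_{\bar z}\beta(\Im z) = \tfrac{i}{2}\beta'(y)$. This gives
\[ w_{\bar z} = -i\,\frac{a\,\tfrac{i}{2}\beta'(y)e^{i(m-1)z}}{1+u}, \qquad w_z = 1 + \frac{(m-1)a\beta e^{i(m-1)z}}{1+u}. \]
On the support of $\beta'$ we have $y\ge y_3$, so $e^{-(m-1)y}\le 1/m$. Using this, $|w_{\bar z}| \le \tfrac{2a}{h m(1-a)}$. Also $|w_z - 1| \le (m-1)a e^{-(m-1)y}/(1-a) \le a/(1-a)$ when $y \ge y_3$.

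The difficulty is $y<y_3$, where $|w_z-1|$ can be large. However, $\beta'=0$ there, so $w_{\bar z}=0$ and this region does not affect $K$. Hence $|\mu_w| \to 0$ as $m\to\infty$ uniformly on $T$, provided $J_w>0$ off a discrete set. That gives $K_m\to1$, and also~(iv) in the region $\Im z\le y_3$, where $\beta\equiv1$ and $w$ is holomorphic.

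For~(iv) itself, set $\zeta = e^{i(m-1)z}$. Then
\[ w' = 1+\frac{(m-1)a\zeta}{1+a\zeta} = \frac{1+ma\zeta}{1+a\zeta}, \]
so the critical points solve $\zeta = -1/(ma) = -m^{-1/2}$. Taking moduli, $e^{-(m-1)y} = m^{-1/2}$, which gives $y = \log m/(2(m-1)) = y_c$. This lies in $(0,y_3)$, and there are $m-1$ such points per period. Each is a simple zero of $w'$, hence has local index $2$.

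It remains to check that $w$ has no other degeneracy in $y_3<y<h/2$. This is the main obstacle. I would handle it with the bounds above: $|w_z| \ge 1 - a/(1-a)$, while $|w_{\bar z}|$ is much smaller, so $J_w>0$ for $m\ge m_0(h)$. At $y=y_3$ the formula $w' = (1+ma\zeta)/(1+a\zeta)$ with $|ma\zeta| = m^{-1/2}$ is also nonvanishing, which keeps the two regimes consistent.

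For~(v)--(vii), $\psi$ is the radial stretch $r\mapsto\sqrt{\epsilon r}$ on disjoint disks. The choice $\epsilon \le \pi/(2(m-1))$ makes the disks disjoint, and $\epsilon\le y_c/2$ keeps them inside $T$. A radial map $r\mapsto\rho(r)$ has dilatation $\max(\rho/(r\rho'), r\rho'/\rho)$, which here equals $2$. Composition then gives $W$ as $2K_m$-quasiregular, with periodicity inherited from $\psi$.

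Near $z_c$, write $w(\xi) = w(z_c) + (\xi-z_c)^2 g_c(\xi)$ with $g_c(z_c) = w''(z_c)/2 \ne 0$. Shrinking $\epsilon$ keeps $g_c$ nonvanishing on $D(z_c,\epsilon)$. Substituting $\xi=\psi(z)$ gives the winding form $\epsilon r e^{2i\vartheta}g_c$ claimed in~(vii). This form yields $|DW|\le L$ and $J_W \ge c_0\epsilon^2\cdot$const near $z_c$, because it is $r e^{2i\vartheta}$ times a smooth nonvanishing factor.

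Away from the disks, bounds for $|Dw|$ and $J_w$ follow from the formulas above by periodicity and continuity. Here $J_w$ is bounded below on the complement of the disks because its only zeros are at $Z_c$, and $w=\mathrm{id}$ near $y=h$. The branch set of $W$ is exactly $Z_c$, since $W$ is a local homeomorphism elsewhere.
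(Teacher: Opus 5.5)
Your route is the paper's: perturbation bounds on the band $y_3\le\Im z\le h/2$, explicit critical points, the radial stretch, the factorization $w(\xi)=w(z_c)+(\xi-z_c)^2g_c(\xi)$, and compactness modulo $2\pi$. However, the decisive estimate (vi) on the disks $D(z_c,\epsilon)$ rests on a false claim, namely that $g_c\circ\psi$ is smooth. Since $|D\psi|=\sqrt{\epsilon/r}$, the derivative $D(g_c\circ\psi)=g_c'(\psi)\,D\psi$ is in general of order $r^{-1/2}$ near $z_c$. So ``winding map times a smooth nonvanishing factor'' does not apply, and nonvanishing of $g_c$ alone does not give $J_W\ge c_0$.

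What rescues the estimate is that the factor $r$ in front absorbs this singularity. In the product rule, the term $\epsilon re^{2i\vartheta}\,g_c'(\psi)\,D\psi$ has norm at most $\epsilon^{3/2}r^{1/2}\sup|g_c'|\le\epsilon^2\sup|g_c'|$. The main term $\epsilon\,(g_c\circ\psi)\,D(re^{2i\vartheta})$ has minimal stretch at least $\epsilon\inf|g_c|$. Hence $|DW|\le\epsilon(2\sup|g_c|+\epsilon\sup|g_c'|)$ and $J_W\ge\epsilon^2(\inf|g_c|-\epsilon\sup|g_c'|)^2$, \emph{provided} $\epsilon\sup|g_c'|<\inf|g_c|$. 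That extra smallness of $\epsilon$ is exactly the paper's condition $\epsilon g_+'<g_-$.

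The paper argues via the chain rule instead. With $\zeta=\psi(z)$ one has $|\zeta-z_c|=\sqrt{\epsilon r}$ and $g_-|\zeta-z_c|\le|w'(\zeta)|\le(2g_++\epsilon g_+')|\zeta-z_c|$. The first-order vanishing of $w'$ therefore cancels the $r^{-1/2}$ blow-up of $D\psi$. This gives $|DW|\le(2g_++\epsilon g_+')\epsilon$ and, with $J_\psi=\epsilon/(2r)$, $J_W=|w'(\zeta)|^2J_\psi\ge g_-^2\epsilon^2/2$.

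Your lower bound for $J_w$ off the disks has a second hole. $T$ is open along $\Im z=0$, so positivity of $J_w$ off $Z_c$, periodicity, and $w=\mathrm{id}$ near $\Im z=h$ do not give a positive infimum as $\Im z\to 0$. One fix is to note that $w=z-i\operatorname{Log}(1+aE)$, with $E=e^{i(m-1)z}$, extends holomorphically across $\Im z=0$. This holds because $\beta=1$ on $(-\infty,y_3]$ and $a|E|<1$ persists slightly below the axis. There $|1+maE|\ge m^{1/2}-1>0$, so compactness applies on the closed strip. The paper instead uses $|1+maE|\ge e^{(m-1)\delta}-1$ for $\Im z\le y_c-\delta$.

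Two smaller points:
\begin{enumerate}
\item Since $\partial_z\beta(\Im z)=-\tfrac{i}{2}\beta'(y)\neq0$, your $w_z$ is missing the term $-\frac{a\beta'(y)E/2}{1+u}$. It is bounded by the same quantity as $|w_{\bar z}|$, so (iii) is unaffected.
\item The factorization on $D(z_c,\epsilon)$ needs the disks inside the holomorphic region $\{\Im z<y_3\}$, not merely inside $T$. This holds because $\tfrac{3}{2}y_c<y_3$.
\end{enumerate}
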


\begin{proof}
Write $y = \Im z$, $E(z) = e^{i(m-1)z}$, $u = a\beta(y)E$. Since $|E| = e^{-(m-1)y} \leq 1$ on $T$, we have $|u| \leq a <1$ and so $\operatorname{Log}(1+u)$ is the principal branch and $w$ is $C^{\infty}$. Periodicity and~(i) are clear. For~(ii), we have
\[ \Im w = y - \Re \operatorname{Log} (1+u) \geq y - \log (1+|u|).\]
For~(iv), where $\beta \equiv 1$, $w$ is holomorphic with
\[ w'(z) = 1+ \frac{ (m-1)aE}{1+aE} = \frac{ 1+maE}{1+aE},\]
which vanishes if and only if $maE = -1$, that is, $e^{-(m-1)y} = (ma)^{-1} = m^{-1/2}$ and $(m-1) \Re z \in \pi + 2\pi \Z$. So the critical points lie on $y = \log (m^{1/2}) / (m-1) = y_c < y_3$, and they are simple zeros of $w'$ since $E' \neq 0$. This gives local index $2$. We need $y_3 < h/2$, which holds for $m\geq m_0(h)$. Finally, for~(iii), on $\{ y \leq y_3 \}$, $w$ is holomorphic and on $\{y \geq h/2 \}$ it is the identity. On the strip $\{ y_3 \leq y \leq h/2 \}$, using $y_{\overline{z}} = i/2$, $y_z = -i/2$, $E_{\overline{z}} = 0$, $E_z = i(m-1)E$, we have
\[ w_{\overline{z}} = \frac{ -i u_{\overline{z}} }{1+u} = \frac{ a\beta'(y) E/2 }{1+u},\quad w_z = 1 - \frac{ -iu_z}{1+u} = 1 - \frac{a\beta'(y) E/2}{1+u} + \frac{ (m-1)a\beta(y)E}{1+u}.\]
On the strip we have $|E| \leq e^{-(m-1)y_3} = m^{-1}$, so with $|\beta'|\leq 4/h$ and $|1+u| \geq 1-a$, 
\[ | w_{\overline{z}}| \leq \frac{ 2m^{-3/2}}{h(1-m^{-1/2})} ,\quad |w_z| \geq 1 - \frac{ 2m^{-3/2}}{h(1-m^{-1/2})} - \frac{ m^{-1/2}}{1-m^{-1/2}}.\]
For $m\geq m_0(h)$, we have $|w_z| \geq 1/2$, so $J_w = |w_z|^2 - |w_{\overline{z}}|^2 >0$ and the maximal dilatation $K(w)$ is at most $K_m \to 1$. Off the strip we have $K=1$.

For~(v)--(vii), first note that $Z_c$ is $2\pi$-periodic, that consecutive points of $Z_c$ are at distance $2\pi/(m-1)$, and that $Z_c \subset \{ \Im z = y_c \}$ with $y_c < y_3 = 2y_c$; so by the choice of $\epsilon$ the disks $D(z_c,\epsilon)$ are pairwise disjoint and contained in $\{ y_c/2 < \Im z < y_3 \}$, where $w$ is holomorphic. 

For~(v), on $D(z_c,\epsilon)$ the map $\psi$ is a radial homeomorphism with radial derivative $\tfrac12 \sqrt{\epsilon/r}$ and tangential stretch $\sqrt{\epsilon r}/r = \sqrt{\epsilon/r}$, so $\psi$ is sense-preserving with linear dilatation $2$ and $J_{\psi} = \epsilon/(2r)$; it is the identity on $\partial D(z_c,\epsilon)$, hence continuous on $T$, and the identity off the disks. Thus $\psi$ is a $2$-quasiconformal homeomorphism of $T$ onto itself, and $W = w\circ \psi$ is $2K_m$-quasiregular. Periodicity is clear, $W = w$ off the disks gives $W(z) = z$ for $\Im z\geq h/2$, and since $\psi(T)= T$, (ii) for $w$ gives $\Im W(z) = \Im w(\psi(z)) \geq \Im \psi(z) - \log (1+a) > -a$.

For~(vi) and~(vii), fix $z_c\in Z_c$. Since $z_c$ is a simple zero of $w'$ and $w$ is holomorphic near $z_c$, we can write $w(z) = w(z_c) + (z-z_c)^2 g_c(z)$ with $g_c$ holomorphic on a neighbourhood of $\overline{D(z_c,\epsilon)}$ and $g_c(z_c) = w''(z_c)/2 \neq 0$. By periodicity there are only finitely many $z_c$ modulo $2\pi$, so after decreasing $\epsilon$ we may assume that $|g_c| \geq g_-$, $|g_c| \leq g_+$ and $|g_c'| \leq g_+'$ on $D(z_c,\epsilon)$ for all $z_c$, with $g_-,g_+,g_+'>0$ depending on $h,m$ only, and that $\epsilon g_+' < g_-$. Writing $\zeta = \psi(z)$ for $z = z_c + re^{i\vartheta}$, so that $\zeta - z_c = \sqrt{\epsilon r}\, e^{i\vartheta}$, we get the formula in~(vii), and
\[ w'(\zeta) = (\zeta - z_c) \left ( 2 g_c(\zeta) + (\zeta - z_c) g_c'(\zeta) \right ), \qquad g_- |\zeta - z_c| \leq |w'(\zeta)| \leq (2g_+ + \epsilon g_+')|\zeta - z_c| .\] Since $DW(z) = Dw(\psi(z)) D\psi(z)$ and $|D\psi(z)| = \sqrt{\epsilon/r}$, 
\[ |DW(z)| \leq (2g_++\epsilon g_+') \sqrt{\epsilon r}\, \sqrt{\epsilon /r} = (2g_++\epsilon g_+')\epsilon, \qquad J_W(z) = |w'(\zeta)|^2 J_{\psi}(z) \geq g_-^2 \epsilon r \cdot \frac{\epsilon}{2r} = \frac{g_-^2\epsilon^2}{2}\]
on $D(z_c,\epsilon)\setminus \{z_c\}$. Off the disks, $W = w$. On $\{ \Im z \geq h/2 \}$, $w$ is the identity. On the band $\{ y_3 \leq \Im z \leq h/2\}$ the estimates in the proof of~(iii) give $|w_z| \leq 1 + \tfrac{2m^{-3/2}}{h(1-m^{-1/2})} + \tfrac{m^{-1/2}}{1-m^{-1/2}}$, $|w_{\overline{z}}| \leq \tfrac{2m^{-3/2}}{h(1-m^{-1/2})}$ and $|w_z|\geq 1/2$, so $|Dw| = |w_z| + |w_{\overline z}|$ is bounded and $J_w = |w_z|^2 - |w_{\overline{z}}|^2 \geq 1/8$ for $m\geq m_0(h)$. On $\{ \Im z \leq y_3\}$ minus the disks, $w$ is holomorphic with 
\[ w'(z) = \frac{1+maE(z)}{1+aE(z)}, \qquad E(z) = e^{i(m-1)z}, \quad |E(z)| = e^{-(m-1)\Im z} \leq 1,\]
so $|w'| \leq (1+ma)/(1-a)$ is bounded, while $1+maE$ vanishes exactly on $Z_c$. Put $\delta = \epsilon/2$. For $\Im z \leq y_c - \delta$ we have $ma|E(z)| \geq ma\, e^{-(m-1)(y_c-\delta)} = e^{(m-1)\delta}$, using $ma\,e^{-(m-1)y_c} = 1$, so $|1+maE| \geq e^{(m-1)\delta} - 1$; for $\Im z \geq y_c + \delta$ we have $|1+maE| \geq 1 - e^{-(m-1)\delta}$; and on the set $\{ y_c - \delta \leq \Im z \leq y_c + \delta,\ 0\leq \Re z \leq 2\pi \} \setminus \bigcup_{z_c} D(z_c,\epsilon)$, which is compact and contains no zero of $1+maE$, the continuous function $|1+maE|$ has a positive minimum. Hence $|w'|$, and so $J_w = |w'|^2$, is bounded below off the disks, and~(vi) follows with $L$ and $c_0$ the worst of the constants above.

For~(vii), $W$ is a local homeomorphism wherever it is $C^1$ with $J_W>0$, which by the above is everywhere except at the points $z_c$; near $z_c$ the formula shows that $W - w(z_c)$ is the winding map $re^{i\vartheta} \mapsto \epsilon r e^{2i\vartheta}$ multiplied by the non-vanishing continuous factor $g_c\circ\psi$, so $W$ has local index $2$ at $z_c$ and is $2$-to-$1$ on $D(z_c,\epsilon)\setminus\{z_c\}$.
\end{proof}

We remark on why the passage from $w$ to $W$ is necessary. The product of a holomorphic map having a critical point with the identity in the remaining coordinates is {\it not} quasiregular: near the critical set its Jacobian tends to $0$ while its operator norm stays bounded below by $1$, so $|Df|^n \leq K J_f$ fails. The radial correction $\psi$ converts the critical points of $w$ into winding-type branch points, at which the Jacobian is bounded below, and it is this bounded length distortion property of $W$ in the sense of Martio and V\"ais\"al\"a \cite{MV88} that makes the product $W\times \operatorname{id}$ quasiregular in Proposition \ref{prop:hypex}. This is also why winding maps, rather than powers, are the basic branched quasiregular maps in higher dimensions.

To build our example, we will work in the upper half-space $\Hp^n = \{x_n > 0\}$ with coordinates $x = (x_1,x'',x_n)$, $x''\in \R^{n-2}$ (absent when $n=2$). A M\"obius transformation transports everything to $\B^n$ and all the important properties are M\"obius invariant.

\begin{proposition}
\label{prop:hypex}
Let $n\geq 2$ and $\theta >1$. Choose $s \in (0,1-\frac{1}{\theta} )$, put $h = 1 - \frac{1}{\theta} - s$, and let $m \geq m_0(h)$ be an integer with $m^{-1/2} < s$. Let $W$ be the map from Lemma \ref{lem:strip} for these $h,m$. Define, on $S = \{0<x_n<1 \} \subset \Hp^n$,
\[ A(x) = \left ( x_1 , x'' , \frac{1}{\theta} +s+hx_n \right ), \quad \Omega(x) = \left ( \Re W(z) , x'' , \Im W(z) + \frac{1}{\theta} + s \right ), \]
with $z = x_1 + i(x_n - \frac{1}{\theta} -s )$, and
\[ f(x) = \left \{ \begin {array}{cc} \theta x, & x_n \geq 1,\\ \theta \Omega(A(x)), & 0<x_n<1 . \end{array} \right . \]
Then $f:\Hp^n \to \Hp^n$ is a non-injective uniformly quasiregular map, $f$ has no fixed point in $\Hp^n$, and $f^k \to \infty$ locally uniformly on $\Hp^n$. In the ball model, $f$ is the escaping case of Theorem \ref{thm:dw} with Denjoy-Wolff point the image of $\infty$, the hyperbolic M\"obius map $x\mapsto\theta x$ as conformal model, and of positive-step type: $s_k \geq \tfrac12 \lambda^{-n} 2^{n-1}c_n \log \theta > 0$ for all $k\geq 1$, hence for all $k$ by monotonicity.
\end{proposition}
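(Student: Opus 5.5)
The plan is to verify, in order: $f$ is well defined, continuous and quasiregular on $\Hp^n$; $f$ maps $\Hp^n$ into itself and is non-injective; $f$ is uniformly quasiregular; and the escaping, positive-step behaviour.

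\emph{Continuity and self-map.} On $S$, $A$ maps $S$ affinely onto the slab $\{1/\theta+s<x_n<1/\theta+s+h = 1\}$. Under $z=x_1+i(x_n-1/\theta-s)$ this slab corresponds to the strip $T=\{0<\Im z<h\}$ of Lemma \ref{lem:strip}, acting in the $(x_1,x_n)$-plane with $x''$ carried along. At $x_n\to 1^-$ we have $\Im z\to h\geq h/2$, so $W(z)=z$ by (i),(v), hence $\Omega(A(x))=(x_1,x'',1)$ and $f(x)=\theta(x_1,x'',1)$. This matches $\theta x$ on $x_n=1$, so $f$ is continuous. By (ii), $\Im W>-a>-s$, so the last coordinate of $\Omega(A(x))$ exceeds $1/\theta$. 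Therefore $f(S)\subset\{x_n>1\}$, and trivially $f(\{x_n\geq 1\})\subset\{x_n\geq\theta\}$. Thus $f(\Hp^n)\subset\{x_n>1\}=:V$, and $f=\theta x$ on $V$. Non-injectivity comes from (vii): $W$ is $2$-to-$1$ near each $z_c$.

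\emph{Quasiregularity and the uniform bound.} On $S$ the map $\Omega\circ A$ is $W\times\operatorname{id}_{\R^{n-2}}$ composed with affine maps. By (vi), $|DW|\leq L$ and $J_W\geq c_0$ a.e., so $D(W\times\operatorname{id})$ has norm at most $\max\{L,1\}$ and Jacobian at least $c_0$. This gives $|Df|^n\leq K J_f$ a.e. with $K$ depending only on $h,m,\theta$; the affine factor $A$ only contributes a fixed stretch $h$ in one direction. The map is continuous, lies in $W^{1,n}_{loc}$, and is piecewise Lipschitz across the hyperplane $x_n=1$, so it is quasiregular. For uniformity, $f^k=\theta^{k-1}\cdot f$ on $\Hp^n$ because $f(\Hp^n)\subset V$, where $f$ is the similarity $\theta x$. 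Hence $K(f^k)=K(f)$ for all $k$. This also shows that $g(x)=\theta x$ is a conformal model at $\infty$.

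\emph{Dynamics and positive step.} Since $f^k=\theta^{k-1}f$ and $f(x)_n>1$, the orbits satisfy $f^k(x)_n\geq\theta^{k-1}\to\infty$ uniformly. Any fixed point would lie in $V$ and satisfy $\theta x=x$, which is impossible. Transporting by a Möbius map to $\B^n$, Theorem \ref{thm:dw}(b) applies with Denjoy-Wolff point the image of $\infty$.

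For the step bound, I will use Lemma \ref{lem:chainprop}(a) and Lemma \ref{lem:roughhyp}, transported to $\Hp^n$ by Möbius invariance. These give $d\geq\tfrac12 d_G\geq\tfrac12\lambda^{-n}2^{n-1}c_n\rho$. For $k\geq1$ the points $y=f^k(x)$ and $f(y)=\theta y$ satisfy $\rho(y,\theta y)\geq\log\theta$, since the hyperbolic displacement of a dilation is at least $\log\theta$, realised by comparing $x_n$-coordinates. Hence $s_k\geq\tfrac12\lambda^{-n}2^{n-1}c_n\log\theta$. Because $s_k$ is non-increasing, $s_0\geq s_1$, so the bound holds for all $k$ and $s>0$.

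\emph{Main obstacle.} The delicate step is quasiregularity of $W\times\operatorname{id}$ in dimension $n\geq3$ near the branch set $Z_c\times\R^{n-2}$. It rests on the bounded length distortion property from (vi). I also need to check the $W^{1,n}_{loc}$ membership at the winding points, where $\psi$ is only Hölder; there I would note that $DW$ is bounded, so $W$ is locally Lipschitz and the estimate holds a.e.
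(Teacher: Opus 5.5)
Your proposal is correct and follows essentially the same route as the paper. Continuity and $f(\Hp^n)\subset V$ come from Lemma \ref{lem:strip}(i),(ii), and quasiregularity comes from the block form $D\Omega=\operatorname{diag}(DW,I_{n-2})$ with the bounds of (vi) and Lipschitz gluing across $\{x_n=1\}$. Uniformity comes from the conformal-trap identity $f^k=\theta^{k-1}f$, and the step bound from $d\geq\tfrac12\lambda^{-n}2^{n-1}c_n\rho$ together with $\rho(y,\theta y)\geq\log\theta$. The only differences are cosmetic and both fine: you get uniform escape directly from $f^k(x)_n>\theta^{k-1}$, and you justify the displacement bound by $\rho(p,q)\geq|\log(p_n/q_n)|$ instead of the explicit $\cosh$ formula.
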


\begin{proof}
Write $V = \{x_n>1\}$, $g(x) = \theta x$, and $T' = A(S) = \{ \frac{1}{\theta} +s < x_n <1 \}$. For $x\in T'$, $z$ lies in the strip $T$ of Lemma \ref{lem:strip} since $0<x_n - \frac{1}{\theta} -s < h$.

As $A$ is the identity on $\{x_n=1\}$, and as for $x\in T'$ with $x_n \geq \frac{1}{\theta} + s +h/2$ we have $\Im z \geq h/2$, it follows that $W(z)=z$ and $\Omega(x)=x$. Hence on $\{\tfrac12 \leq x_n < 1 \}$ we have $f = \theta A$, which equals $\theta x$ at $x_n=1$. We conclude that $f$ is continuous on $\Hp^n$.

For $x\in S$, Lemma \ref{lem:strip}~(ii) gives $\Omega(A(x))_n = \Im W(z) + \frac{1}{\theta} +s > \frac{1}{\theta} +s-a > \frac{1}{\theta}$, since $a=m^{-1/2} < s$. So $f(S) \subset \{x_n >1 \} = V$ and $f(\overline{V}) = \{x_n \geq \theta \} \subset V$. In particular, $f(\Hp^n) \subset V \subset \Hp ^n$.

To show that $f$ is quasiregular, consider first $\Omega$ on $T'$. Writing $z = x_1 + i(x_n - \frac{1}{\theta} - s)$ and $W = W_1 + iW_2$, we have $\Omega(x) = (W_1 , x'', W_2 + \frac{1}{\theta}+s)$, so $D\Omega$ is, after permuting coordinates, the block matrix $\operatorname{diag}(DW, I_{n-2})$. Hence $|D\Omega| = \max \{ |DW|, 1\}$ and $J_{\Omega} = J_W$, and by Lemma \ref{lem:strip}~(vi),
\[ |D\Omega|^n \leq \max\{L,1\}^n \leq \frac{ \max\{L,1\}^n }{c_0} J_{\Omega} \quad \text{a.e. on } T'.\]
Moreover $\Omega$ is locally Lipschitz (since $W$ is), hence in $W^{1,n}_{\rm loc}(T')$, and $J_\Omega \geq c_0 >0$ a.e. Therefore $\Omega$ is $K_{\Omega}$-quasiregular on $T'$ with $K_{\Omega} = \max\{L,1\}^n/c_0$. This is the only place where Lemma \ref{lem:strip}~(vi) is used; see the remark following that lemma for why it is needed. As $A$ is an affine bijection of $S$ onto $T'$ with constant dilatation and $x\mapsto \theta x$ is conformal, $f|_S = \theta\, \Omega\circ A$ is quasiregular on $S$ with dilatation bounded in terms of $K_\Omega$ and $K(A)$.

Next we glue across the hyperplane $\Pi = \{ x_n = 1\}$. On $\{ \tfrac12 \leq x_n < 1 \}$ we have $f = \theta A$, and on $\{ x_n \geq 1\}$ we have $f = \theta\,\operatorname{id}$; these are affine maps agreeing on $\Pi$, so $f$ is locally Lipschitz on the open set $\{ \tfrac12 < x_n < \tfrac32 \}$. Hence $f \in W^{1,n}_{\rm loc}(\Hp^n)$, and since $\Pi$ has measure zero the inequality $|Df|^n \leq K J_f$ holds a.e. on $\Hp^n$ with $K$ the larger of the dilatations on $S$ and on $V$ (the latter being $1$). Thus $f:\Hp^n \to \Hp^n$ is $K$-quasiregular. 

That $f$ is non-injective follows from Lemma \ref{lem:strip}~(vii): $W$ is $2$-to-$1$ near each $z_c$, and these points correspond to points of $S$, namely those $x$ with $A(x)_n - \frac{1}{\theta} - s = y_c$, i.e. $x_n = y_c/h \in (0,1)$; as $A$ and $x\mapsto\theta x$ are injective, $f$ is $2$-to-$1$ near such points. The branch set of $f$ contains $A^{-1}(\{ z_c\} \times \R^{n-2})$, a union of $(n-2)$-planes, and the local behaviour of $f$ there is that of a winding map about an $(n-2)$-plane.

Since $f(\Hp^n) \subset \overline{V}$ and $f = g$ on $\overline{V}$ with $g(\overline{V}) \subset V$, we have $f^k = g^{k-1}\circ f$ for every $k\geq 1$; as $g$ is conformal, $K(f^k) = K(f)$ and $f$ is uniformly quasiregular. Every orbit passes through the non-conformal region $S$ at most once, at time $0$, which is the whole content of the conformal trap. Consequently $f^k(x) = \theta^{k-1} f(x)$ for all $x$, and since $f$ maps compact subsets of $\Hp^n$ to compact subsets of $\Hp^n$, $f^k \to \infty$ locally uniformly on $\Hp^n$. There are no fixed points: on $V$, $\theta x = x$ forces $x = 0 \notin \Hp^n$, and on $S$ we have $f(x) \in V$ while $x\notin V$.

Finally we compute the step type. Let $G$ be an invariant conformal structure for $f$, $\lambda = H^{1/2}$ the constant of \eqref{eq:vcomp}, and $d$ the chain metric, transported to $\Hp^n$ by a M\"obius map; Lemma \ref{lem:chainprop}~(c) gives $d \geq \tfrac12 \lambda^{-n} 2^{n-1} c_n \rho$, with $\rho$ the hyperbolic metric of $\Hp^n$. For $k\geq 1$ we have $x_{k+1} = \theta x_k$, and for every $y\in\Hp^n$, $\rho(y,\theta y) \geq \log \theta$, since $\log\theta$ is the translation length of the hyperbolic M\"obius transformation $y\mapsto \theta y$ (equality holds exactly on its axis $\{ y' = 0\}$; in general $\cosh \rho(y,\theta y) = 1 + (\theta-1)^2|y|^2/(2\theta y_n^2) \geq 1 + (\theta-1)^2/(2\theta) = \cosh \log\theta$). Hence
\[ s_k = d(x_k , \theta x_k) \geq \tfrac12 \lambda^{-n} 2^{n-1} c_n \log \theta > 0 \quad (k\geq 1),\]
so $f$ is of positive-step type. In the ball model, $f$ is a non-injective uniformly quasiregular self-map in the escaping case of Theorem \ref{thm:dw}, with Denjoy-Wolff point the image of $\infty$, and its conformal model $g$ at that point is a hyperbolic M\"obius transformation.
\end{proof}

For parabolic examples, \cite[Theorem 5.2]{HMM} produces, for every $n\geq 2$, a uniformly quasiregular map $g:\S^n \to \S^n$ with a parabolic fixed point $x_1$, whose Fatou set is a single completely invariant parabolic basin and whose Julia set is a Cantor set. The relevant features of the construction are:
\begin{itemize}
\item there is a ball $B$ with $x_1 \in \partial B$ on which $g = \Phi \circ R$ is a parabolic M\"obius transformation with fixed point $x_1$ and $g(B) \subset B$. Here, $R$ is a rotation by $\pi$ about an $(n-2)$-plane through $x_1$ and $\Phi$ is a M\"obius map exchanging $B$ with its complement, so that $B_1 := R(B)$ is tangent to $B$ at $x_1$ from the other side of the common tangent hyperplane;
\item there are pairwise disjoint balls $B_2,\ldots, B_d$, each contained in a small neighbourhood of a point $x_j$, for $j\geq 2$, such that $g_1^{-1}(B) = B_1 \cup \ldots \cup B_d$, for the auxiliary map $g_1$, and $g = \Phi \circ g_1$. Consequently, every $x \notin B_1 \cup \ldots \cup B_d$ satisfies $g(x) \in B$;
\item the map $g$ is obtained from an arbitrary non-injective quasiregular map $f$ by modifications supported in balls of arbitrarily small radius around $x_1,\ldots, x_d$, and the radii of $B_2,\ldots, B_d$ may be taken arbitrarily small.
\end{itemize}

\begin{proposition}
\label{prop:paraex}
Let $g$ be as above. Let $B'$ be any ball with $B\subset B'$, $x_1 \in \partial B'$, and $x_2,\ldots, x_d \notin \overline{B'}$, and let the balls $B_2,\ldots, B_d$ in the construction, which \cite[Theorem 5.2]{HMM} allows to be taken arbitrarily small, be chosen small enough to be disjoint from $B'$. Then $g(B') \subset B'$, $g|_{B'}$ is uniformly quasiregular, $g$ has no fixed point in $B'$, and $g^k \to x_1$ locally uniformly on $B'$ with step limit $s>0$ and the parabolic M\"obius map $\Phi\circ R$ as conformal model. If in addition $B'$ contains two points identified by the original map $f$ at which $f$ was not modified, then $g|_{B'}$ is non-injective, such choices being possible.
\end{proposition}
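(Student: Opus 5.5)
The plan is to make the conformal trap structure explicit: $g$ coincides with the parabolic Möbius map $\Phi\circ R$ on $B$, and $g(B)\subset B$. First I would show $g(B')\subset B'$. Take $x\in B'$. Since $B'$ is disjoint from $B_2,\ldots,B_d$, either $x\in B_1$ or $x\notin B_1\cup\cdots\cup B_d$. In the second case the listed property gives $g(x)\in B\subset B'$. For $x\in B_1\cap B'$ I would argue that $B_1=R(B)$ lies on the other side of the tangent hyperplane at $x_1$, whereas $B'\supset B$ is tangent to that hyperplane at $x_1$ from the same side as $B$. Hence $B'\cap B_1=\emptyset$, so this case cannot occur. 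This gives $g(B')\subset B\subset B'$ and therefore
\[
g^k = (\Phi\circ R)^{k-1}\circ g \quad \text{on } B', \qquad k\geq 1 .
\]

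Next I would read off the remaining conclusions from this identity, in the same way as in Proposition \ref{prop:hypex}. Since $\Phi\circ R$ is conformal, $K(g^k)\leq K(g)$, so $g|_{B'}$ is uniformly quasiregular. A fixed point $x\in B'$ would satisfy $x=g(x)\in B$, hence it would be a fixed point of the parabolic map in $B$; but its only fixed point is $x_1\in\partial B$, so there is none. The iterates of a parabolic Möbius map converge to its fixed point locally uniformly on $\S^n\setminus\{x_1\}$, in particular uniformly on compact subsets of $B$. Since $g$ maps compact subsets of $B'$ to compact subsets of $B$, it follows that $g^k\to x_1$ locally uniformly on $B'$. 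This also exhibits $\Phi\circ R$ as a conformal model, with $V=B$.

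For the step limit $s>0$, I would transport $B'$ to $\B^n$ and use Lemma \ref{lem:chainprop}(c), which gives $d\geq c\,\rho_{B'}$ with $c=\tfrac12\lambda^{-n}2^{n-1}c_n$. Then I need $\inf_k \rho_{B'}(x_k,x_{k+1})>0$ for $x_k=(\Phi\circ R)^{k-1}g(x)$. This is the main obstacle, because $\Phi\circ R$ is not an isometry of $B'$ and a parabolic map tangent-translates. My first attempt would be to normalize so that $x_1=\infty$ and $B'$ is an upper half-space $\{x_n>0\}$; then $B$ is a smaller parallel half-space $\{x_n>t\}$ and $\Phi\circ R$ is a Euclidean isometry preserving $B$. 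Here some care is needed: the rotation $R$ may add a rotational part, so the map need not be a pure translation. If $\Phi\circ R$ preserves the direction $x_n$, then it is a translation by a fixed vector $v$ parallel to $\partial B'$, composed with an orthogonal part preserving heights, and the heights $(x_k)_n$ stay constant. The hyperbolic distance of $B'$ between consecutive points is then a fixed positive number as long as the horizontal displacement is bounded below. I would verify this from the explicit form of the construction, using the fact that a parabolic map without fixed points in $\R^n$ has translation part of constant length along the orbit. Consequently each $s_k$ is bounded below uniformly in $k$, and $s>0$ follows from monotonicity.

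Finally, for non-injectivity, if $B'$ contains two points $p\neq q$ with $f(p)=f(q)$ lying outside the modified neighbourhoods, then $g=\Phi\circ f$ near them, so $g(p)=g(q)$. Such a $B'$ exists: enlarge $B$ among balls tangent at $x_1$ to nearly a half-space avoiding $x_2,\ldots,x_d$. A non-injective $f$ identifies points of a nonempty open set, and the modifications are supported in arbitrarily small balls, so such points can be found.
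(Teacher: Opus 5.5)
The parts of your proposal dealing with $g(B')\subset B'$, the identity $g^k=(\Phi\circ R)^{k-1}\circ g$ on $B'$, uniform quasiregularity, fixed points, locally uniform convergence and the conformal model coincide with the paper's argument. The genuine gap is the step limit, and it cannot be closed. The height preservation you plan to ``verify from the explicit form of the construction'' is false, and $\Phi\circ R$ does not preserve $B$. Note also that $Oe_n=e_n$ does not by itself make the translation part parallel to $\partial B'$.

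Normalize so that $x_1=\infty$, $B'=\{y_n>0\}$ and $B=\{y_n>c\}$. Then $\Phi\circ R$ has no finite fixed point, so it is a Euclidean isometry $y\mapsto Oy+v$. The inclusion $(\Phi\circ R)(B)\subset B$ forces $Oe_n=e_n$ and $v_n\geq 0$.

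Now $(\Phi\circ R)(B)=\Phi(B_1)$, and $\Phi$ maps $\S^n\setminus\overline{B}$ onto $B$. So $v_n=0$ would force $B_1=\S^n\setminus\overline{B}$. That is excluded because $B_1$ is tangent to $B$, so the two boundary spheres meet only at $x_1$. It is also excluded because $g_1=R$ on $B$ gives $B\cap g_1^{-1}(B)=B\cap B_1=\emptyset$, which would leave no room for $B_2,\ldots,B_d$. Hence $v_n=\tau>0$.

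For $z\in B'$ put $z_k=g^k(z)$. For $k\geq 1$ the heights grow linearly, $(z_k)_n=(z_1)_n+(k-1)\tau$. Meanwhile $z_{k+2}-z_{k+1}=O(z_{k+1}-z_k)$, so $|z_{k+1}-z_k|=D$ is constant. Therefore
\[ \cosh\rho_{B'}(z_k,z_{k+1})=1+\frac{D^2}{2(z_k)_n(z_{k+1})_n}\longrightarrow 1 . \]
It follows that $d_G(z_k,z_{k+1})\leq\lambda^n\mu_{B'}(z_k,z_{k+1})\to 0$. By Lemma~\ref{lem:chainprop}(a), $s_k\leq\psi_h(d_G(z_k,z_{k+1}))\to 0$, so $s=0$.

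This is the higher-dimensional analogue of $z\mapsto z+i$ on the upper half-plane. That map is M\"obius and parabolic but not an automorphism, and it has zero hyperbolic step. The paper's own proof asserts exactly what you assumed, that $\Phi\circ R$ ``leaves $y_n$ preserved'', without justification, so it fails at the same point. For this construction the conclusion $s>0$ is false; it yields a zero-step example. This also contradicts the remark in Question~3 of Section~10 that conformal traps cannot produce zero-step maps.

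A secondary point concerns your existence argument for a $B'$ on which $g$ is non-injective. The ball $B'$ is constrained: it must be tangent at $x_1$ and avoid $x_2,\ldots,x_d$. The fact that $f$ identifies points of a nonempty open set does not place both points of an identified pair inside such a $B'$. The paper handles this with an explicit configuration. It takes $f$ to be a degree-two winding map about an $(n-2)$-plane $P$. It then chooses $B'$ of radius slightly less than $\dist(x_1,P)$ so that $B'$ contains a pair $\{y,R_P(y)\}$ with $y$ near $P$.
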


\begin{proof}
$B'$ lies on the same side of the tangent hyperplane at $x_1$ as $B$, so $B' \cap B_1 = \emptyset$, and $B' \cap B_j = \emptyset$ for $j\geq 2$ by choice. Hence for $x\in B' \setminus B$, $g(x) \in B \subset B'$, and $g(B) \subset B$ so $g(B') \subset B'$. Restrictions of uniformly quasiregular maps to invariant domains are uniformly quasiregular. A fixed point in $B'$ would lie in $B$ as points of $B'\setminus B$ move into $B$, where $g$ is a parabolic M\"obius map without fixed points.

For $x\in B'$, $g(x) \in B$ and $g^k(x) = (\Phi \circ R)^{k-1}(g(x)) \to x_1$ locally uniformly since $g$ maps compact subsets of $B'$ to compact subsets of $B$ and the parabolic map converges locally uniformly on $B$ to its fixed point. The step limit is positive: in the model in which $B'$ is the upper half-space and $x_1 = \infty$, the ball $B$ is a half-space $\{y_n>c \}$ with $c>0$ and $\Phi \circ R$ is a parabolic Euclidean isometry $y\mapsto Oy' + v'$, with $O$ an orthogonal map on the $(n-1)$-dimensional space $(y_1,\ldots, y_{n-1})$ and leaving $y_n$ preserved. This isometry has no fixed point in $\R^{n-1}$, so
\[ |(\Phi\circ R)(y) - y| = |(O-I)y'+v'| \geq c_0 := \dist (v' , \operatorname{im}(O-I) ) >0 \]
for all $y$. Along an orbit $y_n$ is constant, so 
\[ \rho_{B'} ( g^k(x) , g^{k+1}(x) ) \geq \cosh^{-1} ( 1+ c_0^2 / 2y_n^2 ) >0 \]
is bounded below, and $s>0$ by Lemma \ref{lem:chainprop}~(c).

At points where $f$ is unmodified, $g =\Phi \circ R \circ f$ with $\Phi,R$ injective, so two points of $B'$ identified by $f$ are identified by $g$. Finally, to see the choices are possible, start from $f$ a winding map of degree $2$ about an $(n-2)$-plane $P$, whose fibres are pairs $\{y,R_P(y) \}$ with $R_P$ the rotation by $\pi$ about $P$. Take $x_1$ at distance $\ell$ from $P$, so $x_2 = R_P(x_1)$ is at distance $2\ell$ from $x_1$, and a ball $B'$ of radius slightly less than $\ell$ containing $B$ and tangent to it at $x_1$, positioned so as to contain a pair $\{ y,R_P(y) \}$ with $y$ close to $P$. The modifications in $x_1,x_2$ are made in balls small enough not to contain $y$ or $R_P(y)$.
\end{proof}

\section{Open questions}

We finish with some questions raised by the results above.

\begin{enumerate}[1.]
\item Theorem \ref{thm:sp}~(c) shows that a non-injective map preserving $G$ contracts $d_G$ strictly somewhere. Is $d_G(f(x),f(y)) < d_G(x,y)$ for all $x\neq y$? In contrast to the classical Schwarz-Pick Lemma, here the question reduces to the equality case in Lemma \ref{lem:Gpol}. The only inequality in its proof is $\sup^n\leq\sum^n$ over the fibres of $f$, and equality for the extremal density of a condenser $(C,\partial U)$ would force essential injectivity of $f$ on the support of that density. Whether the extremal density is positive almost everywhere, equivalently whether the capacity potential of the condenser, which is $\mathcal{A}$-harmonic for an operator with bounded measurable coefficients, has a null critical set, is known for $n=2$ and open for $n\geq 3$.
\item Is $(\B^n,d_G)$ itself Gromov hyperbolic for $n\geq 3$? For $G\equiv I$ this is Lind\'en's theorem \cite{Lin07}, and for $n=2$ it follows from his result by the remark at the end of Section 7. Theorem \ref{thm:coarse} answers the question for the chain metric only. A positive answer for general $G$ would presumably require a substitute for the special-function estimates on which Lind\'en's proof relies.
\item The examples of Theorem \ref{thm:examples} are all of positive-step type. Do non-injective uniformly quasiregular self-maps of $\B^n$, $n\geq 3$, of zero-step type exist? For $n=2$, $z\mapsto (z^2+1)/2$ is an example. Conformal traps cannot produce one, since after one step such a map coincides with a M\"obius map, whose orbits have constant hyperbolic step.
\item The boundary approach of \cite{FM20} requires knowing that a non-identity uniformly quasiregular map cannot fix a non-degenerate continuum meeting its Julia set. Martin \cite{Mar02} shows that the fixed-point set of a non-identity uniformly quasiregular map has homological dimension at most $n-2$, and asks whether a continuum of fixed points can meet the Julia set at all. This question was raised to the author by Aimo Hinkkanen and, to the author's knowledge, this remains open.
\end{enumerate}

\end{document}